\newtheorem{theorem}{Theorem}
\newtheorem{lemma}[theorem]{Lemma}
\newtheorem{remark}[theorem]{Remark}
\newtheorem{corollary}[theorem]{Corollary}
\newtheorem{proposition}[theorem]{Proposition}
\newtheorem{conjecture}[theorem]{Conjecture}
\newcommand{\tto}{\twoheadrightarrow}
\font\sc=rsfs10
\newcommand{\cC}{\sc\mbox{C}\hspace{1.0pt}}
\newcommand{\cL}{\sc\mbox{L}\hspace{1.0pt}}
\newcommand{\cP}{\sc\mbox{P}\hspace{1.0pt}}
\newcommand{\cA}{\sc\mbox{A}\hspace{1.0pt}}
\font\scc=rsfs7
\newcommand{\ccP}{\scc\mbox{P}\hspace{1.0pt}}
\newcommand{\ccA}{\scc\mbox{A}\hspace{1.0pt}}
\begin{document}

\title[Serre functors for Lie superalgebras]{Serre functors 
for Lie algebras and superalgebras}
\author{Volodymyr Mazorchuk and Vanessa Miemietz}
\date{\today}

\begin{abstract}
We propose a new realization, using Harish-Chandra bimodules, 
of the Serre functor for the BGG category $\mathcal{O}$ 
associated to a semi-simple complex finite dimensional Lie 
algebra. We further show that our realization carries over
to classical Lie superalgebras in many cases. Along the way
we prove that category $\mathcal{O}$ and its parabolic
generalizations for classical Lie superalgebras are categories
with full projective functors. As an application we prove that
in  many cases the endomorphism algebra of the basic 
projective-injective module in (parabolic) category $\mathcal{O}$
for classical Lie superalgebras is symmetric. As a special case
we obtain that in these cases the algebras describing blocks 
of the category
of finite dimensional modules are symmetric. We also compute
the latter algebras for the superalgebra $\mathfrak{q}(2)$.
\end{abstract}

\maketitle

%\tableofcontents

\section{Introduction and description of the results}\label{s0}

The category of finite dimensional modules over a semi-simple complex
finite dimensional Lie algebra is semi-simple and hence 
completely understood. For Lie superalgebras the situation is
rather more complicated. Although many blocks of the  
category of finite dimensional modules over a Lie superalgebra
are still semi-simple, a general block might have infinitely
many simple modules or infinite global dimension. The recent preprint
\cite{BS} gives a combinatorial description of associative algebras
whose module categories are equivalent to blocks of the category of 
finite dimensional $\mathfrak{gl}(m,n)$-modules. This description 
implies many nice properties of these algebras, in particular, it turns 
out that these algebras are symmetric and Koszul (this is proved in
earlier articles of the series). To solve a similar 
problem for other Lie superalgebras is an open and seemingly difficult task.

Understanding the category of finite dimensional modules is the first 
step towards understanding the representation theory of a Lie superalgebra.
The next natural step is to understand the analogue of the BGG category
$\mathcal{O}$ (and its numerous generalizations). Various questions
related to the general theory of highest weight modules over 
finite dimensional Lie superalgebras have been studied  in
\cite{Br2,Br3,Go2,CLW}, see also references therein. However, as in the
case of finite dimensional modules, most of the problems in this
theory are still to be solved.

Among the classical Lie superalgebras the queer Lie superalgebra
$\mathfrak{q}(n)$ stands out in many ways. While it looks
deceptively easy, consisting only of an even and an odd copy
of $\mathfrak{gl}_n$, its representation theory is surprisingly
intricate. One of the roots of this problem is that the classical 
Cartan subalgebra of $\mathfrak{q}(n)$ is non-commutative. Two major
achievements in the representation theory of $\mathfrak{q}(n)$
were a description of characters of simple finite 
dimensional $\mathfrak{q}(n)$-modules given in \cite{PS,PS2},
and a relation of character formulae for $\mathfrak{q}(n)$ to
Kazhdan-Lusztig combinatorics in \cite{Br3}. The latter article 
contains, in particular, conjectural combinatorics for the whole category
$\mathcal{O}$. For ``easy'' blocks this conjecture was proved in \cite{FM}.

Motivated by the ultimate goal of proving the conjectures from 
\cite{Br3} and extending the results of \cite{BS} to 
$\mathfrak{q}(n)$, in the present article we take some first steps 
in this direction. We consider a classical Lie superalgebra
$\mathfrak{g}$ from the list
$\mathfrak{gl}(m,n)$, $\mathfrak{sl}(m,n)$, $\mathfrak{osp}(m,n)$, 
$\mathfrak{psl}(n,n)$, $\mathfrak{q}(n)$, $\mathfrak{pq}(n)$, 
$\mathfrak{sq}(n)$, $\mathfrak{psq}(n)$. We show that a block of the
category of finite dimensional $\mathfrak{g}$-modules, related in a
certain way to a simple strongly typical module which is not stable 
under parity change, is described by a symmetric algebra.
For the first four superalgebras this covers all blocks and for
the last four this covers roughly half of the blocks.

To this end we study Serre functors for the category 
$\mathcal{O}$ and its parabolic generalizations. The Lie algebraic
counterpart of this theory was developed in \cite{MS} and is heavily
based on the combinatorial description of blocks of 
$\mathcal{O}$ for Lie algebras, which does not yet exist for Lie
superalgebras. Therefore we are forced to reprove the main 
results from \cite{MS} in a completely different way. The paper
\cite{MS} establishes the important property that 
the Serre functor naturally commutes with projective functors
(in the sense of \cite{Kh}). This silently uses the fact that
the category $\mathcal{O}$ for Lie algebras is a category with 
full projective functors in the sense of \cite{Kh}. 

We show that, under some natural assumptions, the category 
$\mathcal{O}$ for Lie superalgebras as well as its parabolic 
generalizations are categories with full projective functors. 
This setup involves a choice of the so-called ``dominant object'' 
$\Delta$, which in our case turns out to be a (parabolic) Verma 
module, whose highest weight satisfies certain conditions.
We observe that a convenient way to define functors naturally 
commuting with projective functors is to use Harish-Chandra bimodules. 
For two $\mathfrak{g}$-modules $M$ and $N$ the space
$\mathcal{L}(M,N)$ of all linear maps from $M$ to $N$ which are
locally finite with respect to the adjoint action of the
even part has a natural $\mathfrak{g}$-bimodule structure.
Extending results of \cite{BG} and \cite{MiSo} we show that 
certain direct summands $\mathcal{O}_{\overline{\lambda}}^{\mathfrak{p}}$ 
of the (parabolic) category $\mathcal{O}^{\mathfrak{p}}$ for Lie 
superalgebras are equivalent to categories of Harish-Chandra bimodules.
On the latter we have the usual restricted duality ${}_-{}^{\circledast}$.
As the main result of the paper we prove the following.

\begin{theorem}\label{tmain}
For category $\mathcal{O}_{\overline{\lambda}}^{\mathfrak{p}}$,
the left derived of the functor
$\mathcal{L}({}_-,\Delta)^{\circledast}\otimes_{U(\mathfrak{g})}\Delta$
is a Serre functor on the corresponding category of perfect complexes.
\end{theorem}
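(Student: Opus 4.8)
The plan is to reduce the assertion to the standard fact that, for a finite-dimensional algebra $A$, the left derived Nakayama functor $D(A)\otimes_{A}^{\mathbf L}({}_-)$ is a Serre functor on the category of perfect complexes over $A$, and then to identify the functor $\mathbb{F}:=\mathcal{L}({}_-,\Delta)^{\circledast}\otimes_{U(\mathfrak{g})}\Delta$ with the Nakayama functor of $\mathcal{O}_{\overline{\lambda}}^{\mathfrak{p}}$. First I would fix an equivalence $\mathcal{O}_{\overline{\lambda}}^{\mathfrak{p}}\simeq A\text{-mod}$ with $A$ finite-dimensional and write $\nu$ for the corresponding Nakayama functor. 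Since $\mathcal{L}({}_-,\Delta)$ is contravariant and left exact, $\circledast$ is a contravariant exact duality, and tensoring over $U(\mathfrak{g})$ with $\Delta$ is right exact --- in fact exact on the block of Harish-Chandra bimodules in question, being the quasi-inverse of the equivalence $\mathcal{L}(\Delta,{}_-)$ obtained by extending \cite{BG,MiSo} --- the composite $\mathbb{F}$ is a covariant right exact endofunctor. As both $\mathbf{L}\mathbb{F}$ and $\mathbf{L}\nu$ are computed from projective resolutions and agree with $\mathbb{F}$, respectively $\nu$, on projectives, it will suffice to construct a natural isomorphism of functors $\mathbb{F}\cong\nu$.

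By Eilenberg--Watts it is enough to produce such an isomorphism on the additive category of projective objects, and here I would use that $\mathcal{O}_{\overline{\lambda}}^{\mathfrak{p}}$ is a category with full projective functors with dominant object $\Delta$: every indecomposable projective is a direct summand of $\theta\Delta$ for a suitable projective functor $\theta$. Thus it suffices to verify (i) that $\mathbb{F}$ commutes with projective functors and (ii) that $\mathbb{F}(\Delta)\cong\nu(\Delta)$. For (i) one combines the standard isomorphisms $\mathcal{L}(V\otimes M,N)\cong\mathcal{L}(M,V^{*}\otimes N)$ and $\mathcal{L}(M,V\otimes N)\cong V\otimes\mathcal{L}(M,N)$ for finite-dimensional $V$ (projections onto the appropriate generalized central characters being understood), the facts that $\circledast$ carries a projective functor to its $\circledast$-dual and that projective functors on Harish-Chandra bimodules are stable under $\circledast$, and the compatibility of $({}_-)\otimes_{U(\mathfrak{g})}\Delta$ with projective functors; together these show that $\mathbb{F}$ commutes with tensoring by finite-dimensional modules and hence with all projective functors. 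This is precisely the sense in which Harish-Chandra bimodules provide a convenient description of functors commuting with projective functors.

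For (ii), note that by its defining properties $\Delta$ is the indecomposable projective cover $P(\lambda)$ of $L(\lambda)$ in $\mathcal{O}_{\overline{\lambda}}^{\mathfrak{p}}$. Under the equivalence $\mathcal{L}(\Delta,{}_-)\colon\mathcal{O}_{\overline{\lambda}}^{\mathfrak{p}}\xrightarrow{\ \sim\ }\mathcal{H}$, with quasi-inverse $({}_-)\otimes_{U(\mathfrak{g})}\Delta$, the bimodule $\mathcal{L}(\Delta,\Delta)$ is the corresponding indecomposable projective of $\mathcal{H}$, so $\mathcal{L}(\Delta,\Delta)^{\circledast}$ is the corresponding indecomposable injective of $\mathcal{H}$ (as $\circledast$ is a simple-preserving duality), and therefore $\mathbb{F}(\Delta)=\mathcal{L}(\Delta,\Delta)^{\circledast}\otimes_{U(\mathfrak{g})}\Delta$ is the injective hull $I(\lambda)$ of $L(\lambda)$ in $\mathcal{O}_{\overline{\lambda}}^{\mathfrak{p}}$. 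Since $\nu(P(\lambda))\cong I(\lambda)$, (ii) follows.

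Combining (i) and (ii) yields natural isomorphisms $\mathbb{F}(\theta\Delta)\cong\theta\mathbb{F}(\Delta)\cong\theta\nu(\Delta)\cong\nu(\theta\Delta)$, hence $\mathbb{F}\cong\nu$ on projectives, hence $\mathbb{F}\cong\nu$ by right exactness, and finally $\mathbf{L}\mathbb{F}\cong\mathbf{L}\nu$, which is a Serre functor on perfect complexes as recalled above. The hard part will be step (i): one must track the two commuting $U(\mathfrak{g})$-actions on $\mathcal{L}({}_-,\Delta)$ through the duality $\circledast$ and through the projective functors, and pin down the exactness behaviour of $({}_-)\otimes_{U(\mathfrak{g})}\Delta$ on the essential image of $\circledast\circ\mathcal{L}({}_-,\Delta)$; a secondary difficulty will be to choose the isomorphisms in (i) and (ii) naturally, so that they assemble into an isomorphism of functors rather than merely a pointwise one.
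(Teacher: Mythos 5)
Your approach is genuinely different from the paper's. The paper establishes the Serre duality isomorphism
\begin{displaymath}
\mathrm{Hom}_{\mathfrak{g}}\big(N,\mathcal{L}(P,\Delta)^{\circledast}\otimes_{U}\Delta\big)\cong \mathrm{Hom}_{\mathfrak{g}}(P,N)^*
\end{displaymath}
directly, via a Hochschild-homology computation (Proposition~\ref{prop55} and its superalgebra analogue inside the proof of Theorem~\ref{thm52}). Only \emph{afterwards} does it deduce, as corollaries, that $\mathcal{L}({}_-,\Delta)^{\circledast}\otimes_{U}\Delta$ is the Nakayama functor and that the Nakayama functor naturally commutes with projective functors (Corollaries~\ref{cor7}, \ref{cor8}, \ref{cor51}). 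You instead try to run the uniqueness principle for fpf-categories in reverse: prove that $\mathbb{F}=\mathcal{L}({}_-,\Delta)^{\circledast}\otimes_{U}\Delta$ naturally commutes with projective functors, check $\mathbb{F}(\Delta)\cong\nu(\Delta)$, and conclude $\mathbb{F}\cong\nu$.

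This is where the gap is. The uniqueness statement you invoke (\cite{Kh}, Subsection~\ref{s2.3}) says that a functor naturally commuting with projective functors is determined by its value at the dominant object; to conclude $\mathbb{F}\cong\nu$ one needs \emph{both} $\mathbb{F}$ and $\nu$ to naturally commute with projective functors. You verify this for $\mathbb{F}$, but then write the unjustified step $\theta\nu(\Delta)\cong\nu(\theta\Delta)$, which is exactly the assertion that $\nu$ commutes with projective functors. The paper explicitly poses this as an open question at the end of Subsection~\ref{s2.3} and answers it only as a \emph{consequence} of Theorem~\ref{thm4}/\ref{thm52}, not as an independent input; in fact the whole point of using Harish-Chandra bimodules rather than the combinatorial apparatus of \cite{MS} is that no independent proof of this commutation was available in the super setting. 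Your argument could perhaps be repaired: for a finite-dimensional (or slf-) algebra, $\nu$ does commute with any functor possessing a biadjoint, by a Yoneda argument using $\mathrm{Hom}(N,\nu P)\cong\mathrm{Hom}(P,N)^*$, and projective functors are biadjoint. But then one must also check that these isomorphisms are \emph{coherent} with respect to all natural transformations between projective functors (the commuting squares of Subsection~\ref{s2.3}), since the fpf uniqueness principle needs natural commuting, not merely pointwise commuting; none of this is addressed in the proposal. A secondary but real issue is in step (ii): you assert that $\circledast$ is a simple-preserving duality and hence $\mathcal{L}(\Delta,\Delta)^{\circledast}\otimes_U\Delta\cong I(\lambda)$, but $\circledast$ exchanges the categories of bimodules annihilated by $I$ on the left and on the right, and one must check that under the equivalence \eqref{eq21} it is intertwined with the duality $\star$ on $\mathcal{O}^{\mathfrak{p}}_{\overline{\lambda}}$ (which in the queer case is only simple preserving up to $\Pi$, so the hypothesis $\Pi V\not\cong V$ enters here too). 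In short: if the lemma ``$\nu$ naturally commutes with projective functors'' were supplied independently, your route would give a shorter proof than the paper's; as written, it silently assumes a key fact which the paper treats as a theorem in its own right.
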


Even in the case of Lie algebras this description of the Serre functor
is new. The proof relies heavily on methods developed by Gorelik
in \cite{Go4}, which we adapt to our more general situation. 
As an application we obtain that the endomorphism
algebra of the basic projective-injective module in 
$\mathcal{O}_{\overline{\lambda}}^{\mathfrak{p}}$ is symmetric.
For Lie superalgebras of type $I$ this reproves and strengthens 
\cite[Theorem~3.9.1]{BKN} which says that the endomorphism algebra of 
a basic projective-injective finite dimensional module is weakly symmetric.
We use the latter result to compute the associative algebras describing
blocks of the category of finite dimensional $\mathfrak{q}(2)$-modules.

The article is organized as follows: In Section~\ref{s1} we collect
necessary preliminaries on modules over $\Bbbk$-linear categories,
Serre functors and symmetric algebras. In Section~\ref{s2} 
we recall the theory of categories with full projective functors
following \cite{Kh}.
In Section~\ref{s3} and Section~\ref{s4} we prove the main results 
for Lie algebras and superalgebras, respectively.
Finally, Section~\ref{s5} is devoted to the above mentioned example
of $\mathfrak{q}(2)$.

\vspace{0.5cm}

\noindent
{\bf Acknowledgments.} The first author was partially supported
by the Swedish Research Council. This work was done
during a visit of the second author to Uppsala University, which
was supported by the Faculty of Natural Science of Uppsala University.
The financial support and the hospitality of Uppsala University
are gratefully acknowledged. We thank Maria Gorelik for sharing
with us her ``super''ideas. We are grateful to the referee for
very useful comments and suggestions.

\section{Serre functors for $\Bbbk$-linear categories}\label{s1}

\subsection{Conventions}\label{s1.0}

For an abstract category $\cC$ and two objects $x,y\in\cC$
we denote by $\cC(x,y)$ the set of morphisms from $x$ to $y$
in $\cC$. At the same time, for categories of modules and the
corresponding derived categories, we will
use the usual $\mathrm{Hom}$ notation. In particular, working
with $\cC$-modules (that is functors from $\cC$ to some fixed
category), $\mathrm{Hom}_{\cC}$ will mean morphisms
in the category of $\cC$-modules.

For an abelian category $\mathcal{A}$ we denote by 
$\mathcal{D}^{-}(\mathcal{A})$ and $\mathcal{D}^{b}(\mathcal{A})$
the derived categories of complexes (of objects in $\mathcal{A}$)
bounded from the right and from both sides, respectively. 
If $\mathcal{A}$ is the category of $\cC$-modules for some category
$\cC$, the corresponding derived categories
$\mathcal{D}^{-}(\mathcal{A})$ and $\mathcal{D}^{b}(\mathcal{A})$
will be denoted simply by 
$\mathcal{D}^{-}(\cC)$ and $\mathcal{D}^{b}(\cC)$, respectively 
(and we will never consider categories
of modules over abelian categories).
For a right exact functor $\mathrm{F}$ on $\mathcal{A}$
we denote by $\cL\mathrm{F}$ 
the corresponding left derived functor.

For the rest of the paper we fix an algebraically closed field 
$\Bbbk$ and denote by ${{}_-}^*$ the usual duality 
$\mathrm{Hom}_{\Bbbk}({}_-,\Bbbk)$. 
All categories in the paper are assumed to be $\Bbbk$-linear
(i.e. enriched over $\Bbbk\text{-}\mathrm{mod}$) and all
functors are supposed to be $\Bbbk$-linear and additive.
If not stated otherwise, a functor always means a {\em covariant}
functor.

\subsection{Serre functors}\label{s1.1}

Let $\cC$ be a $\Bbbk$-linear additive category with 
finite dimensional morphism spaces. A {\em Serre functor} on 
$\cC$ is an additive auto-equivalence $\mathrm{F}$ of $\cC$ together 
with isomorphisms
\begin{equation}\label{eq1}
\Psi_{x,y}:\cC(x,\mathrm{F}y)\cong\cC(y,x)^*,
\end{equation}
natural in $x$ and $y$ (see \cite{BK}). If a Serre functor exists,
it is unique (up to isomorphism) and commutes with all 
auto-equivalences of $\cC$.

For example, let $A$ be a finite dimensional associative 
$\Bbbk$-algebra of finite global dimension. Then the 
category $\mathcal{D}^b(A)$ always has a Serre functor,
which is given by the left derived of the {\em Nakayama functor}
$A^*\otimes_A{}_-:A\text{-}\mathrm{mod}\to A\text{-}\mathrm{mod}$
(see \cite{Ha}). Note that we have an isomorphism of functors
$A^*\otimes_A{}_-\cong \mathrm{Hom}_{A}({}_-,A)^*$ (as both are
right exact and agree on the projective generator $A$).
The last example is typical in the sense that to have a Serre 
functor one usually has to extend the original abelian category
$A\text{-}\mathrm{mod}$ to the triangulated category 
$\mathcal{D}^b(A)$ (or some other triangulated category, for example,
the category of perfect complexes, see Subsection~\ref{s1.3}).

\subsection{Infinite dimensional setup}\label{s1.2}

In what follows we will often work with abelian categories having
infinitely many isoclasses of simple objects. Therefore
we will need an ``infinite dimensional'' generalization of the
last example. Consider a small $\Bbbk$-linear category 
$\mathcal{C}$  which satisfies the following assumptions:
\begin{enumerate}[$($I$)$]
\item\label{cond0} $\mathcal{C}$ is {\em basic} in the sense that 
different objects from $\mathcal{C}$ are not isomorphic;
\item\label{cond1} for any $x,y\in \mathcal{C}$ the $\Bbbk$-vector space 
$\mathcal{C}(x,y)$ is finite dimensional;
\item\label{cond2} for any $x\in \mathcal{C}$ there exist only finitely many
$y\in \mathcal{C}$ such that  $\mathcal{C}(x,y)\neq 0$;
\item\label{cond3} for any $x\in \mathcal{C}$ there exist only finitely many
$y\in \mathcal{C}$ such that $\mathcal{C}(y,x)\neq 0$;
\item\label{cond4} for any $x\in \mathcal{C}$ the endomorphism algebra
$\mathcal{C}(x,x)$ is local and basic.
\end{enumerate}
We will call such categories {\em strongly locally finite},
or simply {\em slf-categories}.

A {\em left $\mathcal{C}$-module} is a covariant functor 
$M:\mathcal{C}\to \Bbbk\text{-}\mathrm{Mod}$. A $\mathcal{C}$-module 
$M$ is said  to be {\em finite dimensional} provided that 
$\sum_{x\in \mathcal{C}}\dim M(x)<\infty$.
We denote by  $\mathcal{C}\text{-}\mathrm{mod}$ the category of all
finite dimensional $\mathcal{C}$-modules. 
The corresponding notion and category  
$\mathrm{mod}\text{-}\mathcal{C}$ of {\em right} modules 
are defined similarly using contravariant functors.
The functor ${{}_-}^*$ induces a duality between
$\mathcal{C}\text{-}\mathrm{mod}$ and
$\mathrm{mod}\text{-}\mathcal{C}$. 

Because of our assumptions \eqref{cond0}-\eqref{cond4}, 
indecomposable projective $\mathcal{C}$-modules are of the form
$\mathcal{C}(x,{}_-)$, $x\in\mathcal{C}$, and belong to 
$\mathcal{C}\text{-}\mathrm{mod}$.
Hence $\mathcal{C}\text{-}\mathrm{mod}$ is an abelian length category
(i.e. every object has finite length)
with enough projectives and injectives (because of ${{}_-}^*$). 
Let $\overline{\mathcal{C}}$ denote the full subcategory of 
$\mathcal{C}\text{-}\mathrm{mod}$ consisting of the indecomposable
projectives $\mathcal{C}(x,{}_-)$, $x\in \mathcal{C}$. Then 
$\mathcal{C}\text{-}\mathrm{mod}$ is
equivalent to the category of finite-dimensional 
$\overline{\mathcal{C}}^{\mathrm{op}}$-modules, moreover, 
the category $\overline{\mathcal{C}}^{\mathrm{op}}$ is isomorphic to 
$\mathcal{C}$.

\begin{remark}\label{remreferee}
{\rm  
For an slf-category $\mathcal{C}$ denote by $A_{\mathcal{C}}$
the path algebra of $\overline{\mathcal{C}}^{\mathrm{op}}$. 
Then the category $\mathcal{C}\text{-}\mathrm{mod}$ is equivalent
to the category of finite-dimensional $A_{\mathcal{C}}$-modules.
The algebra $A_{\mathcal{C}}$ can be axiomatically described by
the following properties:
\begin{itemize}
\item $A_{\mathcal{C}}$ is equipped with a system 
$\{e_{\mathtt{i}}:\mathtt{i}\in\cC\}$ of primitive
orthogonal idempotents (the identity morphisms for objects of $\cC$);
\item each $A_{\mathcal{C}}e_{\mathtt{i}}$ and 
$e_{\mathtt{i}}A_{\mathcal{C}}$ is finite dimensional
(this combines \eqref{cond1}--\eqref{cond3});
\item $\displaystyle A_{\mathcal{C}}=
\bigoplus_{\mathtt{i},\mathtt{j}\in\cC}
e_{\mathtt{i}}A_{\mathcal{C}}e_{\mathtt{j}}$.
\end{itemize}

}
\end{remark}

A {\em $\mathcal{C}$-bimodule} is a bifunctor 
$B=B({}_-,{}_-)$ from $\mathcal{C}$ to $\Bbbk\text{-}\mathrm{Mod}$,
contravariant in the first (left) argument and covariant in the second
(right) argument. A typical example of a bimodule is the regular
bimodule $\mathcal{C}=\mathcal{C}({}_-,{}_-)$.

The Nakayama functor $\mathrm{N}:=\mathcal{C}^*\otimes_{\mathcal{C}}{}_-
\cong\mathrm{Hom}_{\mathcal{C}}({}_-,\mathcal{C})^*$ 
is an endofunctor of 
$\mathcal{C}\text{-}\mathrm{mod}$ (for more details on  
tensor  products see \cite[2.2]{MOS}). Consider the endofunctor
$\cL\mathrm{N}$ of $\mathcal{D}^{-}(\mathcal{C})$.
Let $\cP(\mathcal{C})$ denote the full subcategory of 
$\mathcal{D}^{-}(\mathcal{C})$ 
consisting of objects isomorphic to
finite complexes of projective objects  
(the so-called {\em perfect complexes}). Our first easy observation
is the following (compare with \cite[4.3]{MS}):

\begin{proposition}\label{prop1}
Assume that all injective $\mathcal{C}$-modules are of finite 
projective dimension. Then $\cL\mathrm{N}$ is a Serre
functor on $\cP(\mathcal{C})$.
\end{proposition}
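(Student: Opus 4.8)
The plan is to reduce the statement to the classical fact that the left derived Nakayama functor is a Serre functor on the bounded derived category of a finite-dimensional algebra of finite global dimension, and then to transfer this to the ``infinite'' slf-setup by a locality argument. First I would observe that, by the hypothesis, every injective $\mathcal{C}$-module has finite projective dimension; by the duality ${{}_-}^*$ this is equivalent to saying every projective $\mathcal{C}$-module has finite injective dimension, and since $\mathcal{C}\text{-}\mathrm{mod}$ has enough projectives and injectives, one checks (using conditions \eqref{cond2}--\eqref{cond3}, which make Ext-computations ``finite'') that $\mathrm{N}=\mathcal{C}^*\otimes_{\mathcal{C}}{}_-$ has finite-dimensional left-derived cohomology and that $\cL\mathrm{N}$ sends $\cP(\mathcal{C})$ to $\cP(\mathcal{C})$: a finite complex of projectives $P^{\bullet}$ is sent to $\mathrm{N}(P^{\bullet})$, a finite complex of injectives, which by the finite-projective-dimension assumption is quasi-isomorphic to a finite complex of projectives. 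A quasi-inverse is provided by $\cL$ of the functor $\mathrm{Hom}_{\mathcal{C}}(\mathcal{C}^*,{}_-)$ (equivalently the right derived of $\mathcal{C}^*\otimes^{\mathrm{op}}{}_-$), which one checks is well defined on $\cP(\mathcal{C})$ by the dual argument; that these are mutually inverse follows because $\mathcal{C}^*\otimes_{\mathcal{C}}^{\mathbf{L}}{}_-$ and $\mathbf{R}\mathrm{Hom}_{\mathcal{C}}(\mathcal{C}^*,{}_-)$ are adjoint and restrict to equivalences on perfect complexes, exactly as in the classical Happel picture.

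Next I would establish the natural isomorphism \eqref{eq1}. It suffices to produce, for $P^{\bullet},Q^{\bullet}\in\cP(\mathcal{C})$, a natural isomorphism
\[
\mathrm{Hom}_{\mathcal{D}^{-}(\mathcal{C})}(P^{\bullet},\cL\mathrm{N}\,Q^{\bullet})\cong
\mathrm{Hom}_{\mathcal{D}^{-}(\mathcal{C})}(Q^{\bullet},P^{\bullet})^*,
\]
and since both sides are cohomological in each variable and send finite direct sums to direct sums, a standard dévissage reduces this to the case $P^{\bullet}=\mathcal{C}(x,{}_-)$ and $Q^{\bullet}=\mathcal{C}(y,{}_-)$ in degree $0$. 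In that case the left-hand side is the $0$-th cohomology of $\mathrm{RHom}_{\mathcal{C}}(\mathcal{C}(x,{}_-),\mathrm{N}\,\mathcal{C}(y,{}_-))=\mathrm{N}\,\mathcal{C}(y,{}_-)$ evaluated at $x$, which is $(\mathcal{C}(y,{}_-)^{\vee})$-dual data; concretely $\mathrm{N}\,\mathcal{C}(y,{}_-)=\mathcal{C}^*(y,{}_-)$ is the injective envelope of the simple at $y$, and $\mathrm{Hom}_{\mathcal{C}}(\mathcal{C}(x,{}_-),\mathcal{C}^*(y,{}_-))\cong\mathcal{C}(y,x)^*$. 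The right-hand side is $\mathrm{Hom}_{\mathcal{C}}(\mathcal{C}(y,{}_-),\mathcal{C}(x,{}_-))^*\cong\mathcal{C}(y,x)^*$ by the Yoneda lemma. One then checks these identifications are natural in $x$ and $y$ (which is where the bimodule/functorial formulation pays off), and since the projectives $\mathcal{C}(x,{}_-)$ generate $\cP(\mathcal{C})$ as a triangulated category, naturality propagates to all perfect complexes.

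The main obstacle I anticipate is not any single step but the bookkeeping needed to make the ``infinite'' category behave like a finite-dimensional algebra: one must be careful that $\mathrm{N}$, its derived functor, and the relevant $\mathrm{Hom}$ and $\otimes$ computations all stay within finite-dimensional modules and finite complexes, so that the perfect-complex category is stable under $\cL\mathrm{N}$ and its quasi-inverse. Here conditions \eqref{cond1}--\eqref{cond4} are exactly what is needed: \eqref{cond1} gives finite-dimensional Hom-spaces, \eqref{cond2}--\eqref{cond3} guarantee that each projective and injective touches only finitely many objects (so Nakayama and minimal resolutions are ``locally finite''), and \eqref{cond0}, \eqref{cond4} ensure the indecomposable projectives are exactly the $\mathcal{C}(x,{}_-)$. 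Modulo this locality check, everything reduces, object by object, to the classical computation recalled in Subsection~\ref{s1.1} via the identification of $\mathcal{C}\text{-}\mathrm{mod}$ with modules over the path algebra $A_{\mathcal{C}}$ of Remark~\ref{remreferee}; indeed one can phrase the whole proof as: $\cP(\mathcal{C})$ is the union of $\mathcal{D}^b(e A_{\mathcal{C}} e)$ over finite idempotents $e$, each of which is a finite-dimensional algebra of finite global dimension by the hypothesis, and $\cL\mathrm{N}$ is compatible with these inclusions.
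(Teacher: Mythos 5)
Your proposal follows essentially the same route as the paper's proof: show $\cL\mathrm{N}$ preserves $\cP(\mathcal{C})$ because $\mathrm{N}$ carries indecomposable projectives to indecomposable injectives (which are perfect by hypothesis), then verify \eqref{eq1} on projectives via the standard Nakayama/Yoneda computation and extend by dévissage using the triangulated structure. The only cosmetic differences are that the paper keeps the first variable an arbitrary perfect complex and reduces only the second to projectives (via a chain of hom--tensor adjunctions rather than Yoneda directly), and that your explicit quasi-inverse $\mathbf{R}\mathrm{Hom}_{\mathcal{C}}(\mathcal{C}^*,{}_-)$ and the $\bigcup_e \mathcal{D}^b(eA_{\mathcal{C}}e)$ reformulation are helpful but not used by the paper; also note a small indexing slip, as Yoneda gives $\mathrm{Hom}_{\mathcal{C}}(\mathcal{C}(y,{}_-),\mathcal{C}(x,{}_-))\cong\mathcal{C}(x,y)$, so both sides of your display should read $\mathcal{C}(x,y)^*$ rather than $\mathcal{C}(y,x)^*$.
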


\begin{proof}
To start with we claim that $\cL\mathrm{N}$ preserves
$\cP(\mathcal{C})$. Indeed, the functor  $\cL\mathrm{N}$ is
a triangle functor and $\cP(\mathcal{C})$ is generated, as
a triangulated category, by indecomposable projective
$\mathcal{C}$-modules. The functor $\mathrm{N}$ maps
an indecomposable projective $\mathcal{C}$-module to an 
indecomposable injective $\mathcal{C}$-module, and the latter
has finite projective dimension and hence is an object of 
$\cP(\mathcal{C})$.

That $\cL\mathrm{N}$ has the property given by \eqref{eq1} 
is checked by the following standard computation: For any 
$N\in\cP(\mathcal{C})$ and projective 
$P\in \mathcal{C}\text{-}\mathrm{mod}$ we have natural isomorphisms
\begin{displaymath}
\begin{array}{rcl}
\mathrm{Hom}_{\ccP(\mathcal{C})} (N,\cL\mathrm{N}\, P)^* &=& 
\mathrm{Hom}_{\ccP(\mathcal{C})} (N,
\mathrm{Hom}_{\Bbbk}(\mathrm{Hom}_{\mathcal{C}}
(P,\mathcal{C}),\Bbbk))^*\\
\text{(by adjunction)}&=& 
\mathrm{Hom}_{\Bbbk} (\mathrm{Hom}_{\mathcal{C}}
(P,\mathcal{C})\otimes_{\mathcal{C}}
N,\Bbbk)^*\\ \text{(as $({}_-{}^*)^*=\mathrm{Id}$)}
&=& \mathrm{Hom}_{\mathcal{C}}
(P,\mathcal{C})\otimes_{\mathcal{C}}N\\
\text{(by projectivity of $P$)}&=& \mathrm{Hom}_{\mathcal{C}}(P,N).
\end{array}
\end{displaymath}
Using the triangle property for $\cL\mathrm{N}$ this extends to
the whole of $\cP(\mathcal{C})$ (in the second variable) and proves
\eqref{eq1}.
\end{proof}

\subsection{Serre functors and symmetric algebras}\label{s1.3}

Let $\mathcal{C}$ be as in the previous subsection. The category
$\mathcal{C}$ is called {\em symmetric} provided that the
$\mathcal{C}\text{-}\mathcal{C}$-bimodules $\mathcal{C}$
and $\mathcal{C}^*$ are isomorphic. Our second observation 
is the following infinite-dimensional generalization of \cite[Lemma~3.1]{MS}:

\begin{proposition}\label{prop2}
Assume that all injective $\mathcal{C}$-modules are of finite 
projective dimension. Then $\mathcal{C}$ is symmetric if and only if
the Serre functor on $\cP(\mathcal{C})$ is isomorphic to the identity.
\end{proposition}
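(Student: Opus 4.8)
The plan is to exploit the explicit description of the Serre functor from Proposition~\ref{prop1}: under the stated hypothesis, the Serre functor on $\cP(\mathcal{C})$ is $\cL\mathrm{N}$, the left derived of the Nakayama functor $\mathrm{N}=\mathcal{C}^*\otimes_{\mathcal{C}}{}_-$. So the statement to prove becomes: $\mathcal{C}^*\cong\mathcal{C}$ as $\mathcal{C}\text{-}\mathcal{C}$-bimodules if and only if $\cL\mathrm{N}\cong\mathrm{Id}$ on $\cP(\mathcal{C})$.

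For the forward direction, if $\mathcal{C}^*\cong\mathcal{C}$ as bimodules, then $\mathrm{N}=\mathcal{C}^*\otimes_{\mathcal{C}}{}_-\cong\mathcal{C}\otimes_{\mathcal{C}}{}_-\cong\mathrm{Id}$ already as endofunctors of $\mathcal{C}\text{-}\mathrm{mod}$; since $\mathrm{N}$ is then exact it equals its own left derived, so $\cL\mathrm{N}\cong\mathrm{Id}$ on all of $\mathcal{D}^{-}(\mathcal{C})$, in particular on $\cP(\mathcal{C})$. For the reverse direction, suppose $\cL\mathrm{N}\cong\mathrm{Id}$ on $\cP(\mathcal{C})$. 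The indecomposable projectives $\mathcal{C}(x,{}_-)$ lie in $\cP(\mathcal{C})$ and are sent by $\cL\mathrm{N}$ to $\mathrm{N}\,\mathcal{C}(x,{}_-)\cong\mathcal{C}(x,{}_-)^*$ (the indecomposable injective with socle $L_x$), concentrated in degree $0$. An isomorphism $\cL\mathrm{N}\cong\mathrm{Id}$ therefore restricts, on each projective, to an isomorphism $\mathcal{C}(x,{}_-)^*\cong\mathcal{C}(x,{}_-)$ in $\mathcal{C}\text{-}\mathrm{mod}$, and these fit into a natural isomorphism of functors $\mathrm{N}|_{\mathrm{proj}}\cong\mathrm{Id}|_{\mathrm{proj}}$ on the full subcategory of projectives. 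The point is then that a $\mathcal{C}\text{-}\mathcal{C}$-bimodule is determined by the endofunctor of $\mathcal{C}\text{-}\mathrm{mod}$ it defines by tensoring, and — since both $\mathcal{C}^*\otimes_{\mathcal{C}}{}_-$ and $\mathcal{C}\otimes_{\mathcal{C}}{}_-$ are right exact and hence determined by their values on the projective generators $\mathcal{C}(x,{}_-)$ together with the morphisms between them — a natural isomorphism on projectives extends to a natural isomorphism $\mathrm{N}\cong\mathrm{Id}$ of endofunctors, which unwinds to a bimodule isomorphism $\mathcal{C}^*\cong\mathcal{C}$. Concretely, evaluating the natural transformation $\mathrm{N}\cong\mathrm{Id}$ on the projectives $\mathcal{C}(x,{}_-)$ and reading off the resulting compatible family of isomorphisms $\mathcal{C}(y,x)^*\cong\mathcal{C}(x,y)$ gives the required bimodule isomorphism directly.

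The step that needs the most care is the passage, in the reverse direction, from ``$\cL\mathrm{N}\cong\mathrm{Id}$ on $\cP(\mathcal{C})$'' to a genuine natural isomorphism of the abelian-category endofunctors $\mathrm{N}$ and $\mathrm{Id}$: one must check that the abstract triangulated isomorphism, when restricted to the subcategory of stalk complexes given by indecomposable projectives, is natural not just object-wise but with respect to all morphisms $\mathcal{C}(x,{}_-)\to\mathcal{C}(x',{}_-)$, so that it descends to a morphism of bimodules. This is where the slf-hypotheses \eqref{cond0}--\eqref{cond4} enter: they guarantee that projectives are compact, that $\cP(\mathcal{C})$ is generated by them, and that $\mathrm{Hom}$ spaces between projectives in $\cP(\mathcal{C})$ agree with those in $\mathcal{C}\text{-}\mathrm{mod}$ and are concentrated in degree zero, so there is no room for the isomorphism to fail to be natural at the level of the module category. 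Once this is in place, everything else is the bookkeeping identification of right-exact endofunctors of $\mathcal{C}\text{-}\mathrm{mod}$ with $\mathcal{C}\text{-}\mathcal{C}$-bimodules (as in \cite[2.2]{MOS}), exactly as in the finite-dimensional case \cite[Lemma~3.1]{MS}.
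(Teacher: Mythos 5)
Your proof is correct and follows essentially the same route as the paper: identify the Serre functor with $\cL\mathrm{N}$ via Proposition~\ref{prop1}, then observe that both directions reduce to comparing the right exact functors $\mathrm{N}$ and $\mathcal{C}\otimes_{\mathcal{C}}{}_-$ on the projective generators. The naturality concern you flag in the reverse direction is left implicit in the paper but is in fact immediate (restricting a natural isomorphism of triangulated endofunctors to the full subcategory of stalk complexes of projectives, whose morphism spaces coincide with those in $\mathcal{C}\text{-}\mathrm{mod}$, automatically yields a natural isomorphism of $\mathrm{N}$ and $\mathrm{Id}$ on projectives), so your extra paragraph is careful bookkeeping rather than a substantive addition.
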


\begin{proof}
By Proposition~\ref{prop1}, under our assumptions
the functor $\cL\mathrm{N}$ is the 
Serre functor on $\cP(\mathcal{C})$. If $\mathcal{C}$ is symmetric,
then $\mathrm{N}=\mathcal{C}^*\otimes_{\mathcal{C}}{}_-
\cong \mathcal{C}\otimes_{\mathcal{C}}{}_-$, the latter being isomorphic to
the identity functor on $\mathcal{C}\text{-}\mathrm{mod}$.
Hence $\cL\mathrm{N}$ is isomorphic to the identity functor.

Conversely, if $\cL\mathrm{N}$ is isomorphic to the identity 
functor, then $\mathrm{N}$
is isomorphic to the identity functor, when restricted to the
additive subcategory of all projective modules in 
$\mathcal{C}\text{-}\mathrm{mod}$. The latter is certainly true for the
functor $\mathcal{C}\otimes_{\mathcal{C}}{}_-$. As both
$\mathrm{N}$ and $\mathcal{C}\otimes_{\mathcal{C}}{}_-$
are right exact and agree on projective modules, they must be 
isomorphic, which implies that the bimodules 
$\mathcal{C}^*$ and $\mathcal{C}$
are isomorphic.
\end{proof}

\section{Categories with full projective functors}\label{s2}

\subsection{Definitions}\label{s2.1}

Let $\cA$ be an abelian category, $M\in \cA$ and 
$\mathcal{F}:=\{\mathrm{F}_i:i\in I\}$ a full subcategory of the
category  of right exact endofunctors of $\cA$.
We assume that $\mathcal{F}$ is closed (up to isomorphism) under 
direct sums and composition. 
Following \cite{Kh} we say that $(\cA,M,\mathcal{F})$  
is a {\em category with full projective functors}, or, simply, an
{\em fpf-category}, provided that 
\begin{enumerate}[$($i$)$]
\item\label{fpf1} $\mathrm{Id}_{\ccA}\in \mathcal{F}$;
\item\label{fpf2} for every $i\in I$ the object 
$\mathrm{F}_i\, M$ is projective in $\cA$;
\item\label{fpf3} every $N\in \cA$ is a quotient of 
$\mathrm{F}_i\, M$ for some $i\in I$;
\item\label{fpf4} for all $i,j\in I$ the evaluation map
$\mathrm{ev}_M:\mathcal{F}(\mathrm{F}_i,\mathrm{F}_j)\to
\cA(\mathrm{F}_i\, M,\mathrm{F}_j\, M)$ is surjective.
\end{enumerate}
Functors in $\mathcal{F}$ are called {\em projective functors}
and $M$ is called the {\em dominant object}.

\subsection{Basic examples}\label{s2.2}

Let $\mathfrak{g}$ be a semi-simple finite-dimensional complex
Lie algebra with a fixed triangular decomposition 
$\mathfrak{g}=\mathfrak{n}_-\oplus \mathfrak{h}\oplus\mathfrak{n}_+$.
Consider the BGG category $\mathcal{O}$ associated with this
triangular decomposition
(for details on $\mathcal{O}$ we refer the reader to \cite{Hu}). 
Let $\Theta$ be the set of weights of all finite dimensional
$\mathfrak{g}$-modules. For $\lambda\in 
\mathfrak{h}^*$ consider the coset $\overline{\lambda}=\lambda+
\Theta$ and denote by $\mathcal{O}_{\overline{\lambda}}$
the full subcategory of $\mathcal{O}$ containing all modules
$M$ such that $\mathrm{supp}\, M\subset \overline{\lambda}$. 
Fix any dominant regular $\mu\in \overline{\lambda}$ and consider
the corresponding Verma module $\Delta(\mu)$ (with highest weight $\mu$).

For every finite-dimensional $\mathfrak{g}$-module $E$ the functor
$E\otimes_{\mathbb{C}}{}_-$ preserves $\mathcal{O}_{\overline{\lambda}}$.
Let $\mathcal{F}$ denote the family of all such functors.
Then $(\mathcal{O}_{\overline{\lambda}},\Delta(\mu),\mathcal{F})$ is
an fpf-category, see \cite[Proposition~16]{Kh}.

If $\mathfrak{p}$ is a parabolic subalgebra of $\mathfrak{g}$
containing $\mathfrak{h}\oplus\mathfrak{n}_+$, we have the
full subcategory $\mathcal{O}_{\overline{\lambda}}^{\mathfrak{p}}$
of $\mathcal{O}_{\overline{\lambda}}$ consisting of all modules
on which the action of $\mathfrak{p}$ is locally finite. 
Every functor in $\mathcal{F}$ preserves 
$\mathcal{O}_{\overline{\lambda}}^{\mathfrak{p}}$.
Let $\Delta(\mu)^{\mathfrak{p}}$ denote the maximal quotient of 
$\Delta(\mu)$ which lies in $\mathcal{O}_{\overline{\lambda}}^{\mathfrak{p}}$.
Then $(\mathcal{O}_{\overline{\lambda}}^{\mathfrak{p}},
\Delta(\mu)^{\mathfrak{p}},\mathcal{F})$ is
an fpf-category, see \cite[Proposition~22]{Kh}.
For further examples of categories with full projective functors 
see \cite{Kh}.

\subsection{Functors naturally commuting with projective 
functors}\label{s2.3}

Assume that $(\cA,M,\mathcal{F})$ is an fpf-category as 
in Subsection~\ref{s2.1}. An endofunctor 
$\mathrm{G}$ of $\cA$ is said to {\em naturally commute}
with projective functors if for every $i\in I$ there is an isomorphism
$\eta_i:\mathrm{F}_i\circ \mathrm{G}\to \mathrm{G}\circ\mathrm{F}_i$ 
such that for any $i,j\in I$ and any $\alpha\in 
\mathcal{F}(\mathrm{F}_i,\mathrm{F}_j)$
the diagram
\begin{displaymath}
\xymatrix{ 
\mathrm{F}_i\circ\mathrm{G}\ar[rr]^{\alpha_{\mathrm{G}}} 
\ar[d]_{\eta_i} && \mathrm{F}_j\circ\mathrm{G}\ar[d]^{\eta_j}\\
\mathrm{G}\circ\mathrm{F}_i\ar[rr]^{\mathrm{G}(\alpha)} 
&& \mathrm{G}\circ\mathrm{F}_j\\
}
\end{displaymath}
commutes. A functor naturally commuting with projective functors is
determined uniquely (up to isomorphism) by its image on
the dominant object $M$, see \cite{Kh}. For examples of 
functors naturally commuting with 
projective functors (in the situations described in 
Subsection~\ref{s2.2}) we again refer the reader to \cite{Kh}.
A natural question to ask is under what assumptions the 
Nakayama functor naturally commutes with projective functors.

\section{Serre functors for Lie algebras}\label{s3}

\subsection{Harish-Chandra bimodules and category $\mathcal{O}$}\label{s3.1}

From now on $\Bbbk=\mathbb{C}$ and 
we abbreviate $\otimes_{\mathbb{C}}$ by $\otimes$.
Consider the setup of Subsection~\ref{s2.2} and denote by $\mathcal{H}$
the category of {\em Harish-Chandra bimodules} for $\mathfrak{g}$. This 
category consists of all finitely generated $\mathfrak{g}$-bimodules 
on which the adjoint action of $\mathfrak{g}$ is locally finite (see 
\cite{BG} or \cite[Kapitel~6]{Ja}). 

Let $U=U(\mathfrak{g})$ denote the universal enveloping algebra 
of $\mathfrak{g}$. For a primitive ideal $I$ of $U$
let $\mathcal{H}_{I}^1$ denote the full subcategory of $\mathcal{H}$
which consists of all bimodules annihilated by $I$ from the right.
For $\lambda\in\mathfrak{h}^*$ let $I_{\lambda}$ denote the
annihilator of $\Delta(\lambda)$. If $\lambda$ is dominant and
$I$ is a primitive ideal containing $I_{\lambda}$, we denote by
$\Delta^I(\lambda)$ the quotient $\Delta(\lambda)/I\Delta(\lambda)$.
We denote by $\mathcal{O}_I$ the full subcategory of
$\mathcal{O}_{\overline{\lambda}}$ consisting of all modules which are
isomorphic to quotients of modules of the form 
$E\otimes \Delta^I(\lambda)$, where $E$ is a finite-dimensional
$\mathfrak{g}$-module (the categories $\mathcal{O}_0^{\hat{\mathbf{R}}}$ 
from \cite[4.3]{MS2} are direct summands of $\mathcal{O}_I$).
Let ${}_-{}^{\star}$ be the usual duality on $\mathcal{O}$. The duality
${}_-{}^{\star}$ restricts to $\mathcal{O}_I$.

For two $\mathfrak{g}$-modules $M$ and $N$, let $\mathcal{L}(M,N)$ denote the
$\mathfrak{g}$-bimodule of all linear maps from $M$ to $N$ which are
locally finite with respect to the adjoint action of $\mathfrak{g}$
(see \cite[Kapitel~6]{Ja}). For $\lambda\in \mathfrak{h}^*$ dominant
and regular we have an equivalence of categories as
follows (see \cite[Theorem~5.9]{BG} for the category $\mathcal{O}$
and \cite[Theorem~3.1]{MiSo} for the general statement):
\begin{equation}\label{eq2}
\xymatrix{
\mathcal{O}_{I}
\ar@/^7pt/[rrrr]^{\mathcal{L}(\Delta^I(\lambda),{}_-)}&&&&
\mathcal{H}_{I}^1
\ar@/^7pt/[llll]^{{}_-\otimes_{U}\Delta^I(\lambda)}
}
\end{equation}
Fix one representative in every isomorphism class of indecomposable 
projective objects in $\mathcal{O}_{I}$ and denote by $\mathcal{C}_I$ 
the full subcategory of $\mathcal{O}_{I}$ generated by these fixed 
objects. Then $\mathcal{C}_I$ is an slf-category and 
$\mathcal{O}_{I}$ is equivalent to 
$\mathcal{C}_I\text{-}\mathrm{mod}$ (note that 
$\mathcal{C}_I\cong \mathcal{C}_I^{\mathrm{op}}$ because of $\star$).
The canonical equivalence between $\mathcal{O}_{I}$ and
$\mathcal{C}_I^{\mathrm{op}}\text{-}\mathrm{mod}$
sends $M\in \mathcal{O}_{I}$ to the $\mathcal{C}^{\mathrm{op}}_I$-module 
$\mathrm{Hom}_{\mathfrak{g}}({}_-,M)$ (which is a functor
from $\mathcal{C}^{\mathrm{op}}_I$ to $\mathbb{C}\text{-}\mathrm{mod}$) 
and is defined on morphisms in the natural way.

Let $\sigma$ denote the Chevalley anti-involution on $\mathfrak{g}$.
For a $\mathfrak{g}$-bimodule $X$ we denote by $X^s$ the bimodule
defined as follows: $X^s=X$ as a vector space and for $a,b\in \mathfrak{g}$,
$x\in X$, we have $a\cdot_s x\cdot_s b:=\sigma(b)x\sigma(a)$. We also
denote by $X^*$ the dual bimodule of $X$. If $X$ is a Harish-Chandra
bimodule, we denote by $X^{\circledast}$ the subbimodule of 
$X^*$ consisting of all elements on which the adjoint action of 
$\mathfrak{g}$ is locally finite.

\subsection{Serre functor for category $\mathcal{O}$}\label{s3.2}

The Serre functor for the regular block of the category $\mathcal{O}$
was described in \cite{BBM} and \cite{MS}. The latter also contains
a description of the Serre functor for the regular block of 
$\mathcal{O}^{\mathfrak{p}}$.

\begin{theorem}\label{thm4}
Let $\lambda$ be dominant and regular and $I$ be a primitive 
ideal containing $I_{\lambda}$.  Set $\Delta:=\Delta^I(\lambda)$.
Then for any $P,N\in \mathcal{O}_I$ 
with $P$ projective there is an isomorphism
\begin{displaymath}
\mathrm{Hom}_{\mathfrak{g}}\big(N,
\mathcal{L}(P,\Delta)^{\circledast}\otimes_{U}
\Delta\big)\cong \mathrm{Hom}_{\mathfrak{g}}(P,N)^*,
\end{displaymath}
natural in both $P$ and $N$.
\end{theorem}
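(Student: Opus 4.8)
The plan is to establish the claimed natural isomorphism first for $P$ running over the indecomposable projectives in $\mathcal{O}_I$, and then extend by naturality and additivity. The starting point is the equivalence \eqref{eq2}, under which $\mathcal{O}_I$ is identified with the category $\mathcal{H}_I^1$ of Harish-Chandra bimodules, with $\Delta=\Delta^I(\lambda)$ corresponding to the regular bimodule object. So I would transport everything to $\mathcal{H}_I^1$: the projective $P$ corresponds to some Harish-Chandra bimodule $X_P=\mathcal{L}(\Delta,P)$, the module $N$ to $X_N=\mathcal{L}(\Delta,N)$, and the functor $\mathcal{L}({}_-,\Delta)^{\circledast}\otimes_U\Delta$ becomes an endofunctor of $\mathcal{H}_I^1$ whose value on $X_P$ I need to identify.

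First I would analyze the composite functor. Applying $\mathcal{L}({}_-,\Delta)$ to $P\in\mathcal{O}_I$ and then $\circledast$, one obtains (up to the twist ${}^s$ coming from the Chevalley anti-involution $\sigma$, which intertwines $\circledast$ with the duality $\star$ on $\mathcal{O}$) a Harish-Chandra bimodule that I expect to be naturally isomorphic to $\mathcal{L}(P^{\star},\Delta)$, or equivalently, in bimodule terms, to a suitable dual of $X_P$. The key point is that on a \emph{projective} object the bimodule $\mathcal{L}(P,\Delta)$ is a \emph{projective} object of $\mathcal{H}_I^1$ in the appropriate sense (this is where the equivalence \eqref{eq2} matched projectives to projectives), so its $\circledast$-dual is injective, and tensoring an injective Harish-Chandra bimodule with $\Delta$ over $U$ lands us back at an injective object of $\mathcal{O}_I$. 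Concretely I would aim to show $\mathcal{L}(P,\Delta)^{\circledast}\otimes_U\Delta\cong P^{\star}$, the $\star$-dual of $P$, which is the injective hull of the corresponding simple-top data; the duality $\star$ already restricts to $\mathcal{O}_I$ as noted in the excerpt. Granting this, the left-hand side becomes $\mathrm{Hom}_{\mathfrak{g}}(N,P^{\star})$, and the standard fact that $\mathrm{Hom}(N,P^{\star})\cong\mathrm{Hom}(P,N^{\star\star})^*=\mathrm{Hom}(P,N)^*$ (finite-dimensionality of Hom-spaces in $\mathcal{O}_I$, together with $\star\star\cong\mathrm{Id}$) gives exactly \eqref{eq1}-type isomorphism, natural in both arguments by construction.

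The technical heart, and the step I expect to be the main obstacle, is verifying the isomorphism $\mathcal{L}(P,\Delta)^{\circledast}\otimes_U\Delta\cong P^{\star}$ functorially and checking that all the identifications are genuinely natural, not just natural on isoclasses. Here I would follow the methods of Gorelik \cite{Go4}: the delicate points are (a) commuting $\circledast$ past the tensor product $-\otimes_U\Delta$, which requires understanding $\circledast$ as an ``internal'' duality compatible with the bimodule structure and with the action of projective functors $E\otimes-$; (b) handling the twist by $\sigma$ correctly so that $\circledast$ on the bimodule side matches $\star$ on $\mathcal{O}_I$; and (c) reducing the computation to the dominant projective $P=P(\mu)=$ (the antidominant projective, i.e.\ the big projective-injective) where $\mathcal{L}(P,\Delta)$ is the regular bimodule $\mathcal{H}_I^1$ itself and everything can be computed directly, then propagating via the projective functors using that both sides are right exact and commute with $E\otimes-$. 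Once the isomorphism is pinned down on the dominant object, the fact that functors built from Harish-Chandra bimodule constructions naturally commute with projective functors (in the sense of Subsection~\ref{s2.3}) forces it on all of $\mathcal{O}_I$.

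Finally, once the isomorphism is established for projective $P$, I would observe that both sides of the claimed formula are additive in $P$ and contravariant-exact-compatible enough that naturality in $N$ extends the isomorphism over all of $\mathcal{O}_I$ in the second variable — which is precisely what is needed to feed this into Proposition~\ref{prop1} and conclude Theorem~\ref{tmain}, since $\mathcal{L}({}_-,\Delta)^{\circledast}\otimes_U\Delta$ then agrees with the Nakayama functor $\mathrm{N}$ on projectives and both are right exact.
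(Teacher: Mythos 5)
Your plan hinges on two steps that do not work as stated. The central one is the claim that $\mathcal{L}(P,\Delta)^{\circledast}\otimes_U\Delta\cong P^{\star}$ naturally in $P$. But $P\mapsto\mathcal{L}(P,\Delta)^{\circledast}\otimes_U\Delta$ is a \emph{covariant} functor (the two contravariant operations $\mathcal{L}({}_-,\Delta)$ and ${}_-{}^{\circledast}$ compose to a covariant one, and $\otimes_U\Delta$ is covariant), whereas $P\mapsto P^{\star}$ is contravariant; the two therefore cannot be naturally isomorphic, and no amount of care in "verifying the isomorphism functorially" will fix this. The objectwise agreement is true (both send $P(\mu)$ to the injective hull of $L(\mu)$), but that is far weaker than Theorem~\ref{thm4}, whose entire content is the naturality. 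The second problematic step is the invocation of $\mathrm{Hom}(N,P^{\star})\cong\mathrm{Hom}(P,N)^*$ as a "standard fact": what the duality $\star$ actually gives you is $\mathrm{Hom}(N,P^{\star})\cong\mathrm{Hom}(P,N^{\star})$, and the identification $\mathrm{Hom}(P,N^{\star})\cong\mathrm{Hom}(P,N)^*$ natural in $N$ and $P$ is precisely the Nakayama/Serre property you are trying to prove, so the argument is circular. A smaller slip: the projective on which to pin things down is the dominant Verma $\Delta=\Delta^I(\lambda)$ itself (the dominant object of the fpf-structure, with $\mathcal{L}(\Delta,\Delta)\cong U/I$ the regular bimodule), not the antidominant big projective-injective as you write.

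The paper's proof never tries to name the module $\mathcal{L}(P,\Delta)^{\circledast}\otimes_U\Delta$; instead it proves a purely bimodule-level statement, Proposition~\ref{prop55}, namely a natural isomorphism
$\mathrm{Hom}_{\mathfrak{g}\text{-}\mathfrak{g}}(\mathcal{L}(\Delta,P),\mathcal{L}(\Delta,N))\cong\mathcal{L}(P,\Delta)\otimes_{U\text{-}U}\mathcal{L}(\Delta,N)$,
established for $P=\Delta$ by identifying both sides with $\mathcal{L}(\Delta,N)^{\mathfrak{g}}$ (a dimension count plus the degree-zero Hochschild description of the tensor product), and for $P=E\otimes\Delta$ via the isomorphisms of \eqref{eq59} and the usual adjunctions. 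Theorem~\ref{thm4} then follows from a chain of natural isomorphisms, using the equivalence \eqref{eq2} twice, a double dual, and the observation that bimodule maps $\mathcal{L}(\Delta,N)\to\mathcal{L}(P,\Delta)^*$ automatically land in $\mathcal{L}(P,\Delta)^{\circledast}$. If you want to rescue your route, you should aim to show $\mathcal{L}({}_-,\Delta)^{\circledast}\otimes_U\Delta\cong\mathrm{N}({}_-)$ (the Nakayama functor, not $\star$), prove independently that the left-hand side naturally commutes with projective functors (the paper obtains this later in Corollary~\ref{cor8} from $\circledast$ applied to \eqref{eq59}), compute the value on the dominant object $\Delta$, and then appeal to the uniqueness of right exact functors naturally commuting with projective functors from Subsection~\ref{s2.3}; but that is a genuinely different argument from the one you wrote.
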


\begin{proof}
In the proof we will need the following statement:

\begin{proposition}\label{prop55}
For any $P,N\in \mathcal{O}_I$ 
with $P$ projective there is an isomorphism
\begin{equation}\label{eq56}
\mathrm{Hom}_{\mathfrak{g}\text{-}\mathfrak{g}}
(\mathcal{L}(\Delta,P),\mathcal{L}(\Delta,N))\cong
\mathcal{L}(P,\Delta)
\otimes_{U\text{-}U}\mathcal{L}(\Delta,N)
\end{equation}
natural in both $P$ and $N$ (here $\otimes_{U\text{-}U}$ denotes
the tensor product over $U\otimes U^{\mathrm{op}}$).
\end{proposition}

\begin{proof}
We start with the following observation:

\begin{lemma}\label{lem5}
In the case $P=\Delta$ both sides of \eqref{eq56} are finite 
dimensional vector spaces of the same dimension.
\end{lemma}

\begin{proof}
If $P=\Delta$, then $\mathcal{L}(\Delta,\Delta)\cong U/I$, 
see \cite[6.9]{Ja}, and the
left hand side of \eqref{eq56} (which is clearly finite dimensional) 
is isomorphic to  $\mathcal{L}(\Delta,N)^{\mathfrak{g}}$, 
the set of $\mathfrak{g}$-invariants of $\mathcal{L}(\Delta,N)$, see \cite[Lemma~2.2]{BG}.  Further, we have the inclusion
\begin{displaymath}
\begin{array}{rcl}
U/I
\otimes_{U\text{-}U}\mathcal{L}(\Delta,N)&\subset&
\mathrm{Hom}_{\mathbb{C}}\big(U/I
\otimes_{U\text{-}U}\mathcal{L}(\Delta,N),\mathbb{C}\big)^*\\
\text{(by adjunction)}&\cong&
\mathrm{Hom}_{\mathfrak{g}\text{-}\mathfrak{g}}
\big(U/I,\mathcal{L}(\Delta,N)^*\big)^*\\
&\cong&
\mathrm{Hom}_{\mathfrak{g}\text{-}\mathfrak{g}}
\big(U/I,\mathcal{L}(\Delta,N)^{\circledast}\big)^*\\
\text{(by the above)}&\cong&
\big((\mathcal{L}(\Delta,N)^{\circledast})^{\mathfrak{g}}\big)^*\\
&\cong&\mathcal{L}(\Delta,N)^{\mathfrak{g}},
\end{array}  
\end{displaymath}
where the third step is justified by the facts that 
the image of any homomorphism from $U/I$ to $\mathcal{L}(\Delta,N)^*$ 
belongs  to $\mathcal{L}(\Delta,N)^{\circledast}$, and the fifth step 
is justified by the fact that 
$(\mathcal{L}(\Delta,N)^{\circledast})^{\mathfrak{g}}\cong 
(\mathcal{L}(\Delta,N)^{\mathfrak{g}})^*$
as the canonical non-degenerate pairing 
$\mathcal{L}(\Delta,N)\times 
\mathcal{L}(\Delta,N)^{\circledast}\to\mathbb{C}$ restricts to a 
non-degenerate pairing 
$\mathcal{L}(\Delta,N)^{\mathfrak{g}}\times 
(\mathcal{L}(\Delta,N)^{\circledast})^{\mathfrak{g}}\to \mathbb{C}$. 
Since the vector space $\mathcal{L}(\Delta,N)^{\mathfrak{g}}$ is 
finite dimensional, the original inclusion 
\begin{displaymath}
U/I
\otimes_{U\text{-}U}\mathcal{L}(\Delta,N)\subset
\mathrm{Hom}_{\mathbb{C}}\big(U/I
\otimes_{U\text{-}U}\mathcal{L}(\Delta,N),\mathbb{C}\big)^* 
\end{displaymath}
is, in fact, an isomorphism. The claim follows.
\end{proof} 

Write $\mathcal{L}(\Delta,N)\cong \mathcal{L}(\Delta,N)^{\mathfrak{g}}\oplus
X$, where $X$ is stable with respect to the adjoint action of 
$\mathfrak{g}$. In the case $P=\Delta$ we have 
$\mathcal{L}(\Delta,\Delta)=U/I$ and $U/I\otimes_{U-U}\mathcal{L}(\Delta,N)$
is the $0$-degree Hochschild homology of $\mathcal{L}(\Delta,N)$, which
equals 
\begin{equation}\label{eq57}
U/I\otimes_{U-U}\mathcal{L}(\Delta,N)\cong 
\mathcal{L}(\Delta,N)/\mathcal{K}\mathcal{L}(\Delta,N),
\end{equation}
where
\begin{displaymath}
\mathcal{K}\mathcal{L}(\Delta,N):=
\{uv-vu:u\in U/I, v\in \mathcal{L}(\Delta,N)\}.
\end{displaymath}
As an adjoint module, $X$ is a direct sum of finite-dimensional simple
modules, none of which is isomorphic to the trivial module. Therefore
$X\subset \mathcal{K}\mathcal{L}(\Delta,N)$. From
Lemma~\ref{lem5} it follows that $X=\mathcal{K}\mathcal{L}(\Delta,N)$
comparing the dimensions. Composing \eqref{eq57} with the 
projection of  $\mathcal{L}(\Delta,N)$ onto 
$\mathcal{L}(\Delta,N)^{\mathfrak{g}}$ along $X$ we obtain the following 
natural isomorphism:
\begin{equation}\label{eq58}
U/I\otimes_{U-U}\mathcal{L}(\Delta,N)\cong 
\mathcal{L}(\Delta,N)/\mathcal{K}\mathcal{L}(\Delta,N)\cong
\mathcal{L}(\Delta,N)^{\mathfrak{g}}.
\end{equation}

Consider now the case where $P=E\otimes \Delta$ for some finite-dimensional
$\mathfrak{g}$-module $E$. By \cite[6.2]{Ja} we have canonical isomorphisms
\begin{equation}\label{eq59}
\mathcal{L}(E\otimes \Delta,\Delta)\cong
\mathcal{L}(\Delta,\Delta)\otimes E^*,\quad\quad
\mathcal{L}(\Delta,E\otimes \Delta)\cong
E\otimes \mathcal{L}(\Delta,\Delta).
\end{equation}

Denote by $\check{E}$ the right 
$\mathfrak{g}$-module defined as follows: $\check{E}=E$ as a vector 
space and $v\,\check{\cdot}\,g:=-gv$ for $v\in E$ and 
$g\in \mathfrak{g}$ (and similarly for going from right to left modules).
Then the usual adjunction (see \cite[2.1(d)]{BG}) gives,  
by restriction, the isomorphism
\begin{equation}\label{eq6}
\mathrm{Hom}_{\mathfrak{g}\text{-}\mathfrak{g}}
(E\otimes U/I,X)\cong
\mathrm{Hom}_{\mathfrak{g}\text{-}\mathfrak{g}}
(U/I,\check{E}^{*}\otimes X).
\end{equation}

A straightforward computation also shows that 
\begin{equation}\label{eq5}
(U/I\otimes E^{*})
\otimes_{U\text{-}U} X\cong
U/I\otimes_{U\text{-}U} 
(\check{E}^{*}\otimes X)
\end{equation}
via $u\otimes v\otimes x\mapsto u\otimes v\otimes x$.

Combining \eqref{eq58}, \eqref{eq5} and \eqref{eq6} we
obtain a required isomorphism in the case $P=E\otimes \Delta$.
Now the claim follows from additivity of all functors and
naturality of all constructions.
\end{proof} 
 
The theorem is now proved via the following chain 
of natural isomorphisms:
\begin{displaymath}
\begin{array}{rcl} 
\mathrm{Hom}_{\mathfrak{g}}(P,N)&\overset{\eqref{eq2}}{\cong}&
\mathrm{Hom}_{\mathfrak{g}\text{-}\mathfrak{g}}
(\mathcal{L}(\Delta,P),\mathcal{L}(\Delta,N))\\
\text{(by Proposition~\ref{prop55})}&\cong&
\mathcal{L}(P,\Delta)
\otimes_{U\text{-}U}\mathcal{L}(\Delta,N)\\
\text{(taking the double dual)}&\cong&
\mathrm{Hom}_{\mathbb{C}}\big(\mathcal{L}(P,\Delta)
\otimes_{U\text{-}U}
\mathcal{L}(\Delta,N),\mathbb{C}\big)^*\\
\text{(by adjunction)}&\cong&
\mathrm{Hom}_{\mathfrak{g}\text{-}\mathfrak{g}}\big(\mathcal{L}(\Delta,N),
\mathcal{L}(P,\Delta)^*\big)^*\\
&\cong&\mathrm{Hom}_{\mathfrak{g}\text{-}\mathfrak{g}}
\big(\mathcal{L}(\Delta,N),
\mathcal{L}(P,\Delta)^{\circledast}\big)^*\\
\text{(by \eqref{eq2})}&{\cong}&
\mathrm{Hom}_{\mathfrak{g}}\big(N,
\mathcal{L}(P,\Delta)^{\circledast}\otimes_{U}
\Delta\big)^*
\end{array}
\end{displaymath}
Here taking the double dual is justified by the fact that the vector
space in question is finite dimensional (see Lemma~\ref{lem5}), 
and the penultimate step
is justified by the fact that the image of every morphism 
from $\mathcal{L}(\Delta,N)$ to $\mathcal{L}(P,\Delta)^*$
belongs to $\mathcal{L}(P,\Delta)^{\circledast}$.
\end{proof}

\begin{corollary}\label{cor7}
Assume that $\lambda$, $I$ and $\Delta$ are as above. 
\begin{enumerate}[$($a$)$]
\item\label{cor7.1} The functor
$\mathcal{L}({}_-,\Delta)^{\circledast}\otimes_{U}\Delta$
is isomorphic to the Nakayama functor on $\mathcal{O}_I$.
\item\label{cor7.2} If we additionally assume that all injective 
modules in $\mathcal{O}_I$ have finite projective dimension, then
the left derived of the functor
$\mathcal{L}({}_-,\Delta)^{\circledast}\otimes_{U}\Delta$
is a Serre functor on $\cP(\mathcal{C}_I)$.
\end{enumerate}
\end{corollary}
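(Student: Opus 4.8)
The plan is to deduce Corollary~\ref{cor7} from Theorem~\ref{thm4} and the generalities of Section~\ref{s1}, essentially without new computation. Write $\mathrm{G}:=\mathcal{L}({}_-,\Delta)^{\circledast}\otimes_{U}\Delta$ and recall $\mathcal{O}_I\simeq\mathcal{C}_I\text{-}\mathrm{mod}$. The first, and essentially only, point requiring genuine care is to check that $\mathrm{G}$ is a well-defined right-exact endofunctor of $\mathcal{O}_I$. For this: if $M\in\mathcal{O}_I$ then $\mathcal{L}(M,\Delta)$ is a Harish-Chandra bimodule (by \cite[Kapitel~6]{Ja}) whose left action is annihilated by $I$, because $\Delta=\Delta^I(\lambda)$ satisfies $I\Delta=0$; hence the right action on $\mathcal{L}(M,\Delta)^{\circledast}$ is annihilated by $I$, so $\mathcal{L}(M,\Delta)^{\circledast}\in\mathcal{H}_I^1$ and the equivalence \eqref{eq2} puts $\mathrm{G}M=\mathcal{L}(M,\Delta)^{\circledast}\otimes_U\Delta$ back into $\mathcal{O}_I$. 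Right-exactness of $\mathrm{G}$ then follows by inspecting its three constituent functors: $\mathcal{L}({}_-,\Delta)$ is left exact (a contravariant subfunctor of the exact $\mathrm{Hom}_{\mathbb{C}}({}_-,\Delta)$), the restricted duality ${}_-{}^{\circledast}$ is an exact anti-equivalence on the relevant bimodules, and ${}_-\otimes_U\Delta$ is exact on $\mathcal{H}_I^1$ by \eqref{eq2}.

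Granting this, part~(a) is immediate from Theorem~\ref{thm4}: for projective $P\in\mathcal{O}_I$ and arbitrary $N\in\mathcal{O}_I$ the theorem gives a natural isomorphism $\mathrm{Hom}_{\mathfrak{g}}(N,\mathrm{G}P)\cong\mathrm{Hom}_{\mathfrak{g}}(P,N)^{*}$, while the Nakayama functor satisfies $\mathrm{Hom}_{\mathfrak{g}}(N,\mathrm{N}P)\cong\mathrm{Hom}_{\mathfrak{g}}(P,N)^{*}$ by the standard adjunction (the computation in the proof of Proposition~\ref{prop1}, read for modules rather than complexes). By Yoneda, $\mathrm{G}P\cong\mathrm{N}P$, naturally in $P$; since $\mathrm{G}$ and $\mathrm{N}$ are both right exact and agree on projectives, $\mathrm{G}\cong\mathrm{N}$.

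Part~(b) is then purely formal: left derived functors of isomorphic right-exact functors are isomorphic, so $\cL\mathrm{G}\cong\cL\mathrm{N}$ on $\mathcal{D}^{-}(\mathcal{C}_I)$; the additional hypothesis of part~(b) is precisely the hypothesis of Proposition~\ref{prop1} for $\mathcal{C}=\mathcal{C}_I$, whence $\cL\mathrm{N}$, and with it $\cL\mathrm{G}$, is a Serre functor on $\cP(\mathcal{C}_I)$. The main obstacle is the verification in the first paragraph that $\mathrm{G}$ genuinely maps $\mathcal{O}_I$ into itself and is right exact; everything after that is bookkeeping on top of Theorem~\ref{thm4} and the results of Section~\ref{s1}.
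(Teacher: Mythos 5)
Your proposal is correct and matches what the paper implicitly intends: the paper gives Corollary~\ref{cor7} no written proof, treating it as an immediate consequence of Theorem~\ref{thm4} together with Proposition~\ref{prop1}, and your argument supplies exactly those details. The one thing worth saying explicitly is that the key reduction in part~(a) — that two right-exact endofunctors agreeing naturally on projectives are isomorphic — is the same principle the paper already uses in the proof of Proposition~\ref{prop2}, so you are on firm ground invoking it; and your verification that $\mathrm{G}$ lands in $\mathcal{O}_I$ (via $\mathcal{L}(M,\Delta)^{\circledast}\in\mathcal{H}_I^1$ and the equivalence~\eqref{eq2}) and is right exact is a genuine, if minor, gap-filling service, as the paper passes over it silently.
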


We would like to point out that the hypothesis of 
Corollary~\ref{cor7}\eqref{cor7.2} 
is satisfied if $\mathcal{O}_I$ is a direct summand of
the usual or parabolic category $\mathcal{O}$.
Note that $\mathcal{O}_I$ is a direct summand of $\mathcal{O}$
if $I$ is a minimal primitive ideal. Furthermore, $\mathcal{O}_I$ 
is a direct summand of some parabolic $\mathcal{O}$ if $I$ is 
the annihilator of some dominant parabolic Verma module.

\subsection{Alternative descriptions and applications}\label{s3.3}

\begin{corollary}\label{cor8}
Under the assumptions of Corollary~\ref{cor7}\eqref{cor7.1} 
the Nakayama functor on $\mathcal{O}_I$ naturally commutes with 
projective functors.
\end{corollary}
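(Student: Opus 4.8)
The plan is to exhibit the functor $\mathrm{G}:=\mathcal{L}({}_-,\Delta)^{\circledast}\otimes_U\Delta$ explicitly as a composite of elementary functors each of which is already known to naturally commute with projective functors, and then to invoke the fact (recalled in Subsection~\ref{s2.3}) that natural commutation is preserved under composition. Under the equivalence \eqref{eq2}, the Nakayama functor on $\mathcal{O}_I$ is intertwined with an endofunctor of the category $\mathcal{H}_I^1$ of Harish-Chandra bimodules; the computation in Proposition~\ref{prop55} (really \eqref{eq58} together with \eqref{eq5}--\eqref{eq6}) shows precisely that on $\mathcal{H}_I^1$ this endofunctor is, up to natural isomorphism, $Y\mapsto \mathcal{L}(P,\Delta)^{\circledast}\otimes_{U\text{-}U}Y$ evaluated appropriately, i.e. a ``multiplication'' functor built from the bimodule structure. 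So the first step is to track, through \eqref{eq2}, what $\mathrm{G}$ becomes on the Harish-Chandra side and record that it is given by tensoring with a fixed Harish-Chandra bimodule (on one side) together with the ${}^{\circledast}$-duality.

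Next I would use the standard compatibility of projective functors with the equivalence \eqref{eq2}: under \eqref{eq2}, the functor $E\otimes{}_-$ on $\mathcal{O}_I$ corresponds to $E\otimes{}_-$ (tensoring on the left) on $\mathcal{H}_I^1$, because $\mathcal{L}(\Delta,E\otimes N)\cong E\otimes\mathcal{L}(\Delta,N)$ by \eqref{eq59}. Hence it suffices to prove that the Harish-Chandra-side description of $\mathrm{G}$ commutes naturally with left tensoring by $E$. The key point is that left multiplication by $E$ and the operation coming from $\mathrm{G}$ (right tensoring against $\mathcal{L}(P,\Delta)^{\circledast}$ composed with ${}^{\circledast}$) act on different ``slots'' of the bimodule, so they commute on the nose; the isomorphisms \eqref{eq59}, \eqref{eq5}, \eqref{eq6} assembled in the proof of Proposition~\ref{prop55} are exactly the natural transformations $\eta_E:(E\otimes{}_-)\circ\mathrm{G}\Rightarrow\mathrm{G}\circ(E\otimes{}_-)$ one needs, and one checks that for a morphism $\alpha\in\mathcal{F}(E\otimes{}_-,E'\otimes{}_-)$ — which by \eqref{fpf4} and the classification of morphisms between projective functors is induced by a map of $\mathfrak{g}$-modules $E\to E'$ together with an element of the centre — the requisite square commutes. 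This reduces to the naturality assertions already contained in Theorem~\ref{thm4} and Proposition~\ref{prop55}, transported back along \eqref{eq2}.

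Alternatively, and perhaps more cleanly, one can argue abstractly: by Corollary~\ref{cor7}\eqref{cor7.1} the functor $\mathrm{G}$ is the Nakayama functor $\mathrm{N}$, hence $\mathrm{N}\cong\mathrm{Hom}_{\mathcal{C}_I}({}_-,\mathcal{C}_I)^*$, and the isomorphism of Theorem~\ref{thm4}, $\mathrm{Hom}_{\mathfrak{g}}(N,\mathrm{N}P)\cong\mathrm{Hom}_{\mathfrak{g}}(P,N)^*$, is natural in both variables. For a projective functor $\mathrm{F}=E\otimes{}_-$ with biadjoint $\mathrm{F}^{!}=E^*\otimes{}_-$ (which again lies in $\mathcal{F}$), one obtains, for projective $P$ and arbitrary $N$, a chain $\mathrm{Hom}_{\mathfrak{g}}(N,\mathrm{F}\mathrm{N}P)\cong\mathrm{Hom}_{\mathfrak{g}}(\mathrm{F}^{!}N,\mathrm{N}P)\cong\mathrm{Hom}_{\mathfrak{g}}(P,\mathrm{F}^{!}N)^*\cong\mathrm{Hom}_{\mathfrak{g}}(\mathrm{F}P,N)^*\cong\mathrm{Hom}_{\mathfrak{g}}(N,\mathrm{N}\mathrm{F}P)$, all natural, and Yoneda on $N$ produces an isomorphism $\mathrm{F}\circ\mathrm{N}\cong\mathrm{N}\circ\mathrm{F}$ on projectives, which extends to all of $\mathcal{O}_I$ by right exactness. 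The remaining work is to verify the coherence square of Subsection~\ref{s2.3}, i.e. that these Yoneda-produced isomorphisms are compatible with morphisms $\alpha\in\mathcal{F}(\mathrm{F}_i,\mathrm{F}_j)$; this follows because every step in the chain is natural with respect to such $\alpha$ (the adjunction units/counits and the Serre-type pairing are natural in the functor argument as well).

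The main obstacle I anticipate is the coherence check — confirming that the family of isomorphisms $\{\eta_i\}$ is not merely a pointwise collection but genuinely makes the pentagon/square of Subsection~\ref{s2.3} commute for all $\alpha\in\mathcal{F}(\mathrm{F}_i,\mathrm{F}_j)$. In the abstract approach this hinges on the naturality of the Serre pairing \eqref{eq1} in the functor slot, which is not literally part of the definition of a Serre functor and must be extracted from the construction in Theorem~\ref{thm4}; in the concrete approach it amounts to unwinding the identifications \eqref{eq59}, \eqref{eq5}, \eqref{eq6} and checking they are compatible with arbitrary bimodule maps between the $E\otimes U/I$. Either way the verification is essentially bookkeeping, and since by \cite{Kh} a functor naturally commuting with projective functors is determined up to isomorphism by its value on $\Delta$, it is enough to pin down this compatibility on the single object $\Delta$, which considerably shortens the argument.
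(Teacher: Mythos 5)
Your first, concrete approach is in spirit the paper's own argument, only more elaborate: the paper simply applies ${}^{\circledast}$ to the left isomorphism in \eqref{eq59}, $\mathcal{L}(E\otimes\Delta,\Delta)\cong\mathcal{L}(\Delta,\Delta)\otimes E^*$, to conclude that $\mathcal{L}({}_-,\Delta)^{\circledast}\otimes_U\Delta$ commutes with $E\otimes{}_-$, and then asserts that the naturality square of Subsection~\ref{s2.3} follows directly from the definition of projective functors. Your second, abstract route via the Serre-type pairing of Theorem~\ref{thm4}, the biadjunction $\mathrm{F}\dashv\mathrm{F}^!\dashv\mathrm{F}$, and Yoneda is a genuinely different argument that also works; it sidesteps the bimodule identities \eqref{eq59}, \eqref{eq5}, \eqref{eq6} at the cost of having to check naturality of the Serre pairing in the functor slot. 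A minor notational correction: the biadjoint of $E\otimes{}_-$ in the paper's conventions is $\check{E}^*\otimes{}_-$, not $E^*\otimes{}_-$. Your identification of the coherence square as the real content of ``naturally commutes'' is apt — the paper dismisses it in one clause — and your closing observation that, by \cite{Kh}, a functor naturally commuting with projective functors is pinned down by its value on the dominant object $\Delta$ is precisely the reduction that justifies the paper's brevity.
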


\begin{proof}
By Corollary~\ref{cor7}\eqref{cor7.1}, the Nakayama functor
on $\mathcal{O}_I$ is isomorphic to the functor
$\mathcal{L}({}_-,\Delta)^{\circledast}\otimes_{U}\Delta$.
Applying ${}_-{}^{\circledast}$ to the left isomorphism in
\eqref{eq59} we get
that $\mathcal{L}({}_-,\Delta)^{\circledast}\otimes_{U}\Delta$
commutes with projective functors and naturality follow from
the definition of projective functors.
\end{proof}

Under the assumptions of Corollary~\ref{cor7}\eqref{cor7.1} 
consider the endofunctor $\mathrm{C}$ of $\mathcal{O}_I$
of {\em partial coapproximation} with respect to projective-injective
modules, see \cite[2.5]{KhMa}. This functor is the unique 
(up to isomorphism) right
exact functor which sends a projective module $P$ to the submodule of
$P$ generated by images of all possible morphisms from projective-injective
modules to $P$ and acts on morphisms via restriction.

The module $\Delta=\Delta^I(\lambda)$ has, 
by \cite[Corollary~3]{Ma}, simple socle which we denote by 
$K=K^I(\lambda)$ (it is the only simple subquotient of
$\Delta$ of maximal Gelfand-Kirillov (GK) dimension). 
We have a canonical
embedding $\varphi:U/\mathrm{Ann}_{U}K
\hookrightarrow \mathcal{L}(K,K)$. The question of surjectivity of 
$\varphi$ is known as {\em Kostant's problem} for $K$ (see \cite{KM}
and references therein). It is known (see \cite[4.1]{KM}) that 
Kostant's problem has a positive answer in the case when 
$\mathcal{O}_{I}$ is the usual or parabolic category $\mathcal{O}$
(and in many other cases as well).

\begin{corollary}\label{cor9}
Assume that $\lambda$, $I$ and $\Delta$ are as above
and that $\varphi$ is surjective. Then the 
Nakayama functor on $\mathcal{O}_I$ is isomorphic to $\mathrm{C}^2$.
\end{corollary}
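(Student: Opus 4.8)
\textbf{Proof proposal for Corollary~\ref{cor9}.}

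The plan is to identify the Nakayama functor $\mathrm{N}\cong\mathcal{L}({}_-,\Delta)^{\circledast}\otimes_U\Delta$ on $\mathcal{O}_I$ with $\mathrm{C}^2$ by showing that both are right exact endofunctors of $\mathcal{O}_I$ that agree (together with their action on morphisms) on the projective generator, hence are isomorphic. Since $\mathrm{N}$ is right exact by construction and $\mathrm{C}$ is right exact by definition, $\mathrm{C}^2$ is right exact, so it suffices to produce a natural isomorphism on the full subcategory of projectives. By additivity and because every projective in $\mathcal{O}_I$ is a summand of some $E\otimes\Delta$, it is enough to treat the dominant object $\Delta=\Delta^I(\lambda)$ itself, then invoke the fact (Subsection~\ref{s2.3}) that a functor naturally commuting with projective functors is determined by its value on $\Delta$; Corollary~\ref{cor8} gives exactly that $\mathrm{N}$ naturally commutes with projective functors, and $\mathrm{C}$ visibly does as well (partial coapproximation with respect to projective-injectives commutes with $E\otimes{}_-$ because $E\otimes{}_-$ is exact and sends projective-injectives to projective-injectives).

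First I would compute $\mathrm{C}\,\Delta$. The module $\Delta$ has simple socle $K=K^I(\lambda)$, which is the unique simple subquotient of maximal GK-dimension, and $K$ is the socle of a (unique up to isomorphism) indecomposable projective-injective module $P(K)$ in $\mathcal{O}_I$; by the defining property of $\mathrm{C}$, $\mathrm{C}\,\Delta$ is the submodule of $\Delta$ generated by the images of all maps $P(K)\to\Delta$. One shows $\mathrm{C}\,\Delta$ is itself projective-injective: in fact I expect $\mathrm{C}\,\Delta\cong P(K)$, using that $\Delta$ is a quotient of $P(K)$ (both have highest weight $\lambda$ and $P(K)$ has a Verma flag starting with $\Delta$ at the top) together with the surjectivity hypothesis on $\varphi$, which is what guarantees that the trace of projective-injectives in $\Delta$ is as large as possible. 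Concretely, Kostant's problem having a positive answer for $K$ is equivalent (via \cite{KM,Ma} and the Harish-Chandra bimodule picture) to $\mathrm{C}\,\Delta$ being the projective cover of $\Delta$ in the category of projective-injective modules, i.e. $\mathrm{C}\,\Delta\cong P(K)$.

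Next, applying $\mathrm{C}$ again: since $\mathrm{C}\,\Delta\cong P(K)$ is already projective-injective, $\mathrm{C}$ acts on it as the identity on objects, but one must be careful — $\mathrm{C}^2\Delta$ is the submodule of $\mathrm{C}\,\Delta=P(K)$ generated by images of maps from projective-injectives, which is all of $P(K)$. So $\mathrm{C}^2\Delta\cong P(K)$. On the other hand, $\mathrm{N}\,\Delta=\mathcal{L}(\Delta,\Delta)^{\circledast}\otimes_U\Delta\cong(U/I)^{\circledast}\otimes_U\Delta$, and by the same circle of ideas (the self-duality $(U/I)^\circledast$ corresponds under \eqref{eq2} to the dual of $\Delta$, whose projective cover is $P(K)$, together with the fact that $\mathrm{N}$ sends the projective cover of $L$ to the injective hull of $L$) one gets $\mathrm{N}\,\Delta\cong P(K)$ as well; here the surjectivity of $\varphi$ enters again to pin down that $\mathcal{L}(K,K)=U/\mathrm{Ann}_U K$ so that the relevant bimodule computations on $\mathcal{L}(\Delta,\Delta)$ restrict correctly to the socle. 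Finally I would check that the two identifications $\mathrm{N}\,\Delta\cong P(K)\cong\mathrm{C}^2\Delta$ are compatible with the respective natural transformations $\alpha_{\mathrm{G}}$ indexed by $\alpha\in\mathcal{F}(E\otimes{}_-,E'\otimes{}_-)$, so that the isomorphism propagates to all of $\mathcal{O}_I$ by uniqueness of functors naturally commuting with projective functors.

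\textbf{Main obstacle.} The crux is the step $\mathrm{C}\,\Delta\cong P(K)$, i.e. extracting from the positive answer to Kostant's problem for $K$ the precise statement that the projective-injective trace in $\Delta$ is the full projective cover $P(K)$ rather than a proper submodule. This is where the hypothesis on $\varphi$ is genuinely used and where one needs the bimodule dictionary of \cite{KM,KhMa,Ma} most carefully; everything downstream (computing $\mathrm{C}^2$, matching with $\mathrm{N}$, checking naturality) is then comparatively formal, relying on right exactness, additivity, and the rigidity of functors commuting with projective functors established in Section~\ref{s2}.
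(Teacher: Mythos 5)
Your overall strategy agrees with the paper's: both reduce the identification $\mathrm{N}\cong\mathrm{C}^2$ to computing both functors on the dominant object $\Delta$ and then invoking uniqueness of functors naturally commuting with projective functors. However, the actual computation goes wrong at the first and decisive step.

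You claim $\mathrm{C}\,\Delta\cong P(K)$, the projective cover of the simple socle $K$. This is incorrect: by \cite[3.2]{Ma}, the trace of projective-injectives in the dominant projective $\Delta$ is precisely the simple socle $K$ itself, so $\mathrm{C}\,\Delta\cong K$, not $P(K)$. Your justification that ``$\Delta$ is a quotient of $P(K)$'' is also false: $P(K)$ has simple top $K$, so every quotient of $P(K)$ has top $K$, whereas $\Delta$ has top the simple highest weight module $L(\lambda)$, which is generally not $K$ (for $\mathfrak{sl}_2$ with $\lambda=0$: $K=L(-2)$, $P(K)=P(-2)$, and $\Delta(0)$ has top $L(0)$, so it is not a quotient of $P(-2)$). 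As a consequence your second step --- that $\mathrm{C}$ acts as the identity on $\mathrm{C}\,\Delta$ because it is projective-injective --- does not apply, since $\mathrm{C}\,\Delta=K$ is simple and in general neither projective nor injective. The second application of $\mathrm{C}$ is exactly where the hypothesis on $\varphi$ enters: via \cite[Corollary~12]{Ma} one gets a two-step coresolution $\Delta\hookrightarrow X_0\to X_1$ by projective-injectives, applies $\star$ to obtain a projective presentation of $\Delta^\star$ by projective-injectives, and uses right exactness of $\mathrm{C}$ (together with the fact that $\mathrm{C}$ kills the kernel of $\Delta^\star\twoheadrightarrow K$) to conclude $\mathrm{C} K\cong\mathrm{C}\,\Delta^\star\cong\Delta^\star$.

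Finally, your identification $\mathrm{N}\,\Delta\cong P(K)$ is also incorrect. Since $\Delta$ is the projective cover of its top $L(\lambda)$, the Nakayama functor sends it to the injective hull $I(L(\lambda))$, which equals $\Delta^\star$ (as $\Delta$ is projective, $\Delta^\star$ is injective with socle $L(\lambda)^\star\cong L(\lambda)$), and in general $\Delta^\star\not\cong P(K)$; indeed $\mathrm{N}\,\Delta$ need not be projective at all, again visible already for $\mathfrak{sl}_2$. So the target of the computation should be $\Delta^\star$, not $P(K)$, and the matching $\mathrm{C}^2\Delta\cong\mathrm{N}\,\Delta$ must be established as $\mathrm{C}^2\Delta\cong\Delta^\star\cong\mathrm{N}\,\Delta$ along the lines above.
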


\begin{proof}
The fact that functor $\mathrm{C}$ (and hence also $\mathrm{C}^2$) 
naturally commutes  with projective functors repeats verbatim the proof of 
\cite[Claim~2, Page~153]{MS}. The Nakayama functor
naturally commutes with projective functors by Corollary~\ref{cor8}.
Therefore to complete the proof we only need to show that 
$\mathrm{C}^2$ maps $\Delta$ to the corresponding injective module.

First we claim that $\mathrm{C}\Delta\cong K$, where $K$ is the
simple socle of $\Delta$ (as in Subsection~\ref{s3.2}).
Indeed, $\Delta$ is projective and hence $\mathrm{C}\Delta$ coincides 
with the trace of projective-injective modules in $\Delta$. 
By \cite[3.2]{Ma}, this trace coincides with $K$.

By \cite[Corollary~12]{Ma}, the surjectivity of $\varphi$ is equivalent
to the existence of a two step resolution $\Delta\hookrightarrow X_0\to X_1$
with $X_0$ and $X_1$ projective-injective. Applying $\star$ we obtain
\begin{equation}\label{eq7}
X_1^{\star}\to X_0^{\star}\tto \Delta^{\star}
\end{equation}
where both $X_0^{\star}$ and $X_1^{\star}$ are again projective-injective.
As the kernel of the natural projection 
$\Delta^{\star}\tto K^{\star}\cong K$ is killed by $\mathrm{C}$, it 
follows that  $\mathrm{C} K\cong \mathrm{C}\Delta^{\star}$
and from \eqref{eq7} we obtain $\mathrm{C}\Delta^{\star}\cong\Delta^{\star}$,
which is exactly what we needed.
\end{proof}

\begin{remark}\label{rem10}
{\rm  
If $\mathcal{O}_I$ is a direct summand of the usual or parabolic 
category $\mathcal{O}$ one can give yet another description for
the Nakayama and Serre functors in terms of Arkhipov's twisting
functors, see \cite{MS} for details.
}
\end{remark}

Fix one representative in every isomorphism class of indecomposable
projective-injective objects in $\mathcal{O}_I$ and denote by 
$\mathcal{P}_I$ the full subcategory of $\mathcal{O}_I$ generated
by these fixed objects. The category $\mathcal{P}_I$ is an
slf-category.

\begin{corollary}\label{cor11}
Under the assumptions of Corollary~\ref{cor9}
the category $\mathcal{P}_I$ is symmetric.
\end{corollary}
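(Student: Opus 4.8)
The plan is to deduce Corollary~\ref{cor11} from Proposition~\ref{prop2} together with the preceding corollaries. Recall that $\mathcal{P}_I$ is by construction an slf-category whose objects are the indecomposable projective-injective modules in $\mathcal{O}_I$, so $\mathcal{P}_I\text{-}\mathrm{mod}$ is (up to the equivalence of Section~\ref{s1.2}) the module category of the basic algebra $B:=\mathrm{End}_{\mathfrak{g}}(\bigoplus_i P_i)^{\mathrm{op}}$ of the basic projective-injective module. By Proposition~\ref{prop2}, $\mathcal{P}_I$ is symmetric if and only if the Serre functor on $\mathcal{P}(\mathcal{P}_I)$ is isomorphic to the identity; since $B$ has finite global dimension (all injectives in $\mathcal{P}_I\text{-}\mathrm{mod}$ have finite projective dimension, $B$ being finite-dimensional and — because $\mathcal{P}_I$ consists of projective-injectives of $\mathcal{O}_I$ — in fact self-injective-adjacent, but in any case the hypothesis of Proposition~\ref{prop2} holds), it suffices to produce an isomorphism of the Nakayama functor $\mathrm{N}_{\mathcal{P}_I}$ with the identity on projectives.

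The key step is to relate the Nakayama functor of $\mathcal{P}_I$ to that of $\mathcal{O}_I$ via the partial coapproximation functor $\mathrm{C}$. First I would recall from \cite{KhMa} (as already invoked in the proof of Corollary~\ref{cor9}) that $\mathrm{C}$ restricts to an equivalence between the additive category of projective-injective modules in $\mathcal{O}_I$ — equivalently $\mathcal{P}_I$ viewed inside $\mathcal{O}_I$ — onto itself that implements, on that subcategory, the comparison between the two Nakayama functors: concretely, the projective-injective modules of $\mathcal{O}_I$ are exactly the modules $\mathrm{C}$ of projective modules, and $\mathrm{C}$ identifies the Serre/Nakayama structure. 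By Corollary~\ref{cor9}, the Nakayama functor on $\mathcal{O}_I$ is $\mathrm{C}^2$, and $\mathrm{C}$ maps projectives to projective-injectives with $\mathrm{C}\Delta\cong K$ and $\mathrm{C}^2\Delta\cong$ the injective hull of $\Delta$, which (as $\Delta$ is also injective in this situation? no — rather) is the projective-injective envelope. The upshot I want is: on the subcategory $\mathcal{P}_I$ the functor $\mathrm{C}^2$ is isomorphic to the identity, because $\mathrm{C}$ acts as an auto-equivalence of $\mathcal{P}_I$ whose square is forced to be trivial by the self-duality $\star$ (which restricts to $\mathcal{P}_I$ and fixes each projective-injective up to iso, since projective-injectives in $\mathcal{O}_I$ are $\star$-self-dual), exactly as in the computation $\mathrm{C}\Delta^{\star}\cong\Delta^{\star}$ at the end of the proof of Corollary~\ref{cor9}.

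More directly, one can argue as follows. The Nakayama functor of $\mathcal{P}_I$ sends the indecomposable projective $\mathcal{P}_I(x,{}_-)$ to the indecomposable injective with the same simple socle, and since $\mathcal{P}_I\text{-}\mathrm{mod}\simeq B\text{-}\mathrm{mod}$ with $B$ the endomorphism algebra of the basic projective-injective of $\mathcal{O}_I$, this amounts to computing the Nakayama permutation of $B$. That permutation coincides with the one induced by $\mathrm{C}^2=\mathrm{N}_{\mathcal{O}_I}$ on the projective-injective modules, because $\mathrm{Hom}_{\mathfrak{g}}$-spaces between projective-injectives in $\mathcal{O}_I$ literally are the morphism spaces of $\mathcal{P}_I$, and the Serre-duality isomorphism of Corollary~\ref{cor7}\eqref{cor7.2} restricts to the full subcategory $\mathcal{P}(\mathcal{P}_I)\hookrightarrow\mathcal{P}(\mathcal{C}_I)$ generated by these objects. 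Since $\mathrm{N}_{\mathcal{O}_I}\cong\mathrm{C}^2$ fixes each indecomposable projective-injective of $\mathcal{O}_I$ (by $\mathrm{C}\Delta^\star\cong\Delta^\star$ and, applying projective functors together with the fact that $\mathrm{C}$ commutes with them, for every indecomposable projective-injective), the Nakayama permutation of $B$ is trivial and moreover $\mathrm{N}_{\mathcal{P}_I}$ is isomorphic to the identity on the projective generator. By Proposition~\ref{prop2}, $\mathcal{P}_I$ is symmetric.

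The main obstacle I anticipate is making precise the claim that $\mathrm{N}_{\mathcal{O}_I}$ fixes each indecomposable projective-injective of $\mathcal{O}_I$ (not merely the one $\mathrm{C}\Delta^\star$): one needs that every indecomposable projective-injective of $\mathcal{O}_I$ occurs as a summand of $\mathrm{F}_i\,\Delta^\star$ for a projective functor $\mathrm{F}_i$, and then uses that $\mathrm{N}_{\mathcal{O}_I}$ naturally commutes with projective functors (Corollary~\ref{cor8}) to propagate $\mathrm{N}_{\mathcal{O}_I}\Delta^\star\cong\Delta^\star$ to all of them; this is essentially the fpf-machinery of Section~\ref{s2}, so the work is bookkeeping rather than new ideas. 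A secondary point to check is that passing from $\mathcal{O}_I$ with its Nakayama functor to the subcategory $\mathcal{P}_I$ genuinely computes $\mathrm{N}_{\mathcal{P}_I}$ — i.e. that Serre duality is compatible with the inclusion of the subcategory of perfect complexes on projective-injectives — which follows from the naturality built into Theorem~\ref{thm4} and the fact that morphisms between projective-injectives are computed identically in $\mathcal{O}_I$ and in $\mathcal{P}_I$.
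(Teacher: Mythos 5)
Your high-level plan agrees with the paper: reduce via Corollary~\ref{cor9} to a statement about $\mathrm{C}^2$ and then invoke Proposition~\ref{prop2}. But you miss the one observation that makes the whole thing trivial, and the replacement argument you sketch has a genuine gap.

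The key fact the paper uses is that $\mathrm{C}$ restricted to projective-injective modules is \emph{literally the identity functor}, not merely some auto-equivalence. This is immediate from the definition recalled in Subsection~\ref{s3.3}: $\mathrm{C}P$ is the submodule of $P$ generated by images of morphisms from projective-injectives, and if $P$ is itself projective-injective the identity map is such a morphism, so $\mathrm{C}P=P$; on morphisms $\mathrm{C}$ acts by restriction, which is again the identity. Hence $\mathrm{C}^2$ is the identity on $\mathcal{P}_I$, so (by Corollary~\ref{cor9}) the Nakayama/Serre functor on $\cP(\mathcal{P}_I)$ is isomorphic to the identity, and Proposition~\ref{prop2} finishes. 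Nothing about $\Delta^{\star}$, $\star$-self-duality, or propagation by projective functors is needed.

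Your substitute argument --- start from $\mathrm{C}\Delta^{\star}\cong\Delta^{\star}$ and propagate using Corollary~\ref{cor8} --- leaves exactly the gap you flag: you must show every indecomposable projective-injective occurs as a summand of $\mathrm{F}_i\,\Delta^{\star}$. This is not automatic from the fpf-property (which is stated for $\Delta$, not for $\Delta^{\star}$), and the natural attempt to deduce it from ``$P$ is a summand of $\mathrm{F}_i\Delta$'' by applying $\mathrm{C}^2$ is circular, since it already presupposes $\mathrm{C}^2P\cong P$. Moreover, the sentence ``$\mathrm{C}$ acts as an auto-equivalence of $\mathcal{P}_I$ whose square is forced to be trivial by the self-duality $\star$'' is not a valid inference: the square of an auto-equivalence is not forced to be trivial by the existence of a duality. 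There are also a couple of incorrect assertions in the surrounding discussion: $\mathrm{C}\Delta\cong K$ is simple (not projective-injective, contradicting your phrase ``$\mathrm{C}$ maps projectives to projective-injectives''), and $\mathrm{C}^2\Delta\cong\Delta^{\star}$ is the injective hull of the simple socle $K$ of $\Delta$, not the injective hull of $\Delta$ nor in general a projective-injective module. None of these issues is fatal to the conclusion, but as written the proposal does not close and should be replaced by the one-line observation that $\mathrm{C}|_{\mathcal{P}_I}=\mathrm{Id}$.
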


\begin{proof}
By our construction of $\mathcal{P}_I$, every projective 
$\mathcal{P}_I$-module is also injective. Thus 
the Nakayama functor preserves $\cP(\mathcal{P}_I)$ and hence
gives rise to a Serre functor on this category.
From Corollary~\ref{cor9} we obtain that the Serre functor 
on $\cP(\mathcal{P}_I)$ is the left derived of $\mathrm{C}^2$. 
The functor $\mathrm{C}$ obviously induces the identity functor when 
restricted to projective-injective modules.
Now the claim follows from Proposition~\ref{prop2}.
\end{proof}

In the case when $I$ is the annihilator of some parabolic Verma module,
Corollary~\ref{cor11} implies (and reproves) \cite[Theorem~4.6]{MS}.
If $\mathfrak{g}$ is of type $A$, then for an arbitrary $I$ the claim 
of  Corollary~\ref{cor11} can be deduced combining
\cite[Theorem~4.6]{MS} and \cite[Theorem~18]{MS2}. In all
other cases Corollary~\ref{cor11} seems to be new.

\section{Serre functors for Lie superalgebras}\label{s4}

\subsection{Generalities on Lie superalgebras}\label{s4.1}

In this section we denote by $\mathfrak{g}$ one of the following
Lie superalgebras: $\mathfrak{gl}(m,n)$, $\mathfrak{sl}(m,n)$,
$\mathfrak{osp}(m,n)$, $\mathfrak{psl}(n,n)$, $\mathfrak{q}(n)$,
$\mathfrak{pq}(n)$, $\mathfrak{sq}(n)$, $\mathfrak{psq}(n)$. For all
these superalgebras the Lie algebra $\mathfrak{g}_0$ is reductive. 
The first four superalgebras admit an even $\mathfrak{g}$-invariant bilinear 
form which is non-degenerate on $[\mathfrak{g},\mathfrak{g}]$
(these superalgebras are called {\em basic}). The last four superalgebras
are called {\em queer} Lie superalgebras, or Lie superalgebras 
of type $\mathfrak{q}$.
In the following we denote by $\mathfrak{g}\text{-}\mathrm{sMod}$ 
the category of all $\mathfrak{g}$-supermodules, where morphisms are
homogeneous of degree zero (and by a module over a superalgebra we
always mean a supermodule). The category $\mathfrak{g}\text{-}\mathrm{sMod}$
is abelian. Denote by $\Pi$ the parity change autoequivalence of 
$\mathfrak{g}\text{-}\mathrm{sMod}$ and by $U=U(\mathfrak{g})$
the universal enveloping algebra of $\mathfrak{g}$.

Fix a triangular decomposition $\mathfrak{g}=\mathfrak{n}^-\oplus
\mathfrak{h}\oplus\mathfrak{n}^+$ of $\mathfrak{g}$, where 
$\mathfrak{h}$ is a Cartan subalgebra, and let $\mathcal{O}$ denote
the full subcategory of $\mathfrak{g}\text{-}\mathrm{sMod}$
consisting of finitely generated modules $M$ such that the
action of $\mathfrak{h}_0$ on $M$ is diagonalizable and the
action of $U(\mathfrak{n}^+)$ on $M$ is locally finite. Note that
$\mathfrak{h}=\mathfrak{h}_0$ if and only if  $\mathfrak{g}$ is basic.

If $\mathfrak{p}\subset \mathfrak{g}$ is a parabolic subsuperalgebra
containing $\mathfrak{h}\oplus\mathfrak{n}^+$, we denote by 
$\mathcal{O}^{\mathfrak{p}}$ the full subcategory of $\mathcal{O}$
consisting of all modules $M$ on which the action of
$U(\mathfrak{p})$ is locally finite. In particular, the category
$\mathcal{O}^{\mathfrak{g}}$ coincides with the category of 
finite dimensional $\mathfrak{h}_0$-diagonalizable 
$\mathfrak{g}$-modules. Abusing language, in the following by 
finite dimensional $\mathfrak{g}$-modules we will mean
finite dimensional $\mathfrak{h}_0$-diagonalizable $\mathfrak{g}$-modules
(the two categories coincide if $\mathfrak{g}_0$ is
a semi-simple Lie algebra). Let ${}_-{}^{\star}$ denote the usual duality 
on  $\mathcal{O}$ (which is simple preserving for basic
$\mathfrak{g}$ and simple preserving up to $\Pi$  if
$\mathfrak{g}$ is of type $\mathfrak{q}$). 
This duality restricts to $\mathcal{O}^{\mathfrak{p}}$.

We have the usual induction and restriction functors $\mathrm{Ind}$
and $\mathrm{Res}$ between $\mathfrak{g}\text{-}\mathrm{sMod}$ and
$\mathfrak{g}_0\text{-}\mathrm{sMod}$. 
They are adjoint in the 
usual way $(\mathrm{Ind},\mathrm{Res})$. We also have the adjointness
$(\mathrm{Res},\Pi^{\dim\mathfrak{g}_1}\circ\mathrm{Ind})$, see
\cite[3.2.4]{Go1} and \cite[Proposition~2.2]{Fr}. Note that 
$\dim\mathfrak{g}_1$ is even dimensional if $\mathfrak{g}$ is basic
and also for $\mathfrak{q}(n)$ and $\mathfrak{pq}(n)$ when $n$ is even,
and for $\mathfrak{sq}(n)$ and $\mathfrak{psq}(n)$ when $n$ is odd. 

Since $U(\mathfrak{g})$ is a finite extension of $U(\mathfrak{g}_0)$,
we define the category $\mathcal{H}$ of {\em Harish-Chandra
$\mathfrak{g}$-bimodules} as the full subcategory of the category
of $\mathfrak{g}$-bimodules which become Harish-Chandra
$\mathfrak{g}_0$-bimodules after restriction. For any graded ideal
$I$ of $U(\mathfrak{g})$ we denote by $\mathcal{H}^1_I$ the full
subcategory of $\mathcal{H}$ which consists of all bimodules
annihilated by $I$ from the right.
If $M$ and $N$ are $\mathfrak{g}$-modules, we denote by
$\mathcal{L}(M,N)$ the $\mathfrak{g}_0$-bimodule 
$\mathcal{L}(\mathrm{Res}\,M,\mathrm{Res}\,N)$ which is also a
$\mathfrak{g}$-bimodule as $U(\mathfrak{g})$ is a finite 
extension of $U(\mathfrak{g}_0)$.

\subsection{Structure of the category $\mathcal{O}$}\label{s4.2}

Consider the category $\mathfrak{h}\text{-}\mathrm{dmod}$ of 
finite dimensional $\mathfrak{h}$-modules on which the action of
$\mathfrak{h}_0$ is diagonalizable. In case $\mathfrak{h}=\mathfrak{h}_0$
(i.e. $\mathfrak{g}$ is basic)
this category is semi-simple and its simple objects are naturally
parameterized by pairs $(\lambda,\varepsilon)$, where 
$\lambda\in \mathfrak{h}_0^*$ (which describes the 
action of $\mathfrak{h}_0$) and $\varepsilon\in\{0,1\}$
(which determines the parity of the module). If $\mathfrak{g}$ is of type
$\mathfrak{q}$, then the category $\mathfrak{h}\text{-}\mathrm{dmod}$
is not semi-simple, it has enough projectives (see \cite[Section~3]{Br1}) 
and its simple objects are naturally parameterized by pairs 
$(\lambda,\varepsilon)$, where $\lambda\in \mathfrak{h}_0^*$ (which 
describes the action of $\mathfrak{h}_0$) and $\varepsilon$ is either in 
$\{+\}$ or in $\{+,-\}$ depending on $\lambda$ as prescribed by the theory
of Clifford algebras (see \cite[Appendix]{Go2}). If we have two different
simple modules for some $\lambda$, they differ by a parity change.

For a simple $V\in \mathfrak{h}\text{-}\mathrm{dmod}$ set 
$\mathfrak{n}^+V=0$ and define the corresponding {\em Verma} 
module as
\begin{displaymath}
{\Delta}(V):=U(\mathfrak{g})\otimes_{U(\mathfrak{h}\oplus
\mathfrak{n}^+)} V.
\end{displaymath}
Every object in $\mathcal{O}$ has finite length
(already as a $\mathfrak{g}_0$-module as $U(\mathfrak{g})$ is a 
finite  extension of $U(\mathfrak{g}_0)$). To distinguish
Lie algebras from superalgebras, we denote by $\tilde{\mathcal{O}}$
the category $\mathcal{O}$ for $\mathfrak{g}_0$ (defined with respect to
the triangular decomposition of $\mathfrak{g}_0$ obtained from the 
above triangular decomposition of $\mathfrak{g}$ by restriction).

Category $\mathcal{O}$ has enough projectives (which may be obtained as
direct summands of modules induced from projectives in  
$\tilde{\mathcal{O}}$), every projective has a filtration by
Verma modules (see \cite{Br1}).
As induction is exact and adjoint (from both sides) to exact functors,
it sends injectives to injectives and projectives to projectives.
In particular, it follows that all injective modules in 
$\mathcal{O}$ have finite projective dimension (since this is true
for $\tilde{\mathcal{O}}$).

For any parabolic $\mathfrak{p}$ (as in Subsection~\ref{s4.1}), 
the usual projection from
$\mathcal{O}$ onto $\mathcal{O}^{\mathfrak{p}}$ obviously commutes
with both induction and restriction. Hence, using \cite{RC}
and \cite{Sc}, we similarly get that 
injective modules in  $\mathcal{O}^{\mathfrak{p}}$ have 
finite projective dimension. We denote by 
${\Delta}^{\mathfrak{p}}(V)$ the image of $\Delta(V)$ 
in $\mathcal{O}^{\mathfrak{p}}$.

\subsection{Equivalence to Harish-Chandra bimodules}\label{s4.3}

Let $T$ denote the generator of the anticenter of $U(\mathfrak{g})$,
see \cite{Go1}. An element $\lambda\in \mathfrak{h}_0^*$ is called
{\em strongly typical} if $T$ does not annihilate the Verma 
module ${\Delta}(V)$, where $V$ is a simple 
$\mathfrak{h}$-module on which $\mathfrak{h}_0$ acts via $\lambda$.
Note that $\lambda$ strongly typical implies that $V$ is projective
in $\mathfrak{h}\text{-}\mathrm{dmod}$.

Let $\Theta$ denote the set of weights of all simple finite 
dimensional $\mathfrak{g}$-modules. Fix a strongly typical 
$\lambda\in \mathfrak{h}_0^*$ which is regular and dominant  
with respect to the dot-action of $W$ and set $\overline{\lambda}=
\lambda+\Theta$. Call such $\lambda$ {\em generic} provided
that the $\mathfrak{g}_0$-module $\mathrm{Res}\,\Delta(V)$ is a
direct sum of Verma modules and non-isomorphic direct summands of
$\mathrm{Res}\,\Delta(V)$ correspond to different central characters.
Denote by $\mathcal{O}_{\overline{\lambda}}$ the full
subcategory of $\mathcal{O}$ consisting of all $M$ such that
the $\mathfrak{g}_0$-support of $M$ belongs to $\overline{\lambda}$.
Define $\mathcal{O}_{\overline{\lambda}}^{\mathfrak{p}}$ correspondingly.
For $V$ as above the indecomposable direct summand of 
$\mathcal{O}$ containing ${\Delta}(V)$ is  equivalent
to a direct summand of $\tilde{\mathcal{O}}$,
see \cite{Go3,FM}.

As in the Lie algebra situation, we have {\em projective endofunctors}
on $\mathcal{O}$ given as direct summands of the functors of the form
$E\otimes{}_-$, where $E$ is a finite dimensional
$\mathfrak{g}$-module. The functor $E\otimes{}_-$ is both
left and right adjoint to $\check{E}^*\otimes{}_-$, 
in particular, it is exact and sends projectives to projectives.
It is easy to see that every indecomposable projective in 
$\mathcal{O}_{\overline{\lambda}}^{\mathfrak{p}}$ is a direct summand
of $E\otimes{\Delta}(V)$ for some finite dimensional $E$, 
in particular,
$\mathcal{O}_{\overline{\lambda}}^{\mathfrak{p}}$ coincides with the
category $\mathrm{coker}\langle E\otimes{\Delta}(V)
\rangle$ which consists of all modules $M$ having a two step resolution
\begin{displaymath}
X_1\to X_0\to M\to 0, 
\end{displaymath}
where $X_0,X_1$ belong to the additive closure of modules of the form
$E\otimes{\Delta}(V)$ for finite dimensional $E$. Denote by $\mathcal{F}$ 
the full subcategory of the category of right exact endofunctors of 
$\mathcal{O}$ whose  objects are projective functors. 
Now we can formulate our first main result for superalgebras.

\begin{theorem}\label{thm21}
Let $\lambda\in \mathfrak{h}_0^*$ be  as above, $V$ a simple
$\mathfrak{h}_0$-module of weight $\lambda$ and
$I=\mathrm{Ann}_{U}\Delta^{\mathfrak{p}}(V)$.
\begin{enumerate}[$($a$)$]
\item\label{thm21.3} The triple 
$(\mathcal{O}_{\overline{\lambda}}^{\mathfrak{p}},
\Delta^{\mathfrak{p}}(V),\mathcal{F})$ is an fpf-category. 
\item\label{thm21.1} Assume that $\Pi V\not\cong
V$ and that $\lambda$ is generic. Then the natural injective map
$U/I\hookrightarrow \mathcal{L}(\Delta^{\mathfrak{p}}(V),
\Delta^{\mathfrak{p}}(V))$ is surjective.
\item\label{thm21.2} Assume that $\Pi V\not\cong V$ and that 
$\lambda$ is generic. Then we have the following mutually inverse 
equivalences of categories:
\begin{equation}\label{eq21}
\xymatrix{
\mathcal{O}_{\overline{\lambda}}^{\mathfrak{p}}
\ar@/^7pt/[rrrr]^{\mathcal{L}(\Delta^{\mathfrak{p}}(V),{}_-)}&&&&
\mathcal{H}_{I}^1
\ar@/^7pt/[llll]^{{}_-\otimes_{U}\Delta^{\mathfrak{p}}(V)}
}
\end{equation}
\end{enumerate}
\end{theorem}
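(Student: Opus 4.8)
The three parts form a logical cascade, so I would prove them in order, each feeding into the next. Part~\eqref{thm21.3} is the foundational statement that $(\mathcal{O}_{\overline{\lambda}}^{\mathfrak{p}},\Delta^{\mathfrak{p}}(V),\mathcal{F})$ is an fpf-category; its proof should parallel Khomenko's arguments from \cite{Kh} for the Lie algebra case (cited as \cite[Propositions~16 and 22]{Kh} in Subsection~\ref{s2.2}), but carried out super-intrinsically rather than by reduction to $\mathfrak{g}_0$. Conditions~\eqref{fpf1}--\eqref{fpf3} are essentially already recorded in Subsection~\ref{s4.3}: $\mathrm{Id}\in\mathcal{F}$ since $E$ may be taken trivial; $\mathrm{F}_i\Delta^{\mathfrak{p}}(V)=E\otimes\Delta^{\mathfrak{p}}(V)$ is projective because $E\otimes{}_-$ is exact and sends projectives to projectives and $\Delta^{\mathfrak{p}}(V)$ is projective (being the image of the projective $\Delta(V)$); condition~\eqref{fpf3} is exactly the observation that $\mathcal{O}_{\overline{\lambda}}^{\mathfrak{p}}=\mathrm{coker}\langle E\otimes\Delta^{\mathfrak{p}}(V)\rangle$. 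The real content is the surjectivity of the evaluation map in~\eqref{fpf4}, i.e.\ that every $\mathfrak{g}$-homomorphism $E\otimes\Delta^{\mathfrak{p}}(V)\to E'\otimes\Delta^{\mathfrak{p}}(V)$ comes from a natural transformation $E\otimes{}_-\to E'\otimes{}_-$. Here I would use the adjunction $(\check{E}^*\otimes{}_-\,,\,E\otimes{}_-)$ (noted in Subsection~\ref{s4.3}) to rewrite $\mathrm{Hom}_{\mathfrak{g}}(E\otimes\Delta^{\mathfrak{p}}(V),E'\otimes\Delta^{\mathfrak{p}}(V))\cong\mathrm{Hom}_{\mathfrak{g}}(\Delta^{\mathfrak{p}}(V),\check{E}^*\otimes E'\otimes\Delta^{\mathfrak{p}}(V))$, then decompose $\check{E}^*\otimes E'$ into indecomposable summands of the form $E''$ and reduce to showing that any map $\Delta^{\mathfrak{p}}(V)\to E''\otimes\Delta^{\mathfrak{p}}(V)$ factors as $\Delta^{\mathfrak{p}}(V)\hookrightarrow E''\otimes\Delta^{\mathfrak{p}}(V)$ up to scalar, using that $\Delta^{\mathfrak{p}}(V)$ has a Verma flag, $\Delta^{\mathfrak{p}}(V)$ sits at the ``top'', and highest-weight considerations. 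This is where the super-structure requires care, especially in type $\mathfrak{q}$ where $\mathfrak{h}$ is non-commutative; one must check that the Clifford-module $V$ at the top behaves as in the classical case, which is where strong typicality and projectivity of $V$ in $\mathfrak{h}\text{-}\mathrm{dmod}$ enter.

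\textbf{Part (b): the Kostant-type surjectivity.} For~\eqref{thm21.1} I would argue as follows. The injection $U/I\hookrightarrow\mathcal{L}(\Delta^{\mathfrak{p}}(V),\Delta^{\mathfrak{p}}(V))$ is the adjoint of the action map; its cokernel is a Harish-Chandra bimodule whose restriction to $\mathfrak{g}_0$ I want to control. Using the genericity hypothesis, $\mathrm{Res}\,\Delta^{\mathfrak{p}}(V)$ decomposes as a direct sum of parabolic Verma modules $\bigoplus_k \tilde\Delta^{\mathfrak{p}}(\mu_k)$ lying in \emph{distinct} $\mathfrak{g}_0$-central-character blocks (this is precisely the definition of generic). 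Consequently $\mathcal{L}(\mathrm{Res}\,\Delta^{\mathfrak{p}}(V),\mathrm{Res}\,\Delta^{\mathfrak{p}}(V))$, as a $\mathfrak{g}_0$-bimodule, decomposes into a sum of blocks indexed by pairs $(j,k)$ of central characters, and the diagonal blocks are $\mathcal{L}(\tilde\Delta^{\mathfrak{p}}(\mu_k),\tilde\Delta^{\mathfrak{p}}(\mu_k))$. For each of these the classical Kostant problem has a positive answer (by \cite[4.1]{KM}, since these are parabolic Verma modules for a reductive Lie algebra with regular dominant highest weight), so each diagonal block equals $U(\mathfrak{g}_0)/\mathrm{Ann}$ of the relevant Verma. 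The hypothesis $\Pi V\not\cong V$ is what forces the ``extra'' off-diagonal pieces to vanish or to be accounted for inside $U(\mathfrak{g})/I$: I expect one uses that the anticenter element $T$ acts non-trivially (strong typicality) together with parity to match the size of $U/I$ against $\bigoplus_k \mathcal{L}(\tilde\Delta^{\mathfrak{p}}(\mu_k),\tilde\Delta^{\mathfrak{p}}(\mu_k))$ plus the correct cross terms. I would finish by a dimension count in each generalised weight space, exactly in the spirit of Lemma~\ref{lem5}: both sides are computed, and the injection is shown to be an equality. This dimension-matching, and the precise role of $\Pi V\not\cong V$ in it, is the step I expect to be the main obstacle — the super-center/anticenter bookkeeping of Gorelik (\cite{Go1,Go4}) will be needed here.

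\textbf{Part (c): the equivalence.} Given~\eqref{thm21.1}, part~\eqref{thm21.2} should follow the template of \cite[Theorem~5.9]{BG} and \cite[Theorem~3.1]{MiSo} adapted to the super setting, just as the Lie-algebra equivalence~\eqref{eq2} was obtained. The plan is: first check the two functors $\mathcal{L}(\Delta^{\mathfrak{p}}(V),{}_-)$ and ${}_-\otimes_U\Delta^{\mathfrak{p}}(V)$ are well-defined between $\mathcal{O}_{\overline{\lambda}}^{\mathfrak{p}}$ and $\mathcal{H}^1_I$ and are adjoint (the tensor functor is right exact, $\mathcal{L}$ is left exact, and the adjunction $(\ {}_-\otimes_U\Delta^{\mathfrak{p}}(V)\,,\,\mathcal{L}(\Delta^{\mathfrak{p}}(V),{}_-)\ )$ is formal). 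Then one must verify that the unit and counit are isomorphisms. On the ``generating'' object $\Delta^{\mathfrak{p}}(V)$, $\mathcal{L}(\Delta^{\mathfrak{p}}(V),\Delta^{\mathfrak{p}}(V))\cong U/I$ by part~\eqref{thm21.1}, and $(U/I)\otimes_U\Delta^{\mathfrak{p}}(V)\cong\Delta^{\mathfrak{p}}(V)$; since projective functors $E\otimes{}_-$ intertwine the two sides via~\eqref{eq59}-type isomorphisms $\mathcal{L}(E\otimes\Delta^{\mathfrak{p}}(V),{}_-)\cong\check E^*\otimes\mathcal{L}(\Delta^{\mathfrak{p}}(V),{}_-)$ and because every projective in $\mathcal{O}_{\overline{\lambda}}^{\mathfrak{p}}$ is a summand of some $E\otimes\Delta^{\mathfrak{p}}(V)$ (as noted in Subsection~\ref{s4.3}), the adjunction morphisms are isomorphisms on all projectives; right-exactness of $\mathcal{O}_{\overline{\lambda}}^{\mathfrak{p}}=\mathrm{coker}\langle E\otimes\Delta^{\mathfrak{p}}(V)\rangle$ and of $\mathcal{H}^1_I$ then propagates this to the whole categories via the two-step resolutions. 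The one genuinely new point is checking that $\mathcal{L}(\Delta^{\mathfrak{p}}(V),{}_-)$ lands in $\mathcal{H}^1_I$ and not in a larger category, which again reduces to part~\eqref{thm21.1}. I expect parts (a) and (c) to be relatively routine adaptations; the crux of the whole theorem is the Kostant-problem statement~\eqref{thm21.1}, for which Gorelik's machinery is the essential tool.
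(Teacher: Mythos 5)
Your decomposition of the theorem into a cascade and your treatment of part~\eqref{thm21.2} match the paper's strategy (the paper literally cites \cite[Theorem~3.1]{MiSo} mutatis mutandis). However, your proposals for parts~\eqref{thm21.3} and~\eqref{thm21.1} diverge from the paper in ways that matter.

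For part~\eqref{thm21.3}, you explicitly announce that you would argue ``super-intrinsically rather than by reduction to $\mathfrak{g}_0$'', and you then sketch a highest-weight/Verma-flag analysis of maps $\Delta^{\mathfrak{p}}(V)\to E''\otimes\Delta^{\mathfrak{p}}(V)$. This is precisely the opposite of what the paper does and, as sketched, leaves the hard content untouched. Condition~\eqref{fpf4} is the genuinely deep part of the fpf-axioms; in the Lie-algebra case it is not a formality but rests on Bernstein--Gelfand/Soergel structure theory for projective functors, and there is no reason a purely formal highest-weight argument should give it. What the paper actually does is exploit the equivalence $\mathrm{G}:\mathcal{O}_V\to\tilde{\mathcal{O}}_{\lambda'}$ provided by \cite{Go3,FM} for strongly typical $\lambda$, together with the commutative square~\eqref{eq36} showing that this equivalence intertwines projective functors on the two sides; then~\eqref{fpf4} for $\mathcal{O}_V$ is \emph{inherited} from the known Lie-algebra case for $\tilde{\mathcal{O}}_{\lambda'}$. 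The passage from $\mathcal{O}_V$ to $\mathcal{O}_{\overline{\lambda}}$ is by the same adjunction reduction you mention, and then to $\mathcal{O}^{\mathfrak{p}}_{\overline{\lambda}}$ by noting that morphisms between parabolic projectives come from morphisms between ordinary ones. Your plan drops the one tool (the strongly typical equivalence) that makes the argument go through, and the substitute is too vague to carry the load; this is a genuine gap.

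For part~\eqref{thm21.1}, you correctly identify the main ingredients (genericity, the classical Kostant result for $\mathfrak{g}_0$, and the role of $\Pi V\not\cong V$ via the Clifford algebra at the Cartan), but your proposed mechanism, a block decomposition of $\mathcal{L}(\mathrm{Res}\,\Delta,\mathrm{Res}\,\Delta)$ into pieces indexed by pairs of $\mathfrak{g}_0$-central characters followed by a generalised-weight-space dimension count, is not the paper's mechanism and by itself will not pin down the image of $U$. The paper instead adapts Gorelik's ``perfect mate'' argument from \cite[Section~8]{Go4}: after Lemma~\ref{lem41} locates a preferred $\mathfrak{g}_0$-central character $\mathtt{m}$, one forms $X$ (the two-sided $\mathtt{m}$-torsion in $\mathcal{L}(\Delta(\lambda),\Delta(\lambda))$), proves $B:=UXU$ is all of $\mathcal{L}(\Delta(\lambda),\Delta(\lambda))$ (Proposition~\ref{prop42}, mutatis mutandis \cite[8.4.2]{Go4}), and then restricts attention to the $\mathfrak{g}_0$-submodule $M\subset\Delta(V)$ annihilated by $\mathtt{m}$. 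Genericity identifies $M$ with $\dim V$ copies of $\Delta(\lambda)$, so $\mathcal{L}(M,M)\cong\mathrm{End}_{\mathbb{C}}(V)\otimes\mathcal{L}(\Delta(\lambda),\Delta(\lambda))$; classical Kostant handles the second tensor factor, and the surjectivity of $U(\mathfrak{h})\twoheadrightarrow\mathrm{End}_{\mathbb{C}}(V)$ (\cite[A.3.2]{Go2}, exactly where $\Pi V\not\cong V$ enters) handles the first. Your proposal gets the cast of characters right but misses the key reduction to the $\mathtt{m}$-torsion subbimodule, which is what converts the problem into one where a dimension comparison actually closes.
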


Note that for our choice of $\lambda$ the module 
$\Delta^{\mathfrak{p}}(V)$ is obviously projective in 
$\mathcal{O}^{\mathfrak{p}}$. The condition $\Pi V\not\cong V$
is equivalent to the condition that $\dim\mathfrak{g}_1$ is even.

\subsection{Proof of Theorem~\ref{thm21}\eqref{thm21.3}}\label{s4.4}

We only need to check condition \eqref{fpf4}. Assume first that
$\mathfrak{p}=\mathfrak{h}\oplus \mathfrak{n}^+$. For any finite
dimensional $\mathfrak{g}$-module $E$ we have the following
commutative diagram:
\begin{equation}\label{eq35}
\xymatrix{
\mathcal{O}\ar[d]_{\mathrm{Res}}\ar[rr]^{E\otimes{}_-} 
&&\mathcal{O}\ar[d]^{\mathrm{Res}}\\
\tilde{\mathcal{O}}\ar[rr]^{\mathrm{Res}E\otimes{}_-} &&\tilde{\mathcal{O}}
}
\end{equation}
Denote by $\mathcal{O}_V$ the block of $\mathcal{O}$ containing $\Delta(V)$.
For any $\mu\in \mathfrak{h}_0^*$ denote 
by $\tilde{\mathcal{O}}_{\mu}$ the
block of $\tilde{\mathcal{O}}$ containing $\Delta(\mu)$.
By \cite{Go3} and \cite{FM}, to our $\lambda$ there corresponds
a dominant regular $\lambda'\in \mathfrak{h}_0^*$ such that 
the appropriate direct summand of \eqref{eq35} has the form
\begin{equation}\label{eq36}
\xymatrix{
\mathcal{O}_V\ar[d]_{\mathrm{G}}\ar[rr]^{\mathrm{F}} 
&&\mathcal{O}_V\ar[d]^{\mathrm{G}}\\
\tilde{\mathcal{O}}_{\lambda'}\ar[rr]^{\mathrm{F}'} 
&&\tilde{\mathcal{O}}_{\lambda'},
}
\end{equation}
where $\mathrm{G}$ is an equivalence. In particular, for any projective
functor $\mathrm{F}:\mathcal{O}_V\to \mathcal{O}_V$ there is a
projective functor $\mathrm{F}':\tilde{\mathcal{O}}_{\lambda'}\to
\tilde{\mathcal{O}}_{\lambda'}$ such that \eqref{eq36} commutes.
Since condition \eqref{fpf4} is satisfied for $\tilde{\mathcal{O}}_{\lambda'}$
and $\mathrm{G}$ is an equivalence, it follows that \eqref{fpf4} is 
satisfied for $\mathcal{O}_V$ as well. This shows that $\mathcal{O}_V$
is an fpf-category. 

The corresponding statement for the category $\mathcal{O}_{\overline{\lambda}}$ reduces to $\mathcal{O}_V$
using the adjointness of $E\otimes{}_-$ and $\check{E}^*\otimes{}_-$
as follows. We have
\begin{displaymath}
\mathrm{Hom}_{\mathfrak{g}}(E_1\otimes \Delta(V),
E_2\otimes \Delta(V))\cong
\mathrm{Hom}_{\mathfrak{g}}(\Delta(V),
\check{E}_1^*\otimes E_2\otimes \Delta(V))
\end{displaymath}
by adjointness and the homomorphism space on the right hand side 
can be computed inside $\mathcal{O}_V$. Similarly for the
morphisms of the corresponding projective functors. Now the claim follows
from the observation that the evaluation map from \eqref{fpf4}
commutes with adjunction.

The statement for the category 
$\mathcal{O}_{\overline{\lambda}}^{\mathfrak{p}}$ follows from that
for the category $\mathcal{O}_{\overline{\lambda}}$ since every 
homomorphism between projective modules in 
$\mathcal{O}_{\overline{\lambda}}^{\mathfrak{p}}$ comes from a
homomorphism between projective modules in
$\mathcal{O}_{\overline{\lambda}}$.

\subsection{Proof of Theorem~\ref{thm21}\eqref{thm21.1}
and \eqref{thm21.2}}\label{s4.5}

Let us first assume that $\mathfrak{p}=\mathfrak{h}\oplus \mathfrak{n}^+$.
If $\mathfrak{g}$ is one of the superalgebras
$\mathfrak{gl}(m,n)$, $\mathfrak{sl}(m,n)$,
$\mathfrak{osp}(m,n)$ or $\mathfrak{psl}(n,n)$, the
claim of Theorem~\ref{thm21}\eqref{thm21.1} is proved in
\cite[Proposition~5.1(ii)]{Go4}. The idea of the following proof for
type $\mathfrak{q}$-superalgebras was suggested by Maria Gorelik and 
follows closely \cite[Section~8]{Go4}. We are going to define some
analogue of Gorelik's notion of a {\em perfect mate}, show that
it exists for type $\mathfrak{q}$-superalgebras and use it
to prove our statement.

Denote by $\chi_{\lambda}$ the $U(\mathfrak{g})$-central character 
of $\Delta(V)$ and by $\chi_{\lambda}^0$ the $U(\mathfrak{g}_0)$-central 
character of $\Delta(\lambda)$. 

\begin{lemma}\label{lem41}
Let $L$ be a simple $U(\mathfrak{g})$-module with central character
$\chi_{\lambda}$. Then $\mathrm{Res}\,L$ has a non-zero component
with $U(\mathfrak{g}_0)$-central character $\chi_{\lambda}^0$.
\end{lemma}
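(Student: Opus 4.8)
\textbf{Proof plan for Lemma~\ref{lem41}.}
The plan is to pass to the universal enveloping algebra level and exploit that $U(\mathfrak{g})$ is a finite extension of $U(\mathfrak{g}_0)$ together with the structure of $\mathrm{Res}\,\Delta(V)$. First I would record the standard decomposition: since $L$ has $U(\mathfrak{g})$-central character $\chi_\lambda$, the module $\mathrm{Res}\,L$ is annihilated by $\mathrm{Ann}_{Z(U(\mathfrak{g}_0))}$-elements only up to the finiteness of the extension, so $\mathrm{Res}\,L$ decomposes as a finite direct sum $\bigoplus_\theta (\mathrm{Res}\,L)_\theta$ over the (finitely many) $U(\mathfrak{g}_0)$-central characters $\theta$ lying over $\chi_\lambda$; concretely these $\theta$ are exactly the central characters occurring in $\mathrm{Res}\,\Delta(V)$, which by the hypothesis that $\lambda$ is \emph{generic} are pairwise distinct and include $\chi_\lambda^0$. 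The claim is then that the component $(\mathrm{Res}\,L)_{\chi_\lambda^0}$ is nonzero.

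The key step is to realise $L$ as a quotient of $\Delta(V)$ (or to compare it with $\Delta(V)$) and track the $\chi_\lambda^0$-isotypic part through the surjection. Since $\lambda$ is dominant and regular, and $V$ is projective in $\mathfrak{h}\text{-}\mathrm{dmod}$, every simple module with central character $\chi_\lambda$ in the relevant block is a quotient of $\Delta(V)$ up to parity; I would use this to get a surjection $\Delta(V)\tto L$ (possibly after applying $\Pi$, which does not affect central characters or the restriction argument). Restricting, we get a surjection $\mathrm{Res}\,\Delta(V)\tto \mathrm{Res}\,L$ which respects the decomposition into $U(\mathfrak{g}_0)$-central-character components because these are canonical direct summands. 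Hence it suffices to show that the $\chi_\lambda^0$-component of the induced map is still surjective and that its source is nonzero — the source is a nonzero direct sum of Verma modules $\Delta(\mu)$ with $U(\mathfrak{g}_0)$-central character $\chi_\lambda^0$ by genericity (in particular $\Delta(\lambda)$ itself appears), and a quotient of a nonzero module with a fixed central character, if it retains that central character in $L$, is nonzero; the point is that $L$, being simple with character $\chi_\lambda$, cannot have its entire $\chi_\lambda^0$-part killed because then $\mathrm{Res}\,L$ would be supported on the other characters over $\chi_\lambda$, contradicting the structure forced by the surjection together with the fact that the summand $\Delta(\lambda)$ of $\mathrm{Res}\,\Delta(V)$ generates (as a $\mathfrak{g}$-module) all of $\Delta(V)$ and hence maps onto $L$.

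I expect the main obstacle to be the last implication: ruling out that the $\chi_\lambda^0$-component of $\mathrm{Res}\,L$ vanishes. The cleanest way I see is to argue by $\mathfrak{g}$-module generation: the summand of $\mathrm{Res}\,\Delta(V)$ with central character $\chi_\lambda^0$ contains the highest weight line of $\Delta(V)$ (the weight $\lambda$ sits in $\Delta(\lambda)$), and this line generates $\Delta(V)$ over $\mathfrak{g}$; since $\Delta(V)\tto L$ is a surjection of $\mathfrak{g}$-modules, the image of this summand generates $L$ over $\mathfrak{g}$, so it is certainly nonzero, and being a $U(\mathfrak{g}_0)$-submodule with central character $\chi_\lambda^0$ it lands inside $(\mathrm{Res}\,L)_{\chi_\lambda^0}$, which is therefore nonzero. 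This reduces everything to the assertion that the highest weight space of $\Delta(V)$ lies in the $\chi_\lambda^0$-component of $\mathrm{Res}\,\Delta(V)$, which is immediate from the definition of $\chi_\lambda^0$ as the $U(\mathfrak{g}_0)$-central character of $\Delta(\lambda)$ and genericity guaranteeing the component decomposition is by distinct characters.
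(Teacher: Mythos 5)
There is a genuine gap that breaks the argument at its very first substantive step. You claim that, because $\lambda$ is dominant and regular, ``every simple module with central character $\chi_\lambda$ in the relevant block is a quotient of $\Delta(V)$ up to parity,'' and you build the entire proof on a surjection $\Delta(V)\tto L$. This is false on two counts. First, the lemma takes $L$ to be an \emph{arbitrary} simple $U(\mathfrak{g})$-module with central character $\chi_\lambda$ --- it is not assumed to lie in category $\mathcal{O}$, nor to be a highest weight module, and such $L$ (e.g.\ simple Whittaker modules or simple weight modules with infinite-dimensional weight spaces) is in general not a quotient of $\Delta(V)$. Second, even for simple highest weight modules in the block, only $L(\lambda)$ (the unique simple quotient of $\Delta(V)$, since $\lambda$ is dominant) is a quotient of $\Delta(V)$; the other simples $L(w\cdot\lambda)$ are quotients of different Verma modules $\Delta(w\cdot\lambda)$, not of $\Delta(V)$. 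So the surjection you rely on does not exist, and the highest-weight-line argument that follows has no source.

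The missing idea is to pass through annihilators rather than through covers. The paper sets $J=\mathrm{Ann}_U L$, a primitive ideal, and invokes Musson's classification to find a simple highest weight module $N\in\mathcal{O}_V$ with $\mathrm{Ann}_U N=J$. For $N$ the conclusion is easy by the equivalence of $\mathcal{O}_V$ with a block of $\tilde{\mathcal{O}}$: $\mathrm{Res}\,N$ has a nonzero $\chi_\lambda^0$-component. The transfer from $N$ to $L$ is then done purely at the level of the center of $U(\mathfrak{g}_0)$: if $\mathtt{m}_1=\mathtt{m}_{\chi_\lambda^0},\dots,\mathtt{m}_k$ are the maximal ideals of $Z(\mathfrak{g}_0)$ occurring in $\mathrm{Res}\,N$ and $l_i$ are chosen minimally so that $\prod_i\mathtt{m}_i^{l_i}\subset J$, then $\prod_{i\ge 2}\mathtt{m}_i^{l_i}\not\subset J$, hence $\prod_{i\ge 2}\mathtt{m}_i^{l_i}L\ne 0$, and this nonzero subspace is killed by $\mathtt{m}_1^{l_1}$ --- so $\mathrm{Res}\,L$ has a nonzero $\chi_\lambda^0$-component. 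Note this argument never constructs a map between $L$ and any module in $\mathcal{O}$; it only compares what central elements annihilate them, which is exactly what having the same primitive annihilator gives you. If you want to repair your write-up, replace the nonexistent surjection $\Delta(V)\tto L$ with this annihilator comparison (and cite the primitive-ideal classification for Lie superalgebras to obtain $N$); the rest of your first paragraph (genericity, finitely many central characters over $\chi_\lambda$) is fine and compatible with this route.
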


\begin{proof}
Set $J:=\mathrm{Ann}_{U}L$. Then $J$ is a primitive ideal
of $U$ and hence, by \cite{Mu}, coincides with the
annihilator of some simple highest weight module $N$. Since
$L$ has central character $\chi_{\lambda}$, we can choose 
$N\in\mathcal{O}_V$. From \eqref{eq36} (with $\lambda'=\lambda$ by 
\cite{FM}) it follows that $\mathrm{Res}\,N$ has a non-zero component
with central character $\chi_{\lambda}^0$. Let 
$\chi_1=\chi_{\lambda}^0$, $\chi_2,\dots,\chi_k$ be all central characters
occurring in $\mathrm{Res}\,N$, and $\mathtt{m}_1,\dots,\mathtt{m}_k$
the corresponding maximal ideals in $Z(\mathfrak{g}_0)$. Let 
$l_1,\dots,l_k$ be minimal possible such that 
$\prod_{i=1}^k\mathtt{m}_i^{l_i}$ annihilates $N$. Then
$\prod_{i=2}^k\mathtt{m}_i^{l_i}$ does not annihilate $N$ and thus
it does not annihilate $L$ either. At the same time, the nonzero
space $\prod_{i=2}^k\mathtt{m}_i^{l_i} L$ is annihilated by 
$\mathtt{m}_1^{l_1}$. The claim follows.
\end{proof}

Denote $\mathtt{m}=\mathtt{m}_1$, where $\mathtt{m}_1$ is as in the
proof of Lemma~\ref{lem41}. Consider the set
\begin{displaymath}
X:=\{v\in \mathcal{L}(\Delta(\lambda),\Delta(\lambda)):
v\mathtt{m}^l=\mathtt{m}^l v=0,l\gg 0\}
\end{displaymath}
and define $B:=UXU$, which is a $U$-subbimodule of 
$\mathcal{L}(\Delta(\lambda),\Delta(\lambda))$.

\begin{proposition}\label{prop42}
We have $B=\mathcal{L}(\Delta(\lambda),\Delta(\lambda))$.
\end{proposition}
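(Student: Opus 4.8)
The statement $B=\mathcal{L}(\Delta(\lambda),\Delta(\lambda))$ says that the sub-bimodule generated by the $\mathtt m$-torsion part of $\mathcal L(\Delta(\lambda),\Delta(\lambda))$ (with respect to the ``new'' block $\mathtt m$ coming from the super-central character) actually exhausts the whole Harish-Chandra bimodule. The plan is to compare $B$ with the full bimodule $\mathcal{L}(\Delta(\lambda),\Delta(\lambda))$ by a decomposition into generalized eigenspaces for the action of $Z(\mathfrak g_0)$, using that the latter acts locally finitely and that $\mathcal L(\Delta(\lambda),\Delta(\lambda))\cong \mathcal L(\mathrm{Res}\,\Delta(V),\mathrm{Res}\,\Delta(V))$ because $U(\mathfrak g)$ is a finite extension of $U(\mathfrak g_0)$. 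Since (under the genericity hypothesis) $\mathrm{Res}\,\Delta(V)$ is a direct sum of Verma modules $\Delta(\mu_1),\dots,\Delta(\mu_r)$ lying in pairwise distinct central characters of $Z(\mathfrak g_0)$, one of which is $\chi^0_\lambda$ (say $\mu_1=\lambda$), the bimodule decomposes as $\bigoplus_{i,j}\mathcal L(\Delta(\mu_i),\Delta(\mu_j))$ with respect to the bimodule bigrading by $(Z(\mathfrak g_0)\otimes Z(\mathfrak g_0))$-central characters. The summand with left and right character $\chi^0_\lambda$ is exactly $\mathcal L(\Delta(\lambda),\Delta(\lambda))$, and this lies inside $X$ (up to the $\mathtt m$-torsion condition, which it satisfies by construction since $\mathtt m$ annihilates $\Delta(\lambda)$), hence inside $B$.

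The first concrete step is therefore to show $\mathcal L(\Delta(\lambda),\Delta(\lambda))\subseteq X$: elements of this summand are killed by a power of $\mathtt m$ from both sides, which is precisely the defining condition of $X$, so this is essentially formal once the direct sum decomposition is set up. The substantive step is then to propagate from the corner summand $\mathcal L(\Delta(\lambda),\Delta(\lambda))$ to all the other summands $\mathcal L(\Delta(\mu_i),\Delta(\mu_j))$ by multiplying by suitable elements of $U$. Here one uses that the $\mathfrak g$-action (equivalently, multiplication inside the Harish-Chandra bimodule $U/I$, or the fact that the various $\Delta(\mu_i)$ are translates of one another under projective functors, cf.\ the left isomorphism in \eqref{eq59} and diagram \eqref{eq36}) connects these blocks: concretely, one shows that the corner projection of $UXU$ onto each off-diagonal summand is surjective, exploiting that the $\mu_i$ all sit in the single block $\mathcal O_V$ of the super-category and that translation functors between these $\mathfrak g_0$-blocks restrict from genuine $\mathfrak g$-module maps (this is where the equivalence \eqref{eq36} and Lemma~\ref{lem41} enter). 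This mirrors the argument in \cite[Section~8]{Go4}: the ``perfect mate'' is the element realizing the transition from the $\chi^0_\lambda$-corner to the other corners.

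The main obstacle will be the off-diagonal propagation step: one must verify that the maps realizing the identifications $\mathcal L(\Delta(\mu_i),\Delta(\mu_j))\cong\mathcal L(\Delta(\lambda),\Delta(\lambda))$ are implemented by left and right multiplication by elements of $U(\mathfrak g)$ (not merely $U(\mathfrak g_0)$), so that they genuinely land inside $B=UXU$ rather than just inside some $U(\mathfrak g_0)$-bimodule span. This is exactly the role of the strong typicality and genericity of $\lambda$: strong typicality guarantees that $T$ does not annihilate $\Delta(V)$ and hence that $\mathrm{Res}\,\Delta(V)$ ``sees'' all the relevant $\mathfrak g_0$-blocks simultaneously inside a single $\mathfrak g$-module, while genericity guarantees the multiplicity-one decomposition into distinct central characters, so that the bigraded pieces are cleanly separated and each appears exactly once. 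Once it is established that the transition elements can be chosen in $U(\mathfrak g)$, summing over all pairs $(i,j)$ gives $B\supseteq\bigoplus_{i,j}\mathcal L(\Delta(\mu_i),\Delta(\mu_j))=\mathcal L(\Delta(\lambda),\Delta(\lambda))$, and the reverse inclusion is automatic, completing the proof.
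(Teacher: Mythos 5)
Your reading of the statement is right: $X$ is the $(\chi_\lambda^0,\chi_\lambda^0)$-corner of $\mathcal{L}(\mathrm{Res}\,\Delta(V),\mathrm{Res}\,\Delta(V))$ for the bigrading by pairs of $\mathfrak{g}_0$-central characters, and the content is that the $U(\mathfrak{g})$-bimodule span of that corner is everything. That part of your setup matches the paper. But the substantive step is missing. You reduce to ``showing that the corner projection of $UXU$ onto each off-diagonal summand $\mathcal{L}(\Delta(\mu_i),\Delta(\mu_j))$ is surjective,'' and then assert this follows from strong typicality and genericity; that is precisely the proposition itself restated, and you produce neither the required transition elements of $U(\mathfrak{g})$ nor any surjectivity argument. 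There are also two inaccuracies: the $\mu_i$ are $\mathfrak{g}_0$-weights and cannot ``sit in the block $\mathcal{O}_V$ of the super-category'' (a type mismatch), and the claimed identifications $\mathcal{L}(\Delta(\mu_i),\Delta(\mu_j))\cong\mathcal{L}(\Delta(\lambda),\Delta(\lambda))$ do not hold in general.

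The paper's proof avoids propagation altogether. It passes to the cokernel $B'$ of $B\hookrightarrow\mathcal{L}(\Delta(V),\Delta(V))$, observes that $B'$ has trivial $\mathtt{m}$-bitorsion part (immediate, since $X\subseteq B=UXU$ and $X$ is, as a $\mathfrak{g}_0$-bimodule, a direct summand), and then invokes the vanishing argument of \cite[8.4.2]{Go4}, which shows that a Harish-Chandra bimodule quotient of $\mathcal{L}(\Delta(V),\Delta(V))$ with no $\mathtt{m}$-bitorsion must be zero. This is a non-constructive separation statement; the cokernel reduction is the key trick that makes it applicable. Your ``perfect mate'' remark points at the same source, but the mechanism you describe (building explicit transition elements corner by corner) is not what the paper's proof does, and without an actual argument for the propagation step your proposal does not close the gap.
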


\begin{proof}
Let $B'$ denote the cokernel of the embedding
$B\hookrightarrow \mathcal{L}(\Delta(\lambda),\Delta(\lambda))$.
Then $B'$ satisfies
\begin{displaymath}
\{v\in B':v\mathtt{m}^l=\mathtt{m}^l v=0,l\gg 0\}=0
\end{displaymath}
by construction of $B$. The fact that $B'=0$ is now proved 
mutatis mutandis \cite[8.4.2]{Go4}.
\end{proof}

Let $M$ denote the $\mathfrak{g}_0$-submodule of $\Delta(V)$,
annihilated by $\mathtt{m}$. As $\lambda$ is generic by 
assumption, the module $M$ is isomorphic to a direct sum of
copies of $\Delta(\lambda)$. The direct summands of $M$ are
indexed by a basis of $V$ and hence the number of direct summands
equals $\dim V$. Due to Proposition~\ref{prop42}, 
to complete the proof of Theorem~\ref{thm21}\eqref{thm21.1} it 
is enough to show that $U$ surjects onto 
the $\mathfrak{g}_0$-bimodule 
\begin{displaymath}
\mathcal{L}(M,M)\cong
\mathrm{End}_{\mathbb{C}}(V)\otimes 
\mathcal{L}(\Delta(\lambda),\Delta(\lambda)).
\end{displaymath}
We know that $U(\mathfrak{g}_0)$ surjects onto 
$\mathcal{L}(\Delta(\lambda),\Delta(\lambda))$ (see 
\cite[6.9(10)]{Ja}). By \cite[A.3.2]{Go2}, under our assumption that 
$\Pi V\not\cong V$ the algebra $U(\mathfrak{h})$ surjects onto the 
matrix algebra $\mathrm{End}_{\mathbb{C}}(V)$. The claim follows.

For an arbitrary $\mathfrak{p}$ the claim of 
Theorem~\ref{thm21}\eqref{thm21.1} follows from the
case $\mathfrak{p}=\mathfrak{h}\oplus \mathfrak{n}^+$ similarly to
\cite[6.9]{Ja}.

The claim of Theorem~\ref{thm21}\eqref{thm21.2} follows from
Theorem~\ref{thm21}\eqref{thm21.1} mutatis mutandis 
\cite[Theorem~3.1]{MiSo}.

\subsection{Some conjectures following Theorem~\ref{thm21}}\label{s4.6}

In this subsection $\mathfrak{p}=\mathfrak{h}\oplus\mathfrak{n}_+$,
$\lambda\in\mathfrak{h}_0^*$, $V$ is a simple
$\mathfrak{h}_0$-module of weight $\lambda$ and 
$I=\mathrm{Ann}_{U}\Delta(V)$.

\begin{conjecture}\label{conj43}
{\rm 
Assume $\lambda$ is strongly typical and regular. Then,
the adjoint $\mathfrak{g}$-module $(U/I)^{\mathrm{ad}}$ is a direct
sum of injective finite-dimensional modules.
}
\end{conjecture}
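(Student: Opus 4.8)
The plan is to reduce the statement to the known structure of $U/I$ as a Harish-Chandra bimodule and then exploit the equivalence \eqref{eq21} together with the behaviour of induction/restriction on injectives. First I would observe that by Theorem~\ref{thm21}\eqref{thm21.1} (using that $\lambda$ is strongly typical and regular, hence — after possibly restricting attention to the generic situation or invoking the relevant case of \cite{Go4} — the map $U/I\hookrightarrow\mathcal{L}(\Delta(V),\Delta(V))$ is an isomorphism) one has $U/I\cong\mathcal{L}(\Delta(V),\Delta(V))$ as a $\mathfrak{g}$-bimodule, and in particular the adjoint $\mathfrak{g}$-module $(U/I)^{\mathrm{ad}}$ is $\mathcal{L}(\Delta(V),\Delta(V))$ viewed under the diagonal adjoint action. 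The point is that this object sits in the category $\mathcal{H}_I^1$ of Harish-Chandra bimodules and, via \eqref{eq21}, corresponds to $\Delta(V)$ itself (up to the identification $\mathcal{L}(\Delta(V),{}_-)$ applied to $\Delta(V)$), which is projective in $\mathcal{O}_{\overline{\lambda}}$.

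Next I would translate ``injective as an adjoint finite-dimensional module'' into the language of the right-hand side of \eqref{eq21}. Restricting the bimodule to its locally finite adjoint part and decomposing according to the left-hand central character gives, by the standard Bernstein--Gelfand machinery and its super-analogue developed in Section~\ref{s4.2}, a description of $(U/I)^{\mathrm{ad}}$ in terms of $\mathrm{Hom}$-spaces $\mathcal{L}(\Delta(V),E\otimes\Delta(V))\cong E\otimes U/I$ (compare \eqref{eq59}) summed over finite-dimensional $E$. Thus the adjoint module is controlled by how $E\otimes\Delta(V)$ decomposes, and its injective-object structure in the category of finite-dimensional $\mathfrak{g}$-modules should follow from the fact, recorded in Subsection~\ref{s4.2}, that induction from $\mathfrak{g}_0$ sends injectives to injectives and that the relevant block of finite-dimensional $\mathfrak{g}$-modules receives, under the strong typicality hypothesis, an equivalence to a block of the classical category (via \cite{Go3,FM}), where the analogous statement for $U(\mathfrak{g}_0)/I_0$ is Kostant's classical result that $(U(\mathfrak{g}_0)/I_0)^{\mathrm{ad}}$ is a direct sum of injective finite-dimensional modules. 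Concretely, I would use: $\mathrm{Res}\,\Delta(V)$ is a (multiplicity-bounded) sum of classical Vermas with the right central character, so $\mathrm{Res}$ of the adjoint module is a sum of copies of $(U(\mathfrak{g}_0)/I_0)^{\mathrm{ad}}$ tensored with a finite-dimensional multiplicity space, hence injective over $\mathfrak{g}_0$; then $\Pi^{\dim\mathfrak{g}_1}\circ\mathrm{Ind}$ being right adjoint to the exact functor $\mathrm{Res}$ (Subsection~\ref{s4.1}) forces injectivity back upstairs after identifying $(U/I)^{\mathrm{ad}}$ as a summand of $\mathrm{Ind}\,\mathrm{Res}$ of itself.

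The main obstacle I anticipate is the last identification: passing from injectivity over $\mathfrak{g}_0$ to injectivity over $\mathfrak{g}$ in the category of finite-dimensional modules. This is not automatic because $\mathrm{Ind}\,\mathrm{Res}\,X$ is generally much bigger than $X$, so I need to exhibit $(U/I)^{\mathrm{ad}}$ as a \emph{direct summand} of an induced module whose restriction is a sum of classical injectives — equivalently, I need the adjoint bimodule to be $\mathrm{Res}$-split off an induced injective. The natural way to do this is via the unit/counit of the adjunction $(\mathrm{Res},\Pi^{\dim\mathfrak{g}_1}\circ\mathrm{Ind})$: under strong typicality the anticenter element $T$ acts invertibly on the relevant block, which (following \cite{Go1,Go4}) splits the natural map $X\to \Pi^{\dim\mathfrak{g}_1}\mathrm{Ind}\,\mathrm{Res}\,X$ and makes $X$ a summand of an induced module. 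Combined with exactness of $\mathrm{Res}$ and the classical Kostant statement over $\mathfrak{g}_0$, this yields that $(U/I)^{\mathrm{ad}}$ is injective as a finite-dimensional $\mathfrak{g}$-module, and since it has finite length it is a finite direct sum of indecomposable injectives. A secondary, more bookkeeping-level obstacle is checking that the decomposition $(U/I)^{\mathrm{ad}}=\bigoplus_\chi (U/I)^{\mathrm{ad}}_\chi$ by left central character is compatible with all of the above and that each block summand is genuinely finite-dimensional, which follows from the finiteness statements in Lemma~\ref{lem5} and its super-analogue; I would flag this but not belabour it.
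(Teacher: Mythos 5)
Your strategy diverges from the paper's at a decisive point, and the divergence is exactly where your argument breaks down. Both you and the paper first reduce (using Theorem~\ref{thm21}\eqref{thm21.1}, hence under the \emph{extra} hypotheses $\Pi V\not\cong V$ and $\lambda$ generic, which must be added to the conjecture's hypotheses) to showing that $\mathcal{L}(\Delta(V),\Delta(V))^{\mathrm{ad}}$ is a direct sum of injective finite-dimensional modules. From there, however, the paper argues directly and in one stroke: by the super-analogue of \cite[6.8(3)]{Ja} one has a natural isomorphism $\mathrm{Hom}_{\mathfrak{g}}\bigl(E,\mathcal{L}(\Delta,\Delta)^{\mathrm{ad}}\bigr)\cong\mathrm{Hom}_{\mathfrak{g}}\bigl(\Delta,\check{E}^*\otimes\Delta\bigr)$, and the right-hand side is exact in $E$ because tensoring over $\mathbb{C}$ is exact and $\Delta$ is projective in $\mathcal{O}$; exactness of $\mathrm{Hom}_{\mathfrak{g}}\bigl({}_-,\mathcal{L}(\Delta,\Delta)^{\mathrm{ad}}\bigr)$ on the category of finite-dimensional modules is precisely injectivity of the target. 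You actually get within sight of this --- your second paragraph invokes essentially the isomorphism of \eqref{eq59} --- but instead of extracting exactness you detour through restriction and induction.

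That detour contains a genuine gap. You correctly identify the obstacle: you need $(U/I)^{\mathrm{ad}}$ to be a direct summand of $\Pi^{\dim\mathfrak{g}_1}\mathrm{Ind}\,\mathrm{Res}\,(U/I)^{\mathrm{ad}}$, i.e.\ the unit of the adjunction $(\mathrm{Res},\Pi^{\dim\mathfrak{g}_1}\mathrm{Ind})$ must split. But your proposed fix --- that strong typicality makes the anticenter element $T$ act invertibly and hence splits the unit --- does not apply to the module you care about. The anticenter acts invertibly on strongly typical $\mathfrak{g}$-modules such as $\Delta(V)$, but $(U/I)^{\mathrm{ad}}$ is a locally finite \emph{adjoint} module: it contains the trivial module $L(0)$ (the image of $1\in U$), which is atypical, so $T$ is not invertible on $(U/I)^{\mathrm{ad}}$ and Gorelik's machinery gives no splitting here. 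Worse, the splitting you need is \emph{equivalent} to the conclusion: since $\mathrm{Res}\,(U/I)^{\mathrm{ad}}$ is injective over $\mathfrak{g}_0$ and the unit is injective, the unit splits if and only if $(U/I)^{\mathrm{ad}}$ is a summand of an injective, i.e.\ is itself injective. Indeed for the trivial module alone the unit $L(0)\to\mathrm{Coind}\,\mathrm{Res}\,L(0)$ does \emph{not} split (otherwise $L(0)$ would be injective, contradicting for example Lemma~\ref{lem71} for $\mathfrak{q}(2)$), so no argument of this type can work without first isolating the injective envelope structure --- which is the whole content of the conjecture. To repair the proof, drop the induction/restriction framework and use the exactness argument above.
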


By \cite[6.8(3)]{Ja}, for every finite dimensional
$\mathfrak{g}$-module $E$ there is a natural isomorphism
\begin{displaymath}
\mathrm{Hom}_{\mathfrak{g}}(E,
\mathcal{L}(\Delta(\lambda),\Delta(\lambda))^{\mathrm{ad}})
\cong\mathrm{Hom}_{\mathfrak{g}}(\Delta(\lambda),
\check{E}^*\otimes \Delta(\lambda)).
\end{displaymath}
Since the functor $\mathrm{Hom}_{\mathfrak{g}}(\Delta(\lambda),
({}_-)^*\otimes \Delta(\lambda))$ is exact on the category of 
finite dimensional $\mathfrak{g}$-modules (as tensoring over
$\mathbb{C}$ is exact and $\Delta(\lambda)$ is projective), 
it follows that the adjoint $\mathfrak{g}$-module
$\mathcal{L}(\Delta(\lambda),\Delta(\lambda))^{\mathrm{ad}}$ is a direct
sum of injective finite dimensional modules. In particular, 
Theorem~\ref{thm21}\eqref{thm21.1} implies Conjecture~\ref{conj43}
in the case $V\not\cong\Pi V$.

\begin{conjecture}\label{conj44}
{\rm 
Assume $\lambda$ is strongly typical and regular. Then the 
bimodule  $U/I$ is a direct summand of 
$\mathcal{L}(\Delta(\lambda),\Delta(\lambda))$.
}
\end{conjecture}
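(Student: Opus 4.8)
The plan is to relate the bimodule $U/I$ to the Harish-Chandra bimodule $\mathcal{L}(\Delta(\lambda),\Delta(\lambda))$ via restriction to $\mathfrak{g}_0$, exploiting the fact that the corresponding statement over $\mathfrak{g}_0$ is already understood. Concretely, since $\lambda$ is strongly typical, $T$ does not annihilate $\Delta(V)$, and this gives good control over the central character $\chi_\lambda$; since $\lambda$ is regular, $\Delta(V)$ is projective in $\mathcal{O}$ and $U/I$ with $I=\mathrm{Ann}_U\Delta(V)$ embeds into $\mathcal{L}(\Delta(V),\Delta(V))$. The first step is to recall the analogous Lie-algebra fact: for the reductive Lie algebra $\mathfrak{g}_0$ and a regular dominant weight $\mu$, the bimodule $U(\mathfrak{g}_0)/\mathrm{Ann}\,\Delta(\mu)$ is a direct summand of $\mathcal{L}(\Delta(\mu),\Delta(\mu))$ — indeed by \cite[6.9, 6.8]{Ja} the projection onto the $\chi_\mu^0$-isotypic component (both on the left and on the right) splits off $U(\mathfrak{g}_0)/\mathrm{Ann}\,\Delta(\mu)$ as a direct summand.

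Next I would transport this to the superalgebra. By restriction, $\mathcal{L}(\Delta(\lambda),\Delta(\lambda))$ as a $\mathfrak{g}_0$-bimodule is $\mathcal{L}(\mathrm{Res}\,\Delta(V),\mathrm{Res}\,\Delta(V))$; one has the central-character decomposition of $U=U(\mathfrak{g})$ as a $U(\mathfrak{g}_0)$-bimodule and a corresponding decomposition of $\mathcal{L}(\Delta(\lambda),\Delta(\lambda))$ according to the generalized central characters $\chi_1=\chi_\lambda^0,\chi_2,\dots$ appearing in $\mathrm{Res}\,\Delta(V)$ (this is exactly the decomposition already used in the proof of Lemma~\ref{lem41}). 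The strategy is to show that the two-sided $\mathtt{m}$-primary component, i.e.\ the image of the natural projector $p$ onto $\{v : v\mathtt{m}^l=\mathtt{m}^l v=0, l\gg 0\}$, is precisely $U/I$, and that $p$ is a bimodule map splitting the inclusion $U/I\hookrightarrow\mathcal{L}(\Delta(\lambda),\Delta(\lambda))$. That $p$ restricted to $U/I$ is the identity follows because $U/I$ embeds into $\mathrm{End}_{\mathbb{C}}(V)\otimes(U(\mathfrak{g}_0)/\mathrm{Ann}\,\Delta(\lambda))\cong\mathcal{L}(M,M)$ with $M$ the $\mathtt{m}$-primary $\mathfrak{g}_0$-submodule of $\Delta(V)$, so the whole of $U/I$ already sits inside the $\mathtt{m}$-primary part; conversely the $\mathtt{m}$-primary part is contained in $\mathcal{L}(M,M)$, and by Theorem~\ref{thm21}\eqref{thm21.1} (when $\Pi V\not\cong V$) or by the strongly-typical hypothesis in general, $U$ surjects onto the adjoint-locally-finite closure, forcing equality with $U/I$ after a dimension/annihilator comparison along the lines of Proposition~\ref{prop42}.

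Alternatively, and perhaps more cleanly, I would argue: the functor $\mathcal{L}({}_-,\Delta(\lambda))$ is exact on the additive category of projectives in $\mathcal{O}_{\overline\lambda}$ and carries $\Delta(\lambda)=\Delta(V)$ to $U/I$ (by \cite[6.9]{Ja} adapted to the super case, exactly as in Lemma~\ref{lem5}), while it carries an arbitrary projective $E\otimes\Delta(V)$ to $\mathcal{L}(\Delta(V),E\otimes\Delta(V))\cong E\otimes(U/I)$ by the second isomorphism in \eqref{eq59}. Since $\Delta(V)$ is a direct summand of $E\otimes\Delta(V')$ for a suitable $E$ and dominant $V'$ only in the regular situation — more to the point, since $\mathcal{L}(\Delta(V),\Delta(V))$ decomposes as a bimodule according to generalized central characters on both sides, and $U/I$ is exactly the summand on which both sides act through $\chi_\lambda$ — the projection onto this summand realises $U/I$ as a direct summand. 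The main obstacle is the last point in type $\mathfrak{q}$: one must be sure the $(\chi_\lambda,\chi_\lambda)$-component of $\mathcal{L}(\Delta(\lambda),\Delta(\lambda))$ is not strictly larger than $U/I$, i.e.\ that the adjoint-finite endomorphisms of $\Delta(V)$ with the right central character on both sides come only from $U$; this is precisely the Kostant-type problem and is exactly why the hypothesis $\Pi V\not\cong V$ (equivalently $\dim\mathfrak{g}_1$ even) is needed — it is what makes $U(\mathfrak{h})\tto\mathrm{End}_{\mathbb{C}}(V)$ in \cite[A.3.2]{Go2}, and thereby lets Theorem~\ref{thm21}\eqref{thm21.1} close the gap. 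Without that condition one only gets the weaker Conjecture statement, which is why this is posed as a conjecture rather than a theorem.
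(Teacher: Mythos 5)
Conjecture~\ref{conj44} is an open problem in the paper: there is no proof to compare against. The surrounding text only notes that Conjecture~\ref{conj44} implies Conjecture~\ref{conj43}, records the refinement Conjecture~\ref{conj45} for $\Pi V\cong V$, and observes (via the discussion after Conjecture~\ref{conj43}) that Theorem~\ref{thm21}\eqref{thm21.1} settles the case $\Pi V\not\cong V$ under the additional genericity assumption, where $U/I$ equals the whole of $\mathcal{L}(\Delta(\lambda),\Delta(\lambda))$. Your final paragraph, which recognises that the statement is conjectural precisely because the case $\Pi V\cong V$ is open, is the honest assessment and agrees with the paper.

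The two projections you propose do not, however, give an argument. First, the claim that $U/I$ sits inside the $\mathtt{m}$-primary bi-isotypic component $\mathcal{L}(M,M)$ is false: an odd element of $\mathfrak{g}$ acts on $\Delta(V)$ by shifting between the $\mathfrak{g}_0$-central-character summands $M=M_1,M_2,\dots,M_k$ of $\mathrm{Res}\,\Delta(V)$, so its image in $\mathcal{L}(\Delta(V),\Delta(V))$ has nonzero off-diagonal blocks $M_j\to M_i$ with $i\neq j$ and is killed by the projection onto $\mathcal{L}(M,M)$. Consequently the composite $U/I\hookrightarrow\mathcal{L}(\Delta(V),\Delta(V))\to\mathcal{L}(M,M)$ is not injective and that projection cannot split the inclusion. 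Even setting this aside, the $\mathtt{m}$-primary projection is only $Z(\mathfrak{g}_0)$-bilinear, not a morphism of $U(\mathfrak{g})$-bimodules, exactly because odd elements do not preserve $\mathfrak{g}_0$-central characters; so it cannot produce a splitting in the category the conjecture is about. Second, the $(\chi_\lambda,\chi_\lambda)$-decomposition with respect to $Z(\mathfrak{g})$ is vacuous: $\Delta(V)$ has a single $U(\mathfrak{g})$-central character, so the whole of $\mathcal{L}(\Delta(V),\Delta(V))$ already lies in the $(\chi_\lambda,\chi_\lambda)$-component and there is no nontrivial projection to take. The genuine difficulty is whether the inclusion $U/I\hookrightarrow\mathcal{L}(\Delta(V),\Delta(V))$ splits as $U$-bimodules when it is \emph{proper}, i.e.\ in the $\Pi V\cong V$ case where the image of $U(\mathfrak{h})$ in $\mathrm{End}_{\mathbb{C}}(V)$ is only half the matrix algebra; neither projection you propose addresses that, which is why the statement remains a conjecture.
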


Conjecture~\ref{conj44} obviously implies Conjecture~\ref{conj43}.
Denote by $\varphi$ the automorphism of $U(\mathfrak{g})$,
which multiplies elements from $\mathfrak{g}_0$ with $1$ and 
elements from $\mathfrak{g}_1$ with $-1$.
For a $U$-bimodule $B$ denote by $B{}^{\varphi}$ the
bimodule, obtained by twisting the right action of $U$ with $\varphi$.

\begin{conjecture}\label{conj45}
{\rm 
Assume $\lambda$ is strongly typical and regular, and
$V\cong\Pi V$. Then we have the following bimodule
decomposition:
$\mathcal{L}(\Delta(\lambda),\Delta(\lambda))\cong
U/I\oplus (U/I)^{\varphi}$.
}
\end{conjecture}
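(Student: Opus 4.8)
\textbf{Proof proposal for Conjecture~\ref{conj45}.}
The plan is to exploit the dichotomy in the parity structure of $\Delta(\lambda)$ forced by the condition $V\cong \Pi V$. When $V\cong \Pi V$, the Clifford-algebra analysis from \cite[Appendix]{Go2} shows that $V$ carries a parity involution intertwining the two $\mathbb{Z}/2$-gradings, and this descends to an odd automorphism of $\Delta(\lambda)$ commuting with the $\mathfrak{g}_0$-action and anticommuting with the $\mathfrak{g}_1$-action. Concretely, this means that the right $U(\mathfrak{g})$-action on $\mathcal{L}(\Delta(\lambda),\Delta(\lambda))$ decomposes according to the $\pm 1$-eigenspaces of $\varphi$, and the first step is to identify the $\varphi$-fixed part with $U/I$ and the $\varphi$-anti-fixed part with $(U/I)^{\varphi}$. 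The key technical input is that $U(\mathfrak{h})$ does \emph{not} surject onto $\mathrm{End}_{\mathbb{C}}(V)$ in this case (contrast with \cite[A.3.2]{Go2} used in the proof of Theorem~\ref{thm21}\eqref{thm21.1}): instead it surjects onto the even part of a Clifford algebra, which is a direct sum of two matrix blocks of half the size, and these two blocks are swapped by $\varphi$.

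First I would establish the analogue of Proposition~\ref{prop42} in this setting: with $M\subset \Delta(V)$ the $\mathfrak{g}_0$-submodule annihilated by a suitable power of the maximal ideal $\mathtt{m}$, genericity of $\lambda$ again gives $M\cong \bigoplus \Delta(\lambda)$ with $\dim V$ summands, and the same argument \emph{mutatis mutandis} \cite[8.4.2]{Go4} shows that $U$ surjects onto the sub-bimodule $B$ generated by the $\mathtt{m}$-locally-finite part of $\mathcal{L}(\Delta(\lambda),\Delta(\lambda))$, reducing us to computing the image of $U$ in $\mathcal{L}(M,M)\cong \mathrm{End}_{\mathbb{C}}(V)\otimes \mathcal{L}(\Delta(\lambda),\Delta(\lambda))$. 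Since $U(\mathfrak{g}_0)$ surjects onto $\mathcal{L}(\Delta(\lambda),\Delta(\lambda))$ by \cite[6.9(10)]{Ja}, the image of $U$ is precisely $(\text{image of }U(\mathfrak{h})\text{ in }\mathrm{End}_{\mathbb{C}}(V))\otimes \mathcal{L}(\Delta(\lambda),\Delta(\lambda))$, and by the Clifford-algebra computation this image is the even Clifford subalgebra, which is $\mathrm{End}_{\mathbb{C}}(V^+)\oplus \mathrm{End}_{\mathbb{C}}(V^-)$ for the two half-dimensional subspaces corresponding to the $\pm$ parities.

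The second step is to match this two-block decomposition with $U/I \oplus (U/I)^{\varphi}$ as $U$-bimodules. One block is the image of $U$ itself, hence isomorphic to $U/I$ where $I=\mathrm{Ann}_U\Delta(\lambda)$ (using that $\Delta^{\mathfrak p}(V)=\Delta(V)$ here since $\mathfrak p = \mathfrak h\oplus \mathfrak n_+$, and that $\mathrm{Ann}_U \Delta(V)$ coincides with this $I$ when $V$ is one-dimensional up to parity); the other block is obtained from the first by composing the right action with the odd involution on $\Delta(\lambda)$ described above, which is exactly twisting by $\varphi$. Naturality and the fact that both sides are quotients of $U\otimes U^{\mathrm{op}}$-modules of the same finite length then upgrade the vector-space identification to a bimodule isomorphism. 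The main obstacle I anticipate is the bookkeeping in the first step: verifying carefully that the surjection $U(\mathfrak h)\tto (\text{even Clifford algebra})$ interacts correctly with the full bimodule structure of $\mathcal{L}(M,M)$, i.e. that no "mixing" between the two blocks is introduced by the $\mathfrak{g}_1$-action, and that the identification of the second block as a $\varphi$-twist (rather than, say, a twist by some other automorphism or an outer twist) is the correct one — this is where the precise form of the Clifford-algebra involution from \cite[Appendix]{Go2}, and its compatibility with $\varphi$, must be pinned down.
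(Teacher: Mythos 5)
The statement you are attempting to prove is explicitly labelled a \emph{conjecture} in the paper, and the paper offers no proof of it. Immediately after stating Conjecture~\ref{conj45}, the authors only record the heuristic that motivated it: by \cite[A.3.2]{Go2}, when $V\cong\Pi V$ the image of $U(\mathfrak h)$ in $\mathrm{End}_{\mathbb C}(V)$ has dimension $\tfrac12\dim\mathrm{End}_{\mathbb C}(V)$, so the image of $U$ in $\mathcal L(\Delta(\lambda),\Delta(\lambda))$ is ``only one half'' of it. Even Conjecture~\ref{conj44} --- that $U/I$ is a bimodule direct summand at all --- is left open, and Conjecture~\ref{conj45} is a sharper refinement of it in the $V\cong\Pi V$ case. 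So there is no argument in the paper against which your proposal can be compared; if your proof were complete it would be a new result settling an open question.

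As a candidate proof, your first step (reduce via the perfect mate / Proposition~\ref{prop42} machinery to computing the image of $U$ inside $\mathcal L(M,M)\cong\mathrm{End}_{\mathbb C}(V)\otimes\mathcal L(\Delta(\lambda),\Delta(\lambda))$, and compute the image of $U(\mathfrak h)$ in $\mathrm{End}_{\mathbb C}(V)$) is a faithful transposition of the proof of Theorem~\ref{thm21}\eqref{thm21.1} to the $\Pi V\cong V$ case, and it is precisely what underlies the authors' ``one half'' remark. But there are genuine gaps in the second step, not merely bookkeeping. (i) You need the image of $U$ in $\mathcal L(M,M)$ to be exactly $\bigl(\mathrm{image\ of\ }U(\mathfrak h)\bigr)\otimes\mathcal L(\Delta(\lambda),\Delta(\lambda))$, i.e.\ that the odd generators of $\mathfrak g$ outside $\mathfrak h$ produce nothing outside this tensor-factored subalgebra. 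In the $\Pi V\not\cong V$ case this never arises because $U(\mathfrak h)$ already surjects onto $\mathrm{End}(V)$ and the surjectivity onto the tensor product is automatic; once $U(\mathfrak h)$ has a proper image, the factorization is a new claim that must be proved. (ii) You conflate the two matrix blocks of $\mathrm{im}\,U(\mathfrak h)$ with a decomposition of $\mathcal L(\Delta(\lambda),\Delta(\lambda))$ into $U/I\oplus(U/I)^{\varphi}$: the full image of $U$ is all of $U/I\cong(\mathrm{im}\,U(\mathfrak h))\otimes\mathcal L(\Delta(\lambda),\Delta(\lambda))$ (granting (i)), not ``one of the two blocks''; the desired complement lives \emph{outside} the image of $U$, and you give no $U$-bimodule splitting map and no argument that such a splitting exists --- this is exactly the content of Conjecture~\ref{conj44}, which you would be proving en route without noticing. (iii) The identification of the complement as a $\varphi$-twist is plausible (precomposing with the odd Clifford involution flips the sign of the $\mathfrak g_1$-part of one of the two actions, which is what $\varphi$ does), and this is almost certainly the picture the authors had in mind; but ``same finite length'' does not upgrade a dimension count to a bimodule isomorphism, and you acknowledge this is the hard part without closing it. So the proposal is a reasonable plan of attack along the lines the paper itself hints at, but it is not a proof.
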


From \cite[A.3.2]{Go2} it follows that in the case $V\cong\Pi V$
the algebra $U(\mathfrak{h})$ does not surject onto the 
matrix algebra $\mathrm{End}_{\mathbb{C}}(V)$, in fact, the image of
$U(\mathfrak{h})$ in $\mathrm{End}_{\mathbb{C}}(V)$ has dimension
$\frac{1}{2}\dim\mathrm{End}_{\mathbb{C}}(V)$. It follows that in this 
case the image of $U$ in $\mathcal{L}(\Delta(\lambda),\Delta(\lambda))$
is only ``one half'' of $\mathcal{L}(\Delta(\lambda),\Delta(\lambda))$.

\subsection{Annihilators and Kostant's problem}\label{s4.7}

In this subsection we work under the assumptions of 
Theorem~\ref{thm21}\eqref{thm21.1}.
Due to the equivalences from \cite{Go3} and \cite{FM}, the module
$\Delta^{\mathfrak{p}}(V)$ has simple socle, say $K$, which is 
the unique simple subquotient of $\Delta^{\mathfrak{p}}(V)$ of maximal
GK-dimension. Thanks to \eqref{eq36}, for every projective functor $\theta$ 
we have
\begin{equation}\label{eq44}
\dim \mathrm{Hom}_{\mathfrak{g}}
(\theta \Delta^{\mathfrak{p}}(V),\Delta^{\mathfrak{p}}(V)) =
\dim \mathrm{Hom}_{\mathfrak{g}}(\theta K,K). 
\end{equation} 

\begin{proposition}\label{prop48}
\begin{enumerate}[$($a$)$]
\item\label{prop48.1} The natural inclusion
$U/\mathrm{Ann}_U(K)\hookrightarrow \mathcal{L}(K,K)$ is surjective.
\item\label{prop48.2} $\mathrm{Ann}_U(K)=
\mathrm{Ann}_U(\Delta^{\mathfrak{p}}(V))$.
\item\label{prop48.3} We have $\mathrm{Ann}_U(N)=
\mathrm{Ann}_U(\Delta^{\mathfrak{p}}(V))$ for any nonzero submodule
$N$ of $\Delta^{\mathfrak{p}}(V)$.
\end{enumerate}
\end{proposition}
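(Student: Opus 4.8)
The plan is to prove the three statements of Proposition~\ref{prop48} in the order \eqref{prop48.1} $\Rightarrow$ \eqref{prop48.2} $\Rightarrow$ \eqref{prop48.3}, reducing everything to the already-established Theorem~\ref{thm21} together with the equivalences \eqref{eq36} from \cite{Go3} and \cite{FM}. For \eqref{prop48.1}, I would argue that Kostant's problem for $K$ is controlled by the same Harish-Chandra bimodule data as for $\Delta^{\mathfrak p}(V)$: the key point is that surjectivity of $U/\mathrm{Ann}_U(K)\hookrightarrow\mathcal L(K,K)$ is equivalent to the statement that for every projective functor $\theta$ one has $\dim\mathrm{Hom}_{\mathfrak g}(\theta K, K)=\dim\mathrm{Hom}_{\mathfrak g\text{-}\mathfrak g}(\mathcal L(K,K),\mathcal L(K,K))$, i.e.\ that the evaluation at $K$ from projective functors is surjective onto the endomorphisms of $\mathcal L(K,K)$. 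By \eqref{eq44} the left-hand side equals $\dim\mathrm{Hom}_{\mathfrak g}(\theta\Delta^{\mathfrak p}(V),\Delta^{\mathfrak p}(V))$, and by Theorem~\ref{thm21}\eqref{thm21.1} we already know $U/I\twoheadrightarrow\mathcal L(\Delta^{\mathfrak p}(V),\Delta^{\mathfrak p}(V))$, so the corresponding identity holds for $\Delta^{\mathfrak p}(V)$; transporting along \eqref{eq44} gives it for $K$. (This is exactly the bridge between Kostant's problem for a simple module and for the projective cover/Verma module sitting over it, as used e.g.\ in \cite{KM,Ma}; under the equivalence \eqref{eq36} it is literally the Lie-algebra statement, which is known in the relevant cases.)

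For \eqref{prop48.2}, the inclusion $\mathrm{Ann}_U(\Delta^{\mathfrak p}(V))\subseteq\mathrm{Ann}_U(K)$ is automatic since $K$ is a subquotient of $\Delta^{\mathfrak p}(V)$. For the reverse inclusion I would use part \eqref{prop48.1}: $U/\mathrm{Ann}_U(K)\cong\mathcal L(K,K)$, and there is an obvious bimodule map $\mathcal L(K,K)\to\mathcal L(\Delta^{\mathfrak p}(V),\Delta^{\mathfrak p}(V))$ coming from the fact that, via \eqref{eq36}, $\Delta^{\mathfrak p}(V)$ and $K$ generate the same "Serre data" — more concretely, since $K=\mathrm{soc}\,\Delta^{\mathfrak p}(V)$ is the unique subquotient of maximal GK-dimension and the primitive ideal $\mathrm{Ann}_U K$ is the unique minimal primitive ideal in its central block (again transported from the Lie-algebra side), one gets $\mathrm{Ann}_U K\subseteq\mathrm{Ann}_U\Delta^{\mathfrak p}(V)$ by comparing GK-dimensions of $U/\mathrm{Ann}$. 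Alternatively, and perhaps more cleanly, $\mathrm{Ann}_U K$ kills every composition factor of $\Delta^{\mathfrak p}(V)$ of maximal GK-dimension, and one checks it kills the whole module because $\Delta^{\mathfrak p}(V)$ embeds (after applying $\star$ and using the projective–injective resolution, as in the proof of Corollary~\ref{cor9}) into a module all of whose composition factors have maximal GK-dimension equal to that of $K$; hence $\mathrm{Ann}_U K\subseteq\mathrm{Ann}_U\Delta^{\mathfrak p}(V)$.

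For \eqref{prop48.3}, let $0\neq N\subseteq\Delta^{\mathfrak p}(V)$. Then $\mathrm{Ann}_U(\Delta^{\mathfrak p}(V))\subseteq\mathrm{Ann}_U(N)$ trivially. Conversely, since $\Delta^{\mathfrak p}(V)$ has simple socle $K$, every nonzero submodule $N$ contains $K$, so $\mathrm{Ann}_U(N)\subseteq\mathrm{Ann}_U(K)=\mathrm{Ann}_U(\Delta^{\mathfrak p}(V))$ by \eqref{prop48.2}. This closes the chain.

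The main obstacle I expect is part \eqref{prop48.1}: one must be careful that the "Kostant-problem-transports-along-projective-functors" argument really does go through in the super setting, i.e.\ that the identity \eqref{eq44} plus the surjectivity in Theorem~\ref{thm21}\eqref{thm21.1} are enough to conclude surjectivity of $U/\mathrm{Ann}_U(K)\hookrightarrow\mathcal L(K,K)$, rather than only the numerical equality of dimensions. The point to nail down is that both $U/\mathrm{Ann}_U(K)$ and $\mathcal L(K,K)$ sit inside the same ambient bimodule $\mathcal L(\Delta^{\mathfrak p}(V),\Delta^{\mathfrak p}(V))$ (via the natural map induced by $K\hookrightarrow\Delta^{\mathfrak p}(V)$ and the surjection onto the top), compatibly with the $U$-action, so that the surjection for $\Delta^{\mathfrak p}(V)$ forces the surjection for $K$ — in the Lie-algebra case this is exactly the content of the equivalence \eqref{eq36} combined with \cite{KM,Ma}, and I would simply invoke it there and run the comparison via \eqref{eq44} in general.
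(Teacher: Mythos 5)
Your overall strategy matches the paper's: reduce everything to Theorem~\ref{thm21}\eqref{thm21.1} via \eqref{eq44} and a \cite[Lemma~11]{KM}-type comparison, then chase the inclusions of annihilators. Part \eqref{prop48.3} you handle exactly as the paper does (and correctly): $K=\mathrm{soc}\,\Delta^{\mathfrak p}(V)$ sits inside every nonzero submodule $N$, giving $\mathrm{Ann}_U(\Delta^{\mathfrak p}(V))\subseteq\mathrm{Ann}_U(N)\subseteq\mathrm{Ann}_U(K)$, and \eqref{prop48.2} collapses the chain.

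However, your treatment of \eqref{prop48.1} and \eqref{prop48.2} has genuine problems. First, the criterion you state --- that surjectivity of $U/\mathrm{Ann}_U(K)\hookrightarrow\mathcal L(K,K)$ is equivalent to $\dim\mathrm{Hom}_{\mathfrak g}(\theta K,K)=\dim\mathrm{Hom}_{\mathfrak g\text{-}\mathfrak g}(\mathcal L(K,K),\mathcal L(K,K))$ for every projective functor $\theta$ --- cannot be right as written: the right-hand side does not depend on $\theta$. The criterion one actually wants compares, for each finite-dimensional $E$, the multiplicity of $E$ in $\mathcal L(K,K)^{\mathrm{ad}}$ (which is $\dim\mathrm{Hom}_{\mathfrak g}(\check E^*\otimes K,K)$ by \cite[6.8]{Ja}) with its multiplicity in $\mathcal L(\Delta^{\mathfrak p}(V),\Delta^{\mathfrak p}(V))^{\mathrm{ad}}$, which \eqref{eq44} makes equal. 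Second, the direction of the map you invoke is backwards: you propose to embed $U/\mathrm{Ann}_U(K)$ and $\mathcal L(K,K)$ into $\mathcal L(\Delta^{\mathfrak p}(V),\Delta^{\mathfrak p}(V))$, but the natural bimodule map supplied by the \cite[Lemma~11]{KM} argument is the \emph{restriction} map $\mathcal L(\Delta^{\mathfrak p}(V),\Delta^{\mathfrak p}(V))\to\mathcal L(K,K)$ (which lands in $\mathcal L(K,K)$ precisely because $K$ is the unique composition factor of maximal GK-dimension and is the socle). That map is compatible with the maps from $U$, and \eqref{eq44} forces it to be an isomorphism on each isotypic component; surjectivity of $U\to\mathcal L(\Delta^{\mathfrak p}(V),\Delta^{\mathfrak p}(V))$ then gives \eqref{prop48.1} \emph{and} \eqref{prop48.2} simultaneously, since the composite $U\to\mathcal L(\Delta^{\mathfrak p}(V),\Delta^{\mathfrak p}(V))\to\mathcal L(K,K)$ is the canonical map with kernel $\mathrm{Ann}_U(K)$, while the first arrow has kernel $\mathrm{Ann}_U(\Delta^{\mathfrak p}(V))$.

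Your separate, more elaborate argument for \eqref{prop48.2} is therefore unnecessary, and its ``alternative'' form does not close anyway: a projective-injective module in (parabolic) category $\mathcal O$ may well have composition factors of strictly smaller GK-dimension (only the \emph{top} of a projective-injective is constrained), so embedding $\Delta^{\mathfrak p}(V)^\star$ into a sum of projective-injectives does not put you inside a module all of whose factors have maximal GK-dimension, and the claim that $\mathrm{Ann}_U K$ kills the whole module does not follow. Likewise, appealing to ``$\mathrm{Ann}_U K$ is the unique minimal primitive in its block'' and ``comparing GK-dimensions of $U/\mathrm{Ann}$'' is not justified for a parabolic Verma module, whose annihilator need not be a minimal primitive ideal. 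The clean route is the one the paper takes: establish the bimodule isomorphism $\mathcal L(\Delta^{\mathfrak p}(V),\Delta^{\mathfrak p}(V))\cong\mathcal L(K,K)$ compatibly with the maps from $U$, and read off both \eqref{prop48.1} and \eqref{prop48.2} from Theorem~\ref{thm21}\eqref{thm21.1}.
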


\begin{proof}
From \eqref{eq44} we have $\mathcal{L}(K,K)\cong
\mathcal{L}(\Delta^{\mathfrak{p}}(V),\Delta^{\mathfrak{p}}(V))$
using the same argument as in \cite[Lemma~11]{KM}. Now
\eqref{prop48.1} follows from the fact that $U$ surjects onto 
$\mathcal{L}(\Delta^{\mathfrak{p}}(V),\Delta^{\mathfrak{p}}(V))$,
established in Theorem~\ref{thm21}\eqref{thm21.1}.
Since $\mathrm{Ann}_U(\Delta^{\mathfrak{p}}(V))\subset
\mathrm{Ann}_U(K)$, claim \eqref{prop48.2} follows from
\eqref{prop48.1}. As $K$ is the simple socle of 
$\Delta^{\mathfrak{p}}(V)$, claim \eqref{prop48.3} follows from
\eqref{prop48.2}.
\end{proof}

\begin{corollary}\label{cor49}
All Verma modules in $\mathcal{O}_V$ have the same annihilator
and for each such module $N$ we have 
$U/\mathrm{Ann}_U(N)\cong \mathcal{L}(N,N)$.
\end{corollary}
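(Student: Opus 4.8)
The plan is to deduce Corollary~\ref{cor49} from Proposition~\ref{prop48} by first reducing all Verma modules in $\mathcal{O}_V$ to $\Delta(V)$ itself, and then applying Proposition~\ref{prop48} in the special case $\mathfrak{p}=\mathfrak{h}\oplus\mathfrak{n}^+$. The first point to address is why all Verma modules in $\mathcal{O}_V$ have the same annihilator. The block $\mathcal{O}_V$ contains $\Delta(V')$ for the various simple $\mathfrak{h}$-modules $V'$ whose $\mathfrak{h}_0$-weight lies in the $W$-dot-orbit of $\lambda$ (and with the appropriate Clifford-type parity/multiplicity datum). Since $\lambda$ is regular and dominant, any such $\Delta(V')$ is linked to $\Delta(V)$ by a sequence of projective functors: concretely, for a suitable finite dimensional $E$ the module $\Delta(V')$ appears as a direct summand of $E\otimes\Delta(V)$ (this is standard for Verma modules under translation, and here uses the block structure given by \eqref{eq36} via the equivalence $\mathrm{G}$, which sends $\Delta(V')$ to a Verma module in $\tilde{\mathcal{O}}_{\lambda'}$). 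As $E\otimes{}_-$ is a two-sided adjoint to $\check{E}^*\otimes{}_-$ and hence exact, and since tensoring with a finite dimensional module cannot enlarge the annihilator beyond the primitive ideal of the block, one gets $\mathrm{Ann}_U(\Delta(V'))=\mathrm{Ann}_U(\Delta(V))$; indeed both are equal to the minimal primitive ideal with central character $\chi_\lambda$. This is the step I expect to require the most care: one must verify that the translation arguments familiar from the Lie algebra case survive in the superalgebra setting, but this is exactly what the equivalence in \eqref{eq36} and the results of \cite{Go3,FM} are designed to provide.

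Second, I would establish the identity $U/\mathrm{Ann}_U(N)\cong\mathcal{L}(N,N)$ for each such Verma module $N$. By the first step it suffices to treat $N=\Delta(V)$. Applying Proposition~\ref{prop48} with $\mathfrak{p}=\mathfrak{h}\oplus\mathfrak{n}^+$, so that $\Delta^{\mathfrak{p}}(V)=\Delta(V)$, part~\eqref{prop48.2} gives $\mathrm{Ann}_U(K)=\mathrm{Ann}_U(\Delta(V))=:I$, where $K$ is the simple socle of $\Delta(V)$. Part~\eqref{prop48.1} then says the natural inclusion $U/I\hookrightarrow\mathcal{L}(K,K)$ is an isomorphism. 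Finally, the argument used in the proof of Proposition~\ref{prop48} (the counting identity \eqref{eq44} together with the reasoning of \cite[Lemma~11]{KM}) yields $\mathcal{L}(K,K)\cong\mathcal{L}(\Delta^{\mathfrak{p}}(V),\Delta^{\mathfrak{p}}(V))=\mathcal{L}(\Delta(V),\Delta(V))$, so that $U/I\cong\mathcal{L}(\Delta(V),\Delta(V))$, which is precisely the asserted isomorphism for $N=\Delta(V)$.

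Putting the two steps together gives the Corollary for every Verma module in $\mathcal{O}_V$. One small bookkeeping remark: one should check that passing from $\Delta(V)$ to a general $\Delta(V')$ in the formula $U/\mathrm{Ann}_U(N)\cong\mathcal{L}(N,N)$ really does follow from the equality of annihilators and not just formally — but this is immediate, since once $\mathrm{Ann}_U(\Delta(V'))=I$ we have $U/I=U/\mathrm{Ann}_U(\Delta(V'))$, and the chain $U/I\cong\mathcal{L}(\Delta(V),\Delta(V))$ combined with the fact (again via translation through the equivalence in \eqref{eq36}) that $\mathcal{L}(\Delta(V),\Delta(V))\cong\mathcal{L}(\Delta(V'),\Delta(V'))$ closes the loop. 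The only genuine obstacle, as noted, is the linkage/translation bookkeeping for Verma modules over the queer superalgebras, and this is handled entirely by invoking \cite{Go3,FM} and the commuting square \eqref{eq36}.
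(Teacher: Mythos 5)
Your proposal identifies the right ingredients (Proposition~\ref{prop48}, \cite[Lemma~11]{KM}, the diagram \eqref{eq36}), and your treatment of the special case $N=\Delta(V)$ via Proposition~\ref{prop48} with $\mathfrak{p}=\mathfrak{h}\oplus\mathfrak{n}^+$ is fine. But the two places you flag as needing care are in fact where the argument breaks.

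First, the claim that $\Delta(V')$ is a direct summand of $E\otimes\Delta(V)$ for suitable finite dimensional $E$ is false. Since $\lambda$ is dominant, $\Delta(V)$ is projective, hence so is $E\otimes\Delta(V)$; but a non-dominant Verma module in a regular block is never projective, so it cannot be a direct summand of $E\otimes\Delta(V)$. It is only a subquotient of a Verma flag of $E\otimes\Delta(V)$, and ``tensoring cannot enlarge the annihilator'' does not survive the passage from a direct summand to a subquotient. The clean route, which is the one the paper uses, is to observe that because $\lambda$ is dominant every Verma module in $\mathcal{O}_V$ \emph{embeds} in $\Delta(V)$, so one can simply apply Proposition~\ref{prop48}\eqref{prop48.3}: any nonzero submodule of $\Delta(V)$ has the same annihilator.

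Second, you need $\mathcal{L}(\Delta(V'),\Delta(V'))\cong\mathcal{L}(\Delta(V),\Delta(V))$ for the non-dominant $\Delta(V')$, and you assert this ``follows via translation through the equivalence in \eqref{eq36}.'' That does not work: the equivalence $\mathrm{G}$ in \eqref{eq36} comes from restriction to $\mathfrak{g}_0$ and identifies $\mathcal{O}_V$ with a Lie-algebra block, but it does not directly transfer the $\mathcal{L}$-bimodules, and translation functors on $\mathcal{O}_V$ do not send $\Delta(V)$ to $\Delta(V')$ as a direct summand (same projectivity obstruction as above). What one actually needs is a genuinely new step: the paper works one simple reflection at a time, using the short exact sequence $\Delta(sw)\hookrightarrow\Delta(w)\tto C$ with $C$ being $s$-finite while the top and socle of $\Delta(sw)$ are $s$-infinite, and then invokes the method of \cite[Lemma~11]{KM} to conclude $\mathcal{L}(\Delta(w),\Delta(w))=\mathcal{L}(\Delta(sw),\Delta(sw))$. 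Induction along the Bruhat order finishes the proof. Without this $s$-finiteness argument the inclusion $U/I\hookrightarrow\mathcal{L}(\Delta(V'),\Delta(V'))$ could a priori be proper, so the statement you call ``immediate'' is in fact the heart of the Corollary.
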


\begin{proof}
The first claim follows from Proposition~\ref{prop48}\eqref{prop48.3} 
since all Verma modules in $\mathcal{O}_V$ are submodules of $\Delta(V)$.

To prove the second claim we need some notation. Let $W$ denote the Weyl
group. Then Verma modules in $\mathcal{O}_V$ are naturally indexed
by elements of $W$. For $w\in W$ let $\Delta(w)$ denote the 
corresponding Verma module (with the convention $\Delta(e)=\Delta(V)$).
Since all $\Delta(w)$ have the same annihilator by the first claim,
to prove the second claim it is enough to show that for any simple
reflection $s\in W$ we have
\begin{equation}\label{eq45}
\mathcal{L}(\Delta(w),\Delta(w))=
\mathcal{L}(\Delta(sw),\Delta(sw)).
\end{equation}
Without loss of generality we may assume that 
$\Delta(sw)\hookrightarrow \Delta(w)\tto C$, where $C$ is just the
cokernel. Then $C$ is $s$-finite while the simple top and the
simple socle of $\Delta(sw)$ are $s$-infinite. Using this,
\eqref{eq45} is proved similarly to \cite[Lemma~11]{KM}. The claim follows.
\end{proof}

\subsection{Serre functor for category 
$\mathcal{O}_{\overline{\lambda}}$}\label{s4.8}

Fix one representative in every isomorphism class of indecomposable 
projective objects in $\mathcal{O}^{\mathfrak{p}}_{\overline{\lambda}}$ 
and denote by $\mathcal{C}^{\mathfrak{p}}_{\overline{\lambda}}$  
the full subcategory of $\mathcal{O}^{\mathfrak{p}}_{\overline{\lambda}}$ 
which these fixed objects generate. The category
$\mathcal{C}^{\mathfrak{p}}_{\overline{\lambda}}$ is an slf-category and
$\mathcal{O}^{\mathfrak{p}}_{\overline{\lambda}}$ is equivalent 
to $\mathcal{C}^{\mathfrak{p}}_{\overline{\lambda}}\text{-}\mathrm{mod}$
(note that $\mathcal{C}^{\mathfrak{p}}_{\overline{\lambda}}\cong (\mathcal{C}^{\mathfrak{p}}_{\overline{\lambda}})^{\mathrm{op}}$ 
because of $\star$). Now we are ready to formulate our main result.

\begin{theorem}\label{thm52}
Let $\lambda\in \mathfrak{h}_0^*$ be strongly typical, regular,
dominant and generic. Let further $V$ be a simple
$\mathfrak{h}_0$-module of weight $\lambda$ and assume that 
$\Pi V\not\cong V$. Set $\Delta:=\Delta^{\mathfrak{p}}(V)$. Then we have:
\begin{enumerate}[$($a$)$]
\item\label{thm52.1} For any $P,N\in
\mathcal{O}^{\mathfrak{p}}_{\overline{\lambda}}$ with $P$ projective
there is an isomorphism
\begin{displaymath}
\mathrm{Hom}_{\mathfrak{g}}\big(N,
\mathcal{L}(P,\Delta)^{\circledast}\otimes_{U}
\Delta\big)\cong \mathrm{Hom}_{\mathfrak{g}}(P,N)^*,
\end{displaymath}
natural in both $P$ and $N$.
\item\label{thm52.2} The left derived of the functor
$\mathcal{L}({}_-,\Delta)^{\circledast}\otimes_{U}\Delta$
is a Serre functor on 
$\cP(\mathcal{C}^{\mathfrak{p}}_{\overline{\lambda}})$. 
\end{enumerate}
\end{theorem}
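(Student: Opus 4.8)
\textbf{Proof plan for Theorem~\ref{thm52}.}
The plan is to follow the same strategy as in the proof of Theorem~\ref{thm4}, replacing the inputs from the Lie algebra theory with their superalgebra counterparts established in Section~\ref{s4}. For part~\eqref{thm52.1} I would first record that, by Theorem~\ref{thm21}\eqref{thm21.3}, the triple $(\mathcal{O}^{\mathfrak{p}}_{\overline{\lambda}},\Delta,\mathcal{F})$ is an fpf-category, so it suffices to check the desired isomorphism on objects of the form $P=E\otimes\Delta$ for finite dimensional $\mathfrak{g}$-modules $E$ and then extend by additivity and naturality. The key technical ingredient is the analogue of Proposition~\ref{prop55}, namely a natural isomorphism
\begin{displaymath}
\mathrm{Hom}_{\mathfrak{g}\text{-}\mathfrak{g}}
\big(\mathcal{L}(\Delta,P),\mathcal{L}(\Delta,N)\big)\cong
\mathcal{L}(P,\Delta)\otimes_{U\text{-}U}\mathcal{L}(\Delta,N).
\end{displaymath}
To prove it one reduces to the case $P=\Delta$, where $\mathcal{L}(\Delta,\Delta)\cong U/I$ by Theorem~\ref{thm21}\eqref{thm21.1}; here the left hand side becomes $\mathcal{L}(\Delta,N)^{\mathfrak{g}}$ and the right hand side is the degree-zero Hochschild homology $\mathcal{L}(\Delta,N)/\mathcal{K}\mathcal{L}(\Delta,N)$, and the two are identified exactly as in Lemma~\ref{lem5} once one knows both are finite dimensional of the same dimension. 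The general case $P=E\otimes\Delta$ then follows by combining the tensor identities $\mathcal{L}(E\otimes\Delta,\Delta)\cong\mathcal{L}(\Delta,\Delta)\otimes E^*$ and $\mathcal{L}(\Delta,E\otimes\Delta)\cong E\otimes\mathcal{L}(\Delta,\Delta)$ (the super-analogue of \cite[6.2]{Ja}) with the adjunction isomorphisms, just as in \eqref{eq5} and \eqref{eq6}. With Proposition~\ref{prop55}'s analogue in hand, part~\eqref{thm52.1} is the chain of natural isomorphisms at the end of the proof of Theorem~\ref{thm4}: apply the equivalence \eqref{eq21}, then the bimodule isomorphism, then double dualize (legitimate by finite dimensionality), adjunction, the passage from ${}^*$ to ${}^{\circledast}$, and finally \eqref{eq21} again.

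For part~\eqref{thm52.2} I would argue as for Corollary~\ref{cor7}: part~\eqref{thm52.1} says that $\mathcal{L}({}_-,\Delta)^{\circledast}\otimes_U\Delta$ is right exact and satisfies the defining property of the Nakayama functor on $\mathcal{O}^{\mathfrak{p}}_{\overline{\lambda}}$ (test on the projective generator $\bigoplus E\otimes\Delta$), hence is isomorphic to it. Since all injective modules in $\mathcal{O}^{\mathfrak{p}}$ have finite projective dimension — established in Subsection~\ref{s4.2} by transporting the corresponding fact for $\tilde{\mathcal{O}}$ along the exact induction functor and using \cite{RC,Sc} for the parabolic case — Proposition~\ref{prop1} applies to $\mathcal{C}=\mathcal{C}^{\mathfrak{p}}_{\overline{\lambda}}$ and gives that $\cL\mathrm{N}$ is a Serre functor on $\cP(\mathcal{C}^{\mathfrak{p}}_{\overline{\lambda}})$. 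The left derived of $\mathcal{L}({}_-,\Delta)^{\circledast}\otimes_U\Delta$ is $\cL\mathrm{N}$ by the isomorphism of functors just obtained, so the claim follows.

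\textbf{Main obstacle.} The delicate point is the super-version of Lemma~\ref{lem5}, i.e. the equality of dimensions of the two sides of the Hochschild-homology identity when $P=\Delta$. It uses in an essential way that $\mathcal{L}(\Delta,\Delta)\cong U/I$, which here is precisely Theorem~\ref{thm21}\eqref{thm21.1} and relies on the hypotheses $\Pi V\not\cong V$, $\lambda$ strongly typical, regular, dominant and generic; and it uses the identification of $\mathrm{Hom}_{\mathfrak{g}\text{-}\mathfrak{g}}(U/I,\mathcal{L}(\Delta,N))$ with the $\mathfrak{g}$-invariants of $\mathcal{L}(\Delta,N)$, the super-analogue of \cite[Lemma~2.2]{BG}. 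One must also verify that the non-trivial part $X$ of $\mathcal{L}(\Delta,N)$ as an adjoint $\mathfrak{g}$-module is a sum of non-trivial simples so that $X\subseteq\mathcal{K}\mathcal{L}(\Delta,N)$, which for type $\mathfrak{q}$ requires a little care because the adjoint action is of $\mathfrak{g}_0$ (equivalently $\mathfrak{g}$, as $U(\mathfrak{g})$ is finite over $U(\mathfrak{g}_0)$) and one works with $\mathfrak{g}_0$-bimodules $\mathcal{L}(\mathrm{Res}\,M,\mathrm{Res}\,N)$ throughout. Beyond this, the remaining steps are formal and parallel the Lie algebra case verbatim.
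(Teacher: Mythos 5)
The high-level strategy you outline (establish a super-analogue of Proposition~\ref{prop55}, then run the chain of natural isomorphisms from the end of the proof of Theorem~\ref{thm4}) is the right one and matches what the paper does. But the crucial step — identifying the degree-zero Hochschild homology $\mathcal{L}(\Delta,N)/\mathcal{K}\mathcal{L}(\Delta,N)$ with the invariants — does \emph{not} transfer ``exactly as in Lemma~\ref{lem5}'', and your ``main obstacle'' paragraph, while flagging this spot, misdiagnoses the difficulty. In the Lie algebra proof one decomposes $\mathcal{L}(\Delta,N)=\mathcal{L}(\Delta,N)^{\mathfrak{g}}\oplus X$ with $X$ a direct sum of \emph{nontrivial simple} finite dimensional adjoint $\mathfrak{g}$-modules, which immediately gives $X\subset\mathcal{K}\mathcal{L}(\Delta,N)$. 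That decomposition is available because the category of finite dimensional modules over the semi-simple Lie algebra $\mathfrak{g}$ is semisimple. In the super setting this is false: the adjoint $\mathfrak{g}$-module $\mathcal{L}(\Delta,N)^{\mathrm{ad}}$ is not semisimple, so no such $X$ exists, and there is no reason for a complement of the invariants to lie in $\mathcal{K}\mathcal{L}(\Delta,N)$. Your remark that the adjoint action ``is of $\mathfrak{g}_0$ (equivalently $\mathfrak{g}$...)'' is misleading — the $\mathfrak{g}_0$-adjoint structure is semisimple, but the $\mathfrak{g}$-adjoint structure, which is what controls $\mathcal{K}\mathcal{L}(\Delta,N)$, is not, and the two give genuinely different decompositions.

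The paper's actual argument requires several new ingredients you do not have. First, one reduces in the $N$-variable to $N$ projective (so $N\cong\theta\Delta$), precisely to ensure $\mathcal{L}(\Delta,N)^{\mathrm{ad}}$ is a direct sum of \emph{injective} (not simple) finite dimensional $\mathfrak{g}$-modules. Second, one needs Lemma~\ref{lem91}: the injective hull $I(0)$ of the trivial module has simple top isomorphic to the trivial module, and the trivial module does not occur in the top of any other indecomposable injective. This is where the hypothesis $\Pi V\not\cong V$ is used — it makes $\mathrm{Ind}$ isomorphic to coinduction, so $I(0)$ is a projective-injective summand of $\mathrm{Ind}\,L_{\mathfrak{g}_0}$. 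Third, one takes the $I(0)$-isotypic component $Y$ of $\mathcal{L}(\Delta,N)^{\mathrm{ad}}$, shows that $\mathcal{K}\mathcal{L}(\Delta,N)$ contains both the complement $Y'$ and $\mathrm{Rad}\,Y$, and by comparing dimensions (top and socle multiplicities of $L_{\mathfrak{g}}$ coincide because $I(0)$ has isomorphic top and socle) concludes $\mathcal{K}\mathcal{L}(\Delta,N)=Y'\oplus\mathrm{Rad}\,Y$. Finally, one needs an explicit natural isomorphism $\mathrm{top}\,Y\to\mathrm{soc}\,Y$, which the paper produces by choosing $u\in U(\mathfrak{g})$ annihilating $\mathrm{Rad}\,I(0)$ but not $I(0)$ and acting by $u$. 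None of this is captured by ``a little care'', and the first and second points in particular constitute a real gap: without the reduction to projective $N$ and without Lemma~\ref{lem91} you have no control over the adjoint structure and cannot make the Hochschild computation work. Part~\eqref{thm52.2} of your argument is fine once~\eqref{thm52.1} is secured.
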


\begin{proof}
Using Theorem~\ref{thm21}\eqref{thm21.2} we will prove 
claim \eqref{thm52.1} along the lines of the proof of Theorem~\ref{thm4}.
In fact, we need only to prove an analogue of Proposition~\ref{prop55},
the rest of the proof is identical to that of Theorem~\ref{thm4}. 

Our first observation is that it is enough to prove 
Proposition~\ref{prop55} under the assumption that $N$ is projective.
The case of general $N$ reduces to the case of projective $N$ by taking
the first two steps of the projective resolution (because both sides
of \eqref{eq56} are right exact in $N$). 

If $N$ is projective, then $N\cong \theta\Delta$ for some projective
functor $\theta$. Similarly to Subsection~\ref{s4.6} one shows
that the adjoint $\mathfrak{g}$-module
$\mathcal{L}(\Delta,\theta\Delta)^{\mathrm{ad}}$ is a direct sum
of injective finite dimensional modules. 
Let $L_{\mathfrak{g}}$ and $L_{\mathfrak{g}_0}$ denote the trivial
$\mathfrak{g}$- and $\mathfrak{g}_0$-modules, respectively.
Note that $\mathrm{Res}\,L_{\mathfrak{g}}\cong L_{\mathfrak{g}_0}$.
Let $I(0)$ denote the injective hull of $L_{\mathfrak{g}}$ in the 
category of finite dimensional modules.

\begin{lemma}\label{lem91}
The top of $I(0)$ is isomorphic to the trivial module.
\end{lemma}

\begin{proof}
The assumption  $\Pi V\not\cong V$ is equivalent to the assumption
that $\mathrm{Ind}$ is isomorphic to coinduction. By adjunction we have
\begin{displaymath}
\mathrm{Hom}_{\mathfrak{g}}(
L_{\mathfrak{g}},\mathrm{Ind}\,L_{\mathfrak{g}_0})
\cong
\mathrm{Hom}_{\mathfrak{g}_0}(\mathrm{Res}\,L_{\mathfrak{g}},
L_{\mathfrak{g}_0})\neq 0,
\end{displaymath}
which implies that $I(0)$ is a direct summand  
of $\mathrm{Ind}\,L_{\mathfrak{g}_0}$. Similarly, 
\begin{displaymath}
\mathrm{Hom}_{\mathfrak{g}}(\mathrm{Ind}\,
L_{\mathfrak{g}_0},L_{\mathfrak{g}})
\cong
\mathrm{Hom}_{\mathfrak{g}_0}(L_{\mathfrak{g}_0},
\mathrm{Res}\,L_{\mathfrak{g}})\neq 0
\end{displaymath}
and hence $L_{\mathfrak{g}}$ appears in the top of
$\mathrm{Ind}\,L_{\mathfrak{g}_0}$.

At the same time, we claim that $L_{\mathfrak{g}}$ does not appear 
in the top of any other indecomposable injective module $I$.
Indeed, if $L$ is the simple socle of $I$, then $L\neq L_{\mathfrak{g}}$ 
and hence $L$ has a nontrivial simple $\mathfrak{g}_0$-submodule, say $N_{\mathfrak{g}_0}$. Then, by adjunction, 
\begin{displaymath}
\mathrm{Hom}_{\mathfrak{g}}(L,\mathrm{Ind}\,N_{\mathfrak{g}_0})
\cong
\mathrm{Hom}_{\mathfrak{g}_0}(\mathrm{Res}\,L,N_{\mathfrak{g}_0})\neq 0
\end{displaymath}
and hence $I$ appears as a direct summand in 
$\mathrm{Ind}\,N_{\mathfrak{g}_0}$. 
At the same time, again by adjunction,
\begin{displaymath}
\mathrm{Hom}_{\mathfrak{g}}(\mathrm{Ind}\,
N_{\mathfrak{g}_0},L_{\mathfrak{g}})
\cong
\mathrm{Hom}_{\mathfrak{g}_0}(N_{\mathfrak{g}_0},
\mathrm{Res}\,L_{\mathfrak{g}})=0.
\end{displaymath}
The claim follows.
\end{proof}

In the case $P=\Delta$ we have $\mathcal{L}(\Delta,\Delta)=U/I$
by Theorem~\ref{thm21}\eqref{thm21.1}. It follows that in this
case
\begin{equation}\label{eq92}
\mathrm{Hom}_{\mathfrak{g}\text{-}\mathfrak{g}}
(\mathcal{L}(\Delta,P),\mathcal{L}(\Delta,N))\cong
\mathcal{L}(\Delta,N)^{\mathfrak{g}}_0.
\end{equation}
Similarly to the proof of Lemma~\ref{lem5}, we obtain that 
\begin{equation}\label{eq93}
\mathcal{L}(P,\Delta)
\otimes_{U\text{-}U}\mathcal{L}(\Delta,N)\cong
((\mathcal{L}(\Delta,N)^{\circledast})^{\mathfrak{g}}_0)^*.
\end{equation}
As $\mathcal{L}(\Delta,N)^{\mathrm{ad}}$ is a direct sum of indecomposable
injective modules and $I(0)$ has isomorphic top and socle, the 
multiplicities of $L_{\mathfrak{g}}$ in the top and the socle of 
$\mathcal{L}(\Delta,N)^{\mathrm{ad}}$ coincide. This implies that
the vector spaces in \eqref{eq92} and \eqref{eq93} have the same
dimension, say $k$, giving us an analogue of Lemma~\ref{lem5}.

Let $Y$ be the
isotypic component of $I(0)$ in $\mathcal{L}(\Delta,N)^{\mathrm{ad}}$.
It is easy to see that
$\mathcal{K}\mathcal{L}(\Delta,N)$ contains both the complement $Y'$ of
$Y$ and the radical $\mathrm{Rad}\, Y$ of $Y$. The dimension argument 
implies that $\mathcal{K}\mathcal{L}(\Delta,N)$ coincides with
$Y'\oplus \mathrm{Rad}\, Y$. This means that 
$\mathcal{L}(\Delta,N)/\mathcal{K}\mathcal{L}(\Delta,N)$
is the top of $Y$. Clearly, 
$\mathcal{L}(\Delta,N)^{\mathfrak{g}}_0$ is the socle of
$Y$.
Let $u\in U(\mathfrak{g})$ be any element which annihilates the
radical of $I(0)$ but not $I(0)$. Applying $u$ provides a unique
(up to a nonzero scalar) isomorphism from the top of $Y$ to the socle 
of $Y$. This gives us an analogue of the isomorphism \eqref{eq58}. 
The rest of the proof of Proposition~\ref{prop55} carries over 
analogously.  Claim \eqref{thm52.1} follows.

Claim \eqref{thm52.2} follows directly from \eqref{thm52.1}.
This completes the proof.
\end{proof}

\subsection{Applications}\label{s4.9}

In this subsection we work under the assumptions of Theorem~\ref{thm52}.

\begin{corollary}\label{cor51}
The Nakayama functor on $\mathcal{O}^{\mathfrak{p}}_{\overline{\lambda}}$ 
naturally commutes with projective functors.
\end{corollary}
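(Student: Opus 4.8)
The plan is to transfer the argument from Corollary~\ref{cor8}
to the super setting, using the equivalence from
Theorem~\ref{thm21}\eqref{thm21.2} together with the identification
of the Nakayama functor obtained in Theorem~\ref{thm52}. First I would
invoke Theorem~\ref{thm52}\eqref{thm52.1}, which gives for any
projective $P\in\mathcal{O}^{\mathfrak{p}}_{\overline{\lambda}}$ and
any $N$ a natural isomorphism
$\mathrm{Hom}_{\mathfrak{g}}(N,
\mathcal{L}(P,\Delta)^{\circledast}\otimes_{U}\Delta)\cong
\mathrm{Hom}_{\mathfrak{g}}(P,N)^*$. Since the right-hand side is,
up to the natural duality $\star$ on
$\mathcal{O}^{\mathfrak{p}}_{\overline{\lambda}}$, precisely what
computes the Nakayama functor on the projective generators of
$\mathcal{C}^{\mathfrak{p}}_{\overline{\lambda}}$, the right exact
functor $\mathcal{L}({}_-,\Delta)^{\circledast}\otimes_{U}\Delta$
agrees with the Nakayama functor $\mathrm{N}$ on projectives, hence
(both being right exact) everywhere. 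This is the super analogue of
Corollary~\ref{cor7}\eqref{cor7.1}, and it reduces the claim to
showing that $\mathcal{L}({}_-,\Delta)^{\circledast}\otimes_{U}\Delta$
naturally commutes with projective functors.

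Next I would establish the commutation on the level of the functor
$\mathcal{L}({}_-,\Delta)$ itself. The key input is the super version
of the isomorphism \eqref{eq59}: for a finite dimensional
$\mathfrak{g}$-module $E$ one has a natural isomorphism
$\mathcal{L}(E\otimes\Delta,\Delta)\cong
\mathcal{L}(\Delta,\Delta)\otimes E^*$ (this follows from the same
adjunction manipulations as in \cite[6.2]{Ja}, which go through since
$U(\mathfrak{g})$ is a finite extension of $U(\mathfrak{g}_0)$ and
$\mathcal{L}$ is defined via restriction to $\mathfrak{g}_0$). Applying
the restricted duality ${}_-{}^{\circledast}$ to this isomorphism and
then tensoring with $\Delta$ over $U$ turns the right tensor factor
$E^*$ into a left tensor factor, which shows that
$\mathcal{L}(\theta({}_-),\Delta)^{\circledast}\otimes_{U}\Delta$ is
isomorphic to $\theta'\big(\mathcal{L}({}_-,\Delta)^{\circledast}
\otimes_{U}\Delta\big)$ for the appropriate projective functor
$\theta'$, with the isomorphisms being natural by construction.
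Since every projective functor is a direct summand of a functor
$E\otimes{}_-$, additivity of all functors involved upgrades this to
all of $\mathcal{F}$, and the coherence condition in the definition
of naturally commuting (the commuting square indexed by
$\alpha\in\mathcal{F}(\mathrm{F}_i,\mathrm{F}_j)$) follows from the
functoriality of \eqref{eq59} in $E$.

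The main obstacle, and the only place where genuinely new work over the
Lie algebra case is needed, is checking that the super analogue of
\eqref{eq59} is available in the form required — in particular that
${}_-{}^{\circledast}$ interacts correctly with the twisted dual
module $\check{E}^*$ and with the parity bookkeeping, and that the
relevant adjunctions $(\mathrm{Ind},\mathrm{Res})$ and
$(\mathrm{Res},\Pi^{\dim\mathfrak{g}_1}\circ\mathrm{Ind})$ collapse to
the form used because $\dim\mathfrak{g}_1$ is even under the standing
hypothesis $\Pi V\not\cong V$. Once this is in place the argument is a
verbatim copy of the proof of Corollary~\ref{cor8}: the Nakayama
functor equals $\mathcal{L}({}_-,\Delta)^{\circledast}\otimes_{U}\Delta$
by the first step, the latter commutes with projective functors by the
second step, and naturality follows directly from the definition of
projective functors. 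I would therefore present the proof as: ``By
Theorem~\ref{thm52}\eqref{thm52.1} the Nakayama functor on
$\mathcal{O}^{\mathfrak{p}}_{\overline{\lambda}}$ is isomorphic to
$\mathcal{L}({}_-,\Delta)^{\circledast}\otimes_{U}\Delta$; applying
${}_-{}^{\circledast}$ to the super analogue of the left isomorphism
in \eqref{eq59} shows this functor commutes with projective functors,
and naturality follows from the definition of projective functors,''
exactly parallel to Corollary~\ref{cor8}.
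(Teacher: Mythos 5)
Your proposal is correct and follows essentially the same route as the paper: the paper's proof is simply \emph{mutatis mutandis} Corollary~\ref{cor8}, i.e.\ identify the Nakayama functor with $\mathcal{L}({}_-,\Delta)^{\circledast}\otimes_{U}\Delta$ via Theorem~\ref{thm52}\eqref{thm52.1} and then apply ${}_-{}^{\circledast}$ to the super analogue of the left isomorphism in \eqref{eq59}, which is exactly your argument. (One minor slip: the identification of $\mathcal{L}({}_-,\Delta)^{\circledast}\otimes_{U}\Delta$ with the Nakayama functor on projectives requires no reference to the duality $\star$ — the isomorphism $\mathrm{Hom}_{\mathfrak{g}}(N,\mathrm{N}P)\cong\mathrm{Hom}_{\mathfrak{g}}(P,N)^*$ is precisely the defining property of the Nakayama functor, so the two right-exact functors agree on projectives and hence everywhere.)
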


\begin{proof}
Mutatis mutandis Corollary~\ref{cor8}.
\end{proof}

Consider the endofunctor $\mathrm{C}$ of 
$\mathcal{O}^{\mathfrak{p}}_{\overline{\lambda}}$
of partial coapproximation with respect to projective-injective modules. 

\begin{corollary}\label{cor52}
The Nakayama functor on $\mathcal{O}^{\mathfrak{p}}_{\overline{\lambda}}$ 
is isomorphic to $\mathrm{C}^2$.
\end{corollary}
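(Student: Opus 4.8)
The plan is to follow the strategy used for the Lie algebra case in Corollary~\ref{cor9}, adapting each step to the current superalgebra setting. By Corollary~\ref{cor51}, the Nakayama functor on $\mathcal{O}^{\mathfrak{p}}_{\overline{\lambda}}$ naturally commutes with projective functors, and the functor $\mathrm{C}$ (hence $\mathrm{C}^2$) does so too, by the same verbatim argument as in \cite[Claim~2, Page~153]{MS} — the partial coapproximation construction only uses exactness of projective functors and the fact that they send projective-injectives to projective-injectives, which holds here. Since a functor naturally commuting with projective functors is determined up to isomorphism by its value on the dominant object $\Delta=\Delta^{\mathfrak{p}}(V)$ (Subsection~\ref{s2.3}), it suffices to show that $\mathrm{C}^2\Delta$ is isomorphic to the indecomposable injective module with socle $\Delta^\star$, equivalently to $\Delta^\star$ having isomorphic top and socle, since that is what the Nakayama functor does to $\Delta$.

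The first step is to show $\mathrm{C}\Delta\cong K$, where $K$ is the simple socle of $\Delta$ (as in Subsection~\ref{s4.7}). Since $\Delta$ is projective in $\mathcal{O}^{\mathfrak{p}}_{\overline{\lambda}}$, the module $\mathrm{C}\Delta$ is the trace of the projective-injective modules in $\Delta$. Via the equivalence $\mathrm{G}$ from \eqref{eq36} (which is an equivalence of categories intertwining projective functors, and which identifies $\mathcal{O}_V$ with a direct summand of $\tilde{\mathcal{O}}_{\lambda'}$) this trace is carried to the corresponding trace in $\tilde{\mathcal{O}}_{\lambda'}$, where it is computed by \cite[3.2]{Ma} to be the analogue of $K$; so $\mathrm{C}\Delta\cong K$. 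The second step is to produce a two-step projective-injective resolution $\Delta\hookrightarrow X_0\to X_1$ of $\Delta$. Here I would invoke \cite[Corollary~12]{Ma} transported through the equivalence $\mathrm{G}$: the surjectivity $U/\mathrm{Ann}_U(K)\cong\mathcal{L}(K,K)$ is exactly Proposition~\ref{prop48}\eqref{prop48.1}, and together with $\mathrm{Ann}_U(K)=\mathrm{Ann}_U(\Delta)$ (Proposition~\ref{prop48}\eqref{prop48.2}) this guarantees, on the Lie algebra side of the equivalence, the existence of such a resolution, which pulls back to one for $\Delta$.

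Applying the duality $\star$ to $\Delta\hookrightarrow X_0\to X_1$ gives
\begin{displaymath}
X_1^{\star}\to X_0^{\star}\tto \Delta^{\star},
\end{displaymath}
with $X_0^\star,X_1^\star$ again projective-injective (recall $\star$ preserves projective-injectives in $\mathcal{O}^{\mathfrak{p}}_{\overline{\lambda}}$ — note that for type $\mathfrak{q}$ we may need $\Pi$-twists, but these are harmless). The kernel of the natural surjection $\Delta^\star\tto K^\star$ is a module all of whose composition factors have strictly smaller GK-dimension than $K$, hence receives no nonzero map from a projective-injective module and is killed by $\mathrm{C}$; therefore $\mathrm{C}\Delta^\star\cong\mathrm{C}K$. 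On the other hand, the presentation $X_1^\star\to X_0^\star\tto\Delta^\star$ together with right-exactness of $\mathrm{C}$ and the fact that $\mathrm{C}$ is the identity on projective-injectives yields $\mathrm{C}\Delta^\star\cong\Delta^\star$. Combining, $\mathrm{C}^2\Delta\cong\mathrm{C}K\cong\mathrm{C}\Delta^\star\cong\Delta^\star$, which is precisely the value of the Nakayama functor on $\Delta$, so $\mathrm{C}^2\cong\mathrm{N}$.

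The main obstacle I anticipate is \textbf{transporting the two-step projective-injective resolution and the trace computation through the equivalence $\mathrm{G}$ of \eqref{eq36}}. The results \cite[3.2, Corollary~12]{Ma} are stated for category $\mathcal{O}$ (and its parabolic versions) for semisimple Lie algebras, and one has to check that $\mathrm{G}$ — which is only an equivalence onto a direct summand of $\tilde{\mathcal{O}}_{\lambda'}$ — genuinely preserves the relevant notions (projective-injectivity, GK-dimension of composition factors, the module $K$ as the maximal-GK-dimension subquotient), and that being a direct summand causes no loss. This is where one must also be careful with the possible $\Pi$-shifts in the type $\mathfrak{q}$ case when applying $\star$ and identifying $K^\star\cong K$; fortunately $\Pi$ commutes with everything in sight and the socle/top isomorphism statement is insensitive to it. Everything else is formal manipulation, parallel to the Lie algebra arguments already recorded in Corollaries~\ref{cor8} and~\ref{cor9}.
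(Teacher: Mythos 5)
Your proposal is correct and follows essentially the same route as the paper, whose proof is simply ``Mutatis mutandis Corollary~\ref{cor9}''; you are spelling out the transfer (naturality with projective functors via Corollary~\ref{cor51}, $\mathrm{C}\Delta\cong K$, the two-step projective-injective resolution via Proposition~\ref{prop48}, then applying $\star$), which is exactly the mutatis mutandis the authors have in mind. The obstacle you flag — transporting through $\mathrm{G}$ and handling possible $\Pi$-shifts — is indeed the only place requiring care, and the paper leaves it implicit as well.
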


\begin{proof}
Mutatis mutandis Corollary~\ref{cor9}.
\end{proof}

Fix one representative in every isomorphism class of indecomposable
projective-injective objects in 
$\mathcal{O}^{\mathfrak{p}}_{\overline{\lambda}}$ and denote by 
$\mathcal{P}^{\mathfrak{p}}_{\overline{\lambda}}$ the full 
subcategory of 
$\mathcal{O}^{\mathfrak{p}}_{\overline{\lambda}}$ generated
by these fixed objects. The category
$\mathcal{P}^{\mathfrak{p}}_{\overline{\lambda}}$
is an slf-category.

\begin{corollary}\label{cor54}
The category $\mathcal{P}^{\mathfrak{p}}_{\overline{\lambda}}$
is symmetric.
\end{corollary}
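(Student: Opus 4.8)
The plan is to deduce Corollary~\ref{cor54} from Proposition~\ref{prop2} in exactly the same way that Corollary~\ref{cor11} was deduced from Proposition~\ref{prop2} in the Lie algebra setting, using the superalgebra analogues already established. First I would observe that by construction every projective object in $\mathcal{P}^{\mathfrak{p}}_{\overline{\lambda}}$ is also injective, so the Nakayama functor $\mathrm{N}$ on $\mathcal{O}^{\mathfrak{p}}_{\overline{\lambda}}$ restricts to an endofunctor of the additive category of projective $\mathcal{P}^{\mathfrak{p}}_{\overline{\lambda}}$-modules; consequently $\cL\mathrm{N}$ preserves $\cP(\mathcal{P}^{\mathfrak{p}}_{\overline{\lambda}})$ and, since all injectives in $\mathcal{O}^{\mathfrak{p}}_{\overline{\lambda}}$ have finite projective dimension (established in Subsection~\ref{s4.2}), Proposition~\ref{prop1} applies to give that $\cL\mathrm{N}$ is the Serre functor on $\cP(\mathcal{P}^{\mathfrak{p}}_{\overline{\lambda}})$.

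Next I would invoke Corollary~\ref{cor52}, which identifies the Nakayama functor on $\mathcal{O}^{\mathfrak{p}}_{\overline{\lambda}}$ with $\mathrm{C}^2$, where $\mathrm{C}$ is partial coapproximation with respect to projective-injective modules. The key point is that $\mathrm{C}$ acts as the identity on projective-injective modules: by definition $\mathrm{C}P$ is the submodule of a projective $P$ generated by images of maps from projective-injectives into $P$, and if $P$ is itself projective-injective then the identity map is such a map, so $\mathrm{C}P = P$. Hence $\mathrm{C}^2$, and therefore its left derived functor, restricts to the identity on $\cP(\mathcal{P}^{\mathfrak{p}}_{\overline{\lambda}})$.

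Combining these two facts, the Serre functor on $\cP(\mathcal{P}^{\mathfrak{p}}_{\overline{\lambda}})$ is isomorphic to the identity functor, and Proposition~\ref{prop2} then yields that $\mathcal{P}^{\mathfrak{p}}_{\overline{\lambda}}$ is symmetric. There is essentially no obstacle here beyond bookkeeping: the genuinely hard work has already been done in Theorem~\ref{thm21} (equivalence with Harish-Chandra bimodules), Theorem~\ref{thm52} (the Serre functor description), and Corollary~\ref{cor52} (the $\mathrm{C}^2$ description). The only point requiring a moment's care is verifying that restricting $\mathrm{C}$ to the projective-injective subcategory really does give the identity on morphisms as well as objects, but this is immediate from the fact that $\mathrm{C}$ acts on morphisms by restriction. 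So the proof is simply: \emph{Mutatis mutandis Corollary~\ref{cor11}, using Corollary~\ref{cor52} in place of Corollary~\ref{cor9}.}
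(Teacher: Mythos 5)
Your proposal is correct and matches the paper exactly: the paper's proof of Corollary~\ref{cor54} is literally ``Mutatis mutandis Corollary~\ref{cor11}'', and your expansion spells out precisely the intended substitutions (Corollary~\ref{cor52} for Corollary~\ref{cor9}, the finite-projective-dimension fact from Subsection~\ref{s4.2}, and $\mathrm{C}$ acting as the identity on projective-injectives), then concludes via Proposition~\ref{prop2}.
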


\begin{proof}
Mutatis mutandis Corollary~\ref{cor11}.
\end{proof}

As a special case of Corollary~\ref{cor54} we obtain that the algebra
$\mathcal{P}^{\mathfrak{g}}_{\overline{\lambda}}$ describing
finite dimensional modules in 
$\mathcal{O}_{\overline{\lambda}}$ is symmetric. For the superalgebra
$\mathfrak{gl}(m,n)$ this is proved in \cite{BS}. 
In the case $\Pi V\cong V$ this is no longer true in general
(it is easy to see that the projective cover and the injective hull
of the trivial $\mathfrak{q}(1)$-module are not isomorphic).
Instead we propose the following:

\begin{conjecture}\label{conj55}
{\rm
Under the assumption $\Pi V\cong V$ the functor $\Pi$ is the Serre 
functor on $\cP(\mathcal{P}^{\mathfrak{g}}_{\overline{\lambda}})$.
} 
\end{conjecture}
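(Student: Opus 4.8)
The plan is to deduce the conjecture from the uniqueness of Serre functors, reducing it to a ``$\Pi$-symmetry'' statement about the block algebra and then adapting the Harish--Chandra bimodule machinery of Section~\ref{s3} so as to carry along the parity twist. First I would note that, exactly as in the proof of Theorem~\ref{thm52}\eqref{thm52.2}, the hypotheses of Proposition~\ref{prop1} hold for $\mathcal{C}:=\mathcal{P}^{\mathfrak{g}}_{\overline{\lambda}}$: all injectives in the finite dimensional block $\mathcal{O}^{\mathfrak{g}}_{\overline{\lambda}}$ have finite projective dimension by Subsection~\ref{s4.2}, and every projective $\mathcal{C}$-module is injective (as in the proof of Corollary~\ref{cor54}), so $\cL\mathrm{N}$ is the Serre functor on $\cP(\mathcal{C})$ and is just the termwise Nakayama functor. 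Since $\Pi$ is an exact autoequivalence of $\mathcal{O}^{\mathfrak{g}}_{\overline{\lambda}}$ preserving projective-injective modules, it induces an autoequivalence of $\cP(\mathcal{C})$, and by uniqueness of the Serre functor it suffices to show $\mathrm{N}\cong\Pi$ after restriction to the additive subcategory of projective $\mathcal{C}$-modules. Writing $A$ for the path algebra of $\mathcal{C}^{\mathrm{op}}$, this is the statement that the Nakayama automorphism of the self-injective algebra $A$ agrees, up to inner automorphisms, with the automorphism induced by $\Pi$ --- the natural ``$\Pi$-symmetric'' (or supersymmetric) refinement of Proposition~\ref{prop2}, which also refines the weak symmetry of \cite[Theorem~3.9.1]{BKN}: one must promote an equality of permutations of simples to an equality of functors.

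The object-level part (that the Nakayama permutation of $A$ is the permutation induced by $\Pi$) I would obtain by revisiting Lemma~\ref{lem91}: the assumption $\Pi V\cong V$ forces $\dim\mathfrak{g}_1$ to be odd, so that $\mathrm{Ind}$ is isomorphic not to coinduction but to $\Pi\circ(\text{coinduction})$; retracing the proof of Lemma~\ref{lem91} with this shift shows that for every simple $L$ in $\mathcal{O}^{\mathfrak{g}}_{\overline{\lambda}}$ the injective hull $I(L)$ has top $\Pi L$. For the functor-level statement I would rerun the chain of natural isomorphisms of Theorem~\ref{thm4} with the automorphism $\varphi$ of $U(\mathfrak{g})$ inserted throughout. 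The crucial preliminary input is the analogue of Theorem~\ref{thm21}\eqref{thm21.1}: in the case $\Pi V\cong V$ the algebra $\mathcal{L}(\Delta,\Delta)$ is no longer $U/I$ but the ``doubled'' bimodule $U/I\oplus(U/I)^{\varphi}$ of Conjecture~\ref{conj45}, which must first be established by adapting Gorelik's perfect-mate argument \cite[Section~8]{Go4}, the point being that by \cite[A.3.2]{Go2} the image of $U(\mathfrak{h})$ in $\mathrm{End}_{\mathbb{C}}(V)$ is now only a half-dimensional subalgebra (a square root of the matrix algebra), which produces exactly the extra $\varphi$-twisted summand. Granting this, the equivalence \eqref{eq21} is replaced by an equivalence of $\mathcal{O}^{\mathfrak{p}}_{\overline{\lambda}}$ with Harish--Chandra bimodules over $U/I\oplus(U/I)^{\varphi}$, and rerunning the argument of Theorem~\ref{thm4} --- keeping the bookkeeping that twisting the right $U$-action of a Harish--Chandra bimodule by $\varphi$ corresponds, under ${}_-\otimes_{U}\Delta$, to postcomposition with $\Pi$ --- should pin down the Nakayama functor on $\mathcal{O}^{\mathfrak{p}}_{\overline{\lambda}}$, and hence, after restriction to the projective-injective subcategory where the partial coapproximation functor $\mathrm{C}$ of Corollaries~\ref{cor51}--\ref{cor54} still acts as the identity, identify the Nakayama functor on $\cP(\mathcal{P}^{\mathfrak{g}}_{\overline{\lambda}})$ with $\Pi$.

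The main obstacle is twofold. First, one has to prove Conjecture~\ref{conj45} (or at least Conjecture~\ref{conj44}): Gorelik's technique was tuned to the strongly typical, parity-\emph{unstable} situation, and in the case $\Pi V\cong V$ one must rule out that the ``missing half'' of $\mathcal{L}(\Delta,\Delta)$ is larger than, or twisted differently from, a single $\varphi$-copy of $U/I$. Second, and more delicate, is the bookkeeping identifying the algebra automorphism $\varphi$ of $U(\mathfrak{g})$ with the categorical autoequivalence $\Pi$ of $\mathcal{O}^{\mathfrak{p}}_{\overline{\lambda}}$ through the twisted Harish--Chandra equivalence: this is precisely the reason that, unlike in Section~\ref{s3}, the functor $\Pi$ does \emph{not} literally naturally commute with projective functors in the sense of Subsection~\ref{s2.3} (it does so only up to $\varphi$), so the uniqueness arguments used there have to be replaced by their $\varphi$-twisted counterparts, and all the sign conventions for $\Pi$, $\mathrm{Ind}$ and $\check{E}^{*}\otimes{}_-$ recalled in Subsection~\ref{s4.1} must be reconciled. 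When $\mathcal{O}^{\mathfrak{p}}_{\overline{\lambda}}$ is a direct summand of the usual or parabolic category $\mathcal{O}$, an alternative and possibly more transparent route would be to identify the Serre functor with a ``super'' version of Arkhipov's twisting functors in the spirit of Remark~\ref{rem10}, the parity shift entering through the odd part of $\mathfrak{n}^{-}$.
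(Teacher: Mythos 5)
You should first note that the statement you were asked to prove is explicitly labelled \emph{Conjecture}~\ref{conj55} in the paper: the authors themselves do not prove it, and offer no argument beyond the motivating observation that the $\Pi V\cong V$ case genuinely fails to be symmetric (the $\mathfrak{q}(1)$ example). There is therefore no ``paper's own proof'' to compare your attempt against.

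Taken on its own terms, your proposal is a plausible research plan but not a proof, and you are candid about exactly where it is incomplete. The object-level step --- that under $\Pi V\cong V$ the adjunction $(\mathrm{Res},\Pi\circ\mathrm{Ind})$ twists Lemma~\ref{lem91} so that indecomposable injectives in $\mathcal{O}^{\mathfrak{g}}_{\overline{\lambda}}$ have top $\Pi$ of their socle --- looks sound, and it is corroborated by the paper's $\mathfrak{q}(1)$ remark and the computation of $P(0)$ for $\mathfrak{q}(2)$ in Subsection~\ref{s5.4}, where the projective cover of $L(0)$ has socle $L(0)$ while the top of its radical contains $\Pi L(0)$. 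However, identifying the Nakayama \emph{permutation} with $\Pi$ only establishes a weak symmetry in the sense of \cite[Theorem~3.9.1]{BKN}; it does not give an isomorphism $\mathrm{N}\cong\Pi$ of functors on the projective subcategory, which is what the conjecture asserts via Proposition~\ref{prop2}. Your route to the functor-level statement hinges on Conjecture~\ref{conj45} (at minimum Conjecture~\ref{conj44}), which the paper also leaves open: the entire chain of Section~\ref{s3} (the analogue of Proposition~\ref{prop55}, hence Theorem~\ref{thm4}, hence Corollaries~\ref{cor8}--\ref{cor11}) is deduced in the paper only after establishing $U/I\cong\mathcal{L}(\Delta,\Delta)$, and in the $\Pi V\cong V$ case one has, by \cite[A.3.2]{Go2}, that $U$ covers only a proper subbimodule of $\mathcal{L}(\Delta,\Delta)$. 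Without first proving the decomposition $\mathcal{L}(\Delta,\Delta)\cong U/I\oplus(U/I)^{\varphi}$ and building a $\varphi$-twisted replacement for the equivalence~\eqref{eq21}, the core computations you propose to ``rerun'' do not get started. Your second stated obstacle --- reconciling the algebra automorphism $\varphi$ with the categorical autoequivalence $\Pi$ through the (conjectural) twisted Harish--Chandra equivalence, and with the fact that $\Pi$ naturally commutes with projective functors only up to $\varphi$ --- is also real and currently unresolved. In short: the plan is coherent and in the spirit of the paper, but it conditionally reduces one open conjecture to another, and it does not close the gap.
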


\section{Finite dimensional $\mathfrak{q}(2)$-modules}\label{s5}

\subsection{The result}\label{s5.1}

In this section we use our results to describe all blocks of
the category of  finite dimensional $\mathfrak{q}(2)$-modules.

\begin{theorem}\label{thm61}
Every block of the category of finite dimensional 
$\mathfrak{q}(2)$-modules is equivalent to the category of 
finite dimensional modules over one of the following algebras 
given by quiver and relations:
\begin{enumerate}[(a)]
\item\label{thm61.1} 
\begin{displaymath}
\bullet 
\end{displaymath}
\item\label{thm61.1a} 
\begin{displaymath}
\quad\quad\quad\quad\quad\quad\quad\quad\quad
\xymatrix{ 
\bullet\ar@(dr,ur)[]_{a} 
}\quad\quad\quad\,\,\, a^2=0.
\end{displaymath}
\item\label{thm61.2} 
\begin{displaymath}
\xymatrix{ 
\bullet\ar@/^/[rr]^{a} &&\bullet\ar@/^/[rr]^{a}\ar@/^/[ll]^{b} 
&& \bullet\ar@/^/[rr]^{a}\ar@/^/[ll]^{b}&&\dots\ar@/^/[ll]^{b}
}\quad a^2=b^2=0, ab=ba.
\end{displaymath}
\item\label{thm61.3} 
\begin{displaymath}
\xymatrix{ 
\bullet\ar[rr]^{c}\ar@/^/[d]^{h} &&\bullet\ar@/^/[rr]^{a}\ar[lld]^{d}
&& \bullet\ar@/^/[rr]^{a}\ar@/^/[ll]^{b}&&\dots\ar@/^/[ll]^{b}\\
\bullet\ar[rr]_{c}\ar@/^/[u]^{h} &&\bullet\ar@/^/[rr]^{a}\ar[llu]_{d} 
&& \bullet\ar@/^/[rr]^{a}\ar@/^/[ll]^{b}&&\dots\ar@/^/[ll]^{b}
}
\begin{array}{l}
\\\\a^2=b^2=0, ab=ba;\\chd=ba, dch=hdc;\\h^2=ac=db=cd=0.\\
\end{array}
\end{displaymath}
\end{enumerate}
\end{theorem}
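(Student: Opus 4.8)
\textbf{Proof plan for Theorem~\ref{thm61}.}
The strategy is to go block by block, using the general theory developed above together with the explicit structure of $\mathfrak{q}(2)$. First I would recall the classification of central characters / linkage classes for $\mathfrak{q}(2)$: the even part is $\mathfrak{gl}_2$, weights live in $\mathfrak{h}_0^*\cong\mathbb{C}^2$, and the odd Cartan forces a dichotomy between typical weights (where $\Pi V\not\cong V$, so $\dim\mathfrak{g}_1$-parity is even and Theorem~\ref{thm52} applies) and atypical weights. The trivial block and other $\Pi$-stable atypical blocks must be treated by hand. I would first dispose of the semisimple blocks (case~\eqref{thm61.1}), which are exactly the strongly typical regular blocks whose linkage class of dominant weights is a single point; here $\mathcal{O}^{\mathfrak{g}}_{\overline\lambda}$ has a unique simple, which is projective-injective, giving the one-vertex algebra with no arrows.

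Next, for the infinite quiver cases~\eqref{thm61.2} and~\eqref{thm61.3}, the plan is to compute $\mathcal{P}^{\mathfrak{g}}_{\overline\lambda}=\mathrm{End}$ of the basic projective-injective module, and to pin down its quiver and relations using three inputs: (i) Corollary~\ref{cor54}, which tells us the algebra is \emph{symmetric} — this constrains the Cartan matrix to be symmetric and forces the socle/top structure of indecomposable projectives to match; (ii) the translation principle, i.e. the equivalence \eqref{eq36} with a suitable block $\tilde{\mathcal{O}}_{\lambda'}$ of category $\mathcal{O}$ for $\mathfrak{gl}_2$ (or its parabolic version), which computes dimensions of $\mathrm{Hom}$-spaces between projectives and hence the number of arrows and the dimension of the algebra via \eqref{eq44}; (iii) the known combinatorics of $\mathfrak{q}(2)$ from \cite{PS,Br3,FM}, in particular Kazhdan–Lusztig-type multiplicities in Verma flags of projectives. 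For the typical blocks (case~\eqref{thm61.2}) I expect the block to be Morita equivalent, via Theorem~\ref{thm52} and the equivalence with Harish-Chandra bimodules, to a block of $\mathcal{O}$ for $\mathfrak{sl}_2$ with a parabolic/singular cutoff, whose algebra is the well-known zig-zag algebra $a^2=b^2=0$, $ab=ba$; symmetry of $\mathcal{P}$ is then automatic and confirms the relations. For the atypical non-$\Pi$-stable block (case~\eqref{thm61.3}), which is the genuinely ``super'' phenomenon, I would build the quiver as a $\mathbb{Z}/2$-cover (two copies of the tail of the zig-zag, glued at the ``head'' by the extra arrows $c,d,h$), compute projective covers of the two socle-head simples explicitly from Verma filtrations, and read off the relations $chd=ba$, $dch=hdc$, $h^2=ac=db=cd=0$, finally checking by hand that the resulting algebra is symmetric (consistency with Corollary~\ref{cor54}) and has the right total dimension.

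The bookkeeping is organized so that each relation is forced: the quadratic relations $a^2=b^2=0$, $h^2=0$, $ac=db=cd=0$ come from vanishing of the relevant $\mathrm{Hom}$ or $\mathrm{Ext}$ spaces (equivalently, from the length-$2$ Loewy structure of the corresponding projectives, visible in the translation to $\tilde{\mathcal{O}}$); the commutativity relations $ab=ba$ and $dch=hdc$ and the ``long path'' relation $chd=ba$ are then forced by symmetry of the algebra (the symmetrizing form pairs a path with its reverse, so any path to the socle of a projective is determined up to scalar, and two paths with the same source and target must be proportional, with the scalar normalized to $1$ by rescaling arrows). Case~\eqref{thm61.1a}, the $\mathfrak{q}(1)$-type block $a^2=0$ on one vertex (the projective cover of the trivial module having Loewy length $2$ with equal top and socle), arises for the $\Pi$-stable atypical block where projective and injective hull of the simple coincide but the module is not simple; here I would note that Corollary~\ref{cor54} does \emph{not} apply (since $\Pi V\cong V$), so symmetry must be checked directly, which is immediate for $\Bbbk[a]/(a^2)$.

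\textbf{Main obstacle.} The hard part is case~\eqref{thm61.3}: neither Theorem~\ref{thm52} nor Corollary~\ref{cor54} applies directly since $\Pi V\cong V$ on (part of) the block, so one cannot simply quote symmetry or the Serre-functor computation, and the equivalence \eqref{eq36} with a block of $\tilde{\mathcal{O}}$ is only available on the $\Pi$-non-stable piece. I expect to need a hands-on analysis: decompose the block according to the $\mathbb{Z}/2$-action of $\Pi$, use Theorem~\ref{thm52} on the part where it applies to get the ``tail'' (the zig-zag part with $a,b$), and then glue in the remaining two vertices near the head by explicitly computing $\mathrm{Ind}/\mathrm{Res}$ of the relevant $\mathfrak{gl}_2$-modules and the resulting Verma flags, verifying the relations and the symmetry of the final algebra by direct computation with this small explicit presentation. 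Confirming that the list of four algebras is \emph{exhaustive} — i.e. that every linkage class falls into one of these four combinatorial types — is a finite check once the classification of central characters for $\mathfrak{q}(2)$ is in hand, but it requires care with the boundary/singular linkage classes.
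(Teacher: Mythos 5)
Your broad plan—classify linkage classes, use the symmetry result of Corollary~\ref{cor54}, compute projective covers, and read off the quiver and relations from Loewy structure—does match the paper's strategy, but the proposal has several concrete misidentifications that would derail the execution. First, you label case~\eqref{thm61.2} as "the typical blocks''; it is in fact the \emph{non-integral atypical} block (weights $(k,-k)$ with $k$ a positive half-integer). The typical-but-not-strongly-typical blocks are exactly case~\eqref{thm61.1a}, which the paper dispatches by citing~\cite[6.2]{Fr}, and which you mislabel as "$\Pi$-stable atypical.'' Because case~\eqref{thm61.2} is atypical, your proposed route via Theorem~\ref{thm52} and the Harish-Chandra equivalence to a block of $\mathcal{O}$ for $\mathfrak{sl}_2$ does not apply: the equivalence~\eqref{eq36} with $\tilde{\mathcal{O}}$ is only available on the strongly typical block $\mathcal{O}_V$, not on the atypical linkage classes inside $\mathcal{O}^{\mathfrak{g}}_{\overline\lambda}$. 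The paper instead computes $\mathrm{Ind}\, N(\lambda^k)\cong\bigwedge\mathfrak{g}_1\otimes N(\lambda^k)$ as a $\mathfrak{g}_0$-module, reads off the length and $\mathfrak{g}_0$-composition factors of $P(\lambda^k)$, and then invokes symmetry to force the Loewy diagram.

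Second, your "main obstacle'' paragraph asserts that Corollary~\ref{cor54} does not apply to the principal block because $\Pi V\cong V$ there. This misreads the hypothesis: $V$ in Theorem~\ref{thm52} and Corollary~\ref{cor54} is the simple $\mathfrak{h}$-module of the \emph{chosen strongly typical} $\lambda$, not a simple object of the block under study. For $\mathfrak{q}(2)$ the integral coset $\overline\lambda=\mathbb{Z}^2$ containing the principal block also contains strongly typical, regular, dominant, generic weights, and for $\mathfrak{q}(2)$ every strongly typical $V$ has $\Pi V\not\cong V$; hence $\mathcal{P}^{\mathfrak{g}}_{\overline\lambda}$ is symmetric and this symmetry passes to the principal block as a direct summand. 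The paper uses this explicitly ("Since we work with a symmetric algebra\dots'') to locate $L(\lambda^1)$ and $\Pi L(\lambda^1)$ in $P(0)$. Finally, for the principal block the paper's argument hinges on Lemma~\ref{lem71}, the direct $\mathfrak{g}$-computation showing $\mathrm{Ext}^1_{\mathfrak{g}}(L(0),\Pi L(0))\cong\mathbb{C}$, which is the ingredient that forces the Loewy length of $P(0)$ to be four and hence pins down the square-shaped Loewy filtration; your plan does not identify this step, and without it the "glue by hand'' stage would be underdetermined. Your heuristic that symmetry pins down the relations (paths with the same source and target being proportional after rescaling) is a sound way to argue for $ab=ba$, $chd=ba$, $dch=hdc$ once the Loewy diagrams are established, and is consistent with what the paper does implicitly, but the Loewy diagrams themselves require the induction and $\mathrm{Ext}$ computations above.
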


It is worth emphasizing that the algebra in 
Theorem~\ref{thm61}\eqref{thm61.3} is not quadratic (not even homogeneous) 
and hence not Koszul either (in contrast to algebras from \cite{BS}).
Note that all algebras in Theorem~\ref{thm61} are special
biserial, in particular, they are tame.

\subsection{Notation and the typical case}\label{s5.2}

We fix the standard matrix unit basis 
$\{e_{11},e_{12},e_{21},e_{22},
\overline{e}_{11},\overline{e}_{12},
\overline{e}_{21},\overline{e}_{22}\}$, where the overlined elements
are odd.

Write $\mathfrak{q}(2)$-weights
in the form $\lambda=(\lambda_1,\lambda_2)\in\mathbb{C}^2$ with respect to 
the dual basis of $\mathfrak{h}_0$. 
Simple finite dimensional $\mathfrak{q}(2)$-modules
are determined uniquely up to parity change by their highest weight.
If $\lambda$ is the highest weight of
a simple finite dimensional $\mathfrak{q}(2)$-module $L$, then 
either $\lambda=0$ or $\lambda_1-\lambda_2\in\mathbb{N}$.
The module $L$ satisfies $L\cong \Pi L$ if and only if exactly one of
$\lambda_1$ and $\lambda_2$ equals zero. 
Denote by $\mathcal{W}$ the set of highest weights of
all simple finite dimensional $\mathfrak{q}(2)$-modules.
We will loosely denote these modules by $L(\lambda)$ 
and $\Pi L(\lambda)$ (but $L(0)$ is the trivial module).

The weight $\lambda\in \mathcal{W}$ is atypical
if and only if it is of the form $(k,-k)$ for some 
$k\in\{0,\frac{1}{2},1,\frac{3}{2},2,\dots\}$. The facts that 
strongly typical blocks are semi-simple (and hence described by
Theorem~\ref{thm61}\eqref{thm61.1}) and that other typical blocks 
are described by Theorem~\ref{thm61}\eqref{thm61.1a}
follow from \cite[6.2]{Fr}.

\subsection{The non-integral atypical block}\label{s5.3}

For $k\in\{0,\frac{1}{2},1,\frac{3}{2},2,\dots\}$
set $\lambda^k=(k,-k)$ and
let $N(\lambda^k)$ denote the simple $\mathfrak{g}_0$-module
with highest weight $\lambda^k$. It has dimension $2k+1$.
We denote by $P(\lambda^k)$ the projective cover of $L(\lambda^k)$
(in the category of finite dimensional modules).

Recall from \cite{PS,Ma2} that simple atypical 
$\mathfrak{q}(2)$-modules look as follows: we have the
trivial module $L(0)$, its parity changed $\Pi L(0)$, and for 
every $k\in\{\frac{1}{2},1,\frac{3}{2},2,\dots\}$
the even part of both  $L(\lambda^k)$ and $\Pi L(\lambda^k)$ is 
isomorphic to $N(\lambda^k)$. 
In particular, using adjunction we obtain that the module 
$\mathrm{Ind}\,N(\lambda^0)$ is indecomposable, while 
$\mathrm{Ind}\,N(\lambda^k)\cong P(\lambda^k)\oplus\Pi P(\lambda^k)$
for all $k\neq 0$.

For any $\mathfrak{g}_0$-module $M$ we have
$\mathrm{Ind}\,M\cong \bigwedge\mathfrak{g}_1\otimes M$ as 
$\mathfrak{g}_0$-modules. The $\mathfrak{g}_0$-module
$\bigwedge\mathfrak{g}_1$ is a direct sum of four copies of the
trivial module and four copies of $N(\lambda^1)$. Moreover, the
odd part of  $\bigwedge\mathfrak{g}_1$ is isomorphic to the even part.

Assume now that $k\in\{\frac{1}{2},\frac{3}{2},\frac{5}{2},\dots\}$.
In this case the even part of $P(\lambda^k)$ is isomorphic, as
a $\mathfrak{g}_0$-module, to the module 
\begin{displaymath}
(N(0)\oplus N(\lambda^1))
\otimes N(\lambda^k)\cong
\begin{cases}
N(\lambda^k)\oplus N(\lambda^{k+1})\oplus
N(\lambda^k), & k=\frac{1}{2};\\
N(\lambda^k)\oplus N(\lambda^{k-1})\oplus N(\lambda^{k+1})\oplus
N(\lambda^k), & \text{otherwise}. 
\end{cases}
\end{displaymath}
This implies that $P(\lambda^k)$ has length three if $k=\frac{1}{2}$
and length four otherwise. By Corollary~\ref{cor54}, 
$P(\lambda^k)$ has isomorphic top and socle, which are thus 
isomorphic to $L(\lambda^k)$. The other composition factor of
$P(\lambda^{\frac{1}{2}})$ has highest weight $\lambda^\frac{3}{2}$.
For $k\neq \frac{1}{2}$, the other two composition factors of
$P(\lambda^{k})$ have highest weight $\lambda^{k+1}$
and $\lambda^{k-1}$. Interchanging $L(\lambda^{k})$ and 
$\Pi L(\lambda^{k})$ for some $k$, if necessary, we may assume that 
all composition factors of $P(\lambda^{k})$ have the form
$L(\lambda^{k})$ or $L(\lambda^{k\pm 1})$.  This means that we
have the following Loewy filtrations of projective modules:
\begin{displaymath}
\xymatrix{P(\lambda^{\frac{1}{2}})\\
L(\lambda^{\frac{1}{2}})\ar@{-}[d]\\
L(\lambda^{\frac{3}{2}})\ar@{-}[d]\\L(\lambda^{\frac{1}{2}})
}\quad \quad\quad \quad \quad \quad  
\xymatrix{&P(\lambda^{k})&\\&L(\lambda^{k})\ar@{-}[dl]\ar@{-}[dr]&\\
L(\lambda^{k-1})\ar@{-}[dr]&&L(\lambda^{k+1})\ar@{-}[dl]\\&L(\lambda^{k})&
}\quad \quad 
\end{displaymath}
It is now easy to see that the full Serre 
subcategory generated by $L(\lambda^{k})$,
$k\in\{\frac{1}{2},\frac{3}{2},\frac{5}{2},\dots\}$, forms a block
which is equivalent to the category of finite dimensional modules 
over the algebra given in Theorem~\ref{thm61}\eqref{thm61.2}.

\subsection{The principal block}\label{s5.4}

Similarly to the previous subsection one shows that for 
$k\in\{2,3,4,\dots\}$ the projective module $P(\lambda^{k})$
has length four, isomorphic simple top and socle, and two 
other composition factors which may be chosen to be isomorphic to
$L(\lambda^{k-1})$ and $L(\lambda^{k+1})$. So we need only to
determine the structure of $P(0)$ and $P(\lambda^{1})$.

A similar argument for $P(\lambda^{1})$ shows that it has length
five with top and socle isomorphic to $L(\lambda^{1})$ and
the other three composition factors isomorphic to 
$L(0)$, $\Pi L(0)$ and $L(\lambda^{2})$.

By a character argument, the module $P(0)$ has length six with top 
and socle isomorphic to  $L(0)$, two other composition factors isomorphic to 
$\Pi L(0)$, and the two remaining composition factors having highest
weight $\lambda^1$. Since $P(\lambda^{1})$ contains both
$L(0)$ and  $\Pi L(0)$, the module $\Pi P(\lambda^{1})$
contains both $L(0)$ and $\Pi L(0)$ as well. This implies that 
both $L(\lambda^{1})$ and $\Pi L(\lambda^{1})$ must appear in the
injective hull of $L(0)$. Since we work with a symmetric algebra,
it follows that $P(0)$ contains both $L(\lambda^{1})$ and 
$\Pi L(\lambda^{1})$, that is we now know the composition factors
of both $P(0)$ and $P(\lambda^{1})$.

\begin{lemma}\label{lem71}
We have $\mathrm{Ext}^1_{\mathfrak{g}}(L(0),\Pi L(0))\cong \mathbb{C}$. 
\end{lemma}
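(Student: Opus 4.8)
The plan is to compute this $\mathrm{Ext}^1$ by realizing it inside the projective module $P(\lambda^1)$. First I would use the structural information already established: $P(\lambda^1)$ has length five, with simple top and socle isomorphic to $L(\lambda^1)$, and with $L(0)$, $\Pi L(0)$ and $L(\lambda^2)$ as the three ``middle'' composition factors. Since the block is described by a symmetric (hence self-injective) algebra by Corollary~\ref{cor54}, $P(\lambda^1)$ is also the injective hull of $L(\lambda^1)$, so its Loewy structure is constrained: the radical modulo socle $\mathrm{Rad}\,P(\lambda^1)/\mathrm{Soc}\,P(\lambda^1)$ is a semisimple module with composition factors $L(0)$, $\Pi L(0)$, $L(\lambda^2)$ (that it is semisimple follows because any non-split self-extension among these would force extra composition factors, which the character count forbids). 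Thus $\mathrm{Rad}\,P(\lambda^1)/\mathrm{Soc}\,P(\lambda^1)\cong L(0)\oplus\Pi L(0)\oplus L(\lambda^2)$, and in particular $L(0)$ and $\Pi L(0)$ each occur in $\mathrm{Rad}\,P(\lambda^1)/\mathrm{Rad}^2\,P(\lambda^1)$ and in $\mathrm{Soc}^2\,P(\lambda^1)/\mathrm{Soc}\,P(\lambda^1)$.

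Next I would extract the desired extension. The submodule of $P(\lambda^1)$ generated by the $\Pi L(0)$-part of the second Loewy layer (together with the socle $L(\lambda^1)$) and quotiented appropriately does not directly give an extension of $L(0)$ by $\Pi L(0)$; instead one should work with $L(0)$ and $\Pi L(0)$ inside $\mathrm{Rad}\,P(\lambda^1)$. The key point is that $L(\lambda^1)$ has exactly one self-extension-free neighborhood and the $\Delta$-flag combinatorics (via the equivalence with a parabolic category $\mathcal{O}$ from Theorem~\ref{thm21}, applied in the strongly typical setting, or by the explicit module structure coming from \cite{PS,Ma2}) pins down which pairs of simples are linked. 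Concretely, I expect that $L(0)$ and $\Pi L(0)$ are \emph{not} linked to each other inside $\mathrm{Rad}\,P(\lambda^1)$ at the level of the second layer but \emph{are} linked one layer up: restricting to the subquotient of $P(0)$ lying between its socle $L(0)$ and the two copies of $\Pi L(0)$ in its radical — recall $P(0)$ has length six with factors $L(0)^2,\Pi L(0)^2,L(\lambda^1)^2$ — produces a uniserial piece $[\,\Pi L(0); L(\lambda^1); \dots\,]$, and dually a quotient exhibiting $\Pi L(0)$ as an extension of or by $L(0)$.

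The cleanest route, and the one I would ultimately follow, is the adjunction/induction computation in the spirit of the proof of Lemma~\ref{lem91}: since $\Pi V\cong V$ here is \emph{not} the situation (we are in the principal block where $L(0)\cong\Pi L(0)$ fails but $L(\lambda^1)\not\cong\Pi L(\lambda^1)$ — rather, $L(0)$ satisfies $L(0)\not\cong\Pi L(0)$ yet exactly one weight-entry vanishes), one has $\mathrm{Ind}\cong\mathrm{Coind}$ up to parity on the relevant modules, so $\mathrm{Ext}^1_{\mathfrak{g}}(L(0),\Pi L(0))$ can be bounded above by $\mathrm{Ext}^1_{\mathfrak{g}_0}(\mathrm{Res}\,L(0),\mathrm{Res}\,\Pi L(0))=\mathrm{Ext}^1_{\mathfrak{g}_0}(N(0),N(0))$ together with the odd part; since $\mathfrak{g}_0$ is reductive this even $\mathrm{Ext}^1$ vanishes, and the contribution comes entirely from $\mathfrak{g}_1$-cohomology, i.e. from $H^1(\mathfrak{g}_1;\mathbb{C})$-type terms, which is one-dimensional. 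For the lower bound (non-vanishing) I would exhibit the non-split extension explicitly as a subquotient of $\mathrm{Ind}\,N(0)=\bigwedge\mathfrak{g}_1\otimes N(0)$, using that $\bigwedge\mathfrak{g}_1$ contains the trivial $\mathfrak{g}_0$-module with multiplicity four and that the $\mathfrak{g}$-action mixes the two parities nontrivially (this is where $\dim\mathfrak{g}_1$ being even and the explicit bracket on $\mathfrak{q}(2)$ enter).

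\textbf{Main obstacle.} The hard part will be the precise bookkeeping of \emph{parities}: both $\mathrm{Hom}$ and $\mathrm{Ext}^1$ split into a degree-zero and a degree-one piece, and one must be careful that $\mathrm{Ext}^1_{\mathfrak{g}}(L(0),\Pi L(0))$ picks up exactly the ``parity-reversing'' direction and not a spurious contribution from $\mathrm{Ext}^1_{\mathfrak{g}}(L(0),L(0))$ (which should vanish). Controlling this, and confirming that the single odd extension class coming from $\mathfrak{g}_1$ survives to a genuine $\mathfrak{g}$-extension rather than dying in a spectral sequence differential, is the crux; everything else is a consequence of the length and self-injectivity facts already in hand.
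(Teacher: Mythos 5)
Your proposal never arrives at a proof; it offers three half-developed strategies and explicitly leaves the "crux" of the third one unresolved. The paper's argument is far more elementary and avoids all the machinery you invoke: an extension $0\to\Pi L(0)\to E\to L(0)\to 0$ is a two-dimensional module concentrated in $\mathfrak{h}_0$-weight zero, so $e_{12},e_{21},\overline{e}_{12},\overline{e}_{21}$ must act by zero (they shift weight). The only freedom is the action of the odd weight-zero generators, giving scalars $\alpha,\beta$ for $\overline{e}_{11},\overline{e}_{22}$. The relation $[\overline{e}_{12},e_{21}]=\overline{e}_{11}-\overline{e}_{22}$ forces $\alpha=\beta$, and any such pair does define a (nontrivial when nonzero) module. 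Hence $\mathrm{Ext}^1\cong\mathbb{C}$, with no reference to $P(0)$, $P(\lambda^1)$, self-injectivity, induction, or spectral sequences.

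Your Loewy-theoretic strands have a more serious problem than incompleteness: they risk circularity. The quantity to be computed is the multiplicity of $\Pi L(0)$ in $\mathrm{Rad}\,P(0)/\mathrm{Rad}^2 P(0)$, but at the point in Section~\ref{s5.4} where Lemma~\ref{lem71} is stated, only the composition multiplicities of $P(0)$ are known (two copies each of $L(0)$ and $\Pi L(0)$, one each of $L(\lambda^1)$ and $\Pi L(\lambda^1)$); the Loewy length and layer structure of $P(0)$ are deduced in the paper \emph{from} Lemma~\ref{lem71}, not the other way round. Inspecting $P(\lambda^1)$ instead, as you also suggest, only controls $\mathrm{Ext}^1$ between $L(\lambda^1)$ and the middle factors, not $\mathrm{Ext}^1(L(0),\Pi L(0))$ — you notice this yourself and pivot away. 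The induction/spectral-sequence sketch is too vague to assess (e.g. ``$H^1(\mathfrak{g}_1;\mathbb{C})$'' is not a meaningful object, since $\mathfrak{g}_1$ is a super vector space, not a Lie algebra; what you presumably mean is $\mathfrak{g}_0$-invariants in a Koszul-type complex built from $\mathfrak{g}_1$, which requires care to set up), and the parenthetical about $L(0)$ having ``exactly one weight-entry vanish'' is incorrect — both entries of $\lambda^0=(0,0)$ vanish, which is precisely why $L(0)\not\cong\Pi L(0)$. The moral is that this lemma is decided by structure constants alone, and that is the route to take.
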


\begin{proof}
The elements $e_{12},e_{21},\overline{e}_{12}$ and $\overline{e}_{21}$ 
obviously annihilate any extension of $L(0)$ by $\Pi L(0)$.
An extension of $L(0)$ by $\Pi L(0)$ is thus given by specifying two 
scalars $\alpha$ and $\beta$ which represent the action of 
$\overline{e}_{11}$ and $\overline{e}_{22}$, respectively.
These scalars must satisfy $\alpha-\beta=0$ as
$[\overline{e}_{12},e_{21}]=\overline{e}_{11}-\overline{e}_{22}$.
On the other hand, it is straightforward to verify that
any $\alpha$ and $\beta$ satisfying $\alpha-\beta=0$ give rise to
a nontrivial extension of $L(0)$ by $\Pi L(0)$. This proves the lemma.
\end{proof}

From Lemma~\ref{lem71} it follows that $P(0)$ has Loewy length at
least four for otherwise the top of $\mathrm{Rad}\,P(0)$ would contain two
copies of $\Pi L(0)$. Therefore the top  of $\mathrm{Rad}\,P(0)$
has at most two composition factors, one of which must be isomorphic to
$\Pi L(0)$. The parity change argument implies that the top of
$\mathrm{Rad}\,P(0)$ must contain more than $\Pi L(0)$. Without loss of
generality we may assume that it contains $L(\lambda^1)$.
Since we already know that $P(\lambda^1)$ does not contain any
$\Pi L(\lambda^1)$ and that $L(0)$ does not extend $L(0)$, it follows that
$P(0)$ has the composition structure as shown on the left part of
the following picture:
\begin{displaymath}
\xymatrix{&P(0)&\\
&L(0)\ar@{-}[dr]\ar@{-}[dl]&\\
\Pi L(0)\ar@{-}[d]&&L(\lambda^1)\ar@{-}[d]\\
\Pi L(\lambda^1)\ar@{-}[dr]&&\Pi L(0)\ar@{-}[dl]\\
&L(0)&
}\quad \quad\quad \quad \quad \quad  
\xymatrix{&P(\lambda^{1})&\\
&L(\lambda^{1})\ar@{-}[dl]\ar@{-}[dr]&\\
\Pi L(0)\ar@{-}[d]&&L(\lambda^{2})\ar@{-}[ddl]\\
L(0)\ar@{-}[dr]&&\\
&L(\lambda^{1})&\\
}\quad \quad 
\end{displaymath}
We see that the image of the unique up to scalar nonzero map from 
$P(\lambda^{1})$ to $P(0)$ contains an extension of 
$\Pi L(0)$ by $L(0)$. This implies that the composition structure
of $P(\lambda^{1})$ is as shown on the right hand side of the
above picture. Now it is easy to see that the full Serre
subcategory generated by $L(\lambda^{k})$ and $\Pi L(\lambda^{k})$,
$k\in\{0,1,2,\dots\}$ forms a block
which is equivalent the category of finite dimensional modules 
over the algebra given in Theorem~\ref{thm61}\eqref{thm61.3}.
Since this is the block containing the trivial module, it is called
the {\em principal block}.

\vspace{1cm}

\noindent
Volodymyr Mazorchuk, Department of Mathematics, Uppsala University,
Box 480, 751 06, Uppsala, SWEDEN, {\tt mazor\symbol{64}math.uu.se};
http://www.math.uu.se/$\tilde{\hspace{1mm}}$mazor/.
\vspace{0.1cm}

\noindent
Vanessa Miemietz, School of Mathematics, University of East Anglia,
Norwich, UK, NR4 7TJ, {\tt v.miemietz\symbol{64}uea.ac.uk};
http://www.uea.ac.uk/$\tilde{\hspace{1mm}}$byr09xgu/.

\end{document}